\title[Comparaison des cycles proches par un morphisme sans pente]{théorème de comparaison pour les cycles proches par un morphisme sans pente}
\author{Matthieu Kochersperger}
\address{UMR 7640 du CNRS, Centre de Mathématiques Laurent Schwartz, École polytechnique,
F–91128 Palaiseau cedex, France}
\email{matthieu.kochersperger@polytechnique.edu}
\subjclass{32S40}
\keywords{Monodromie, cycles proches, modules multispécialisables, morphismes sans pente, $V$-multifiltration, théorème de comparaison}
\begin{abstract}
Le but de cet article est de démontrer le théorème de comparaison entre les cycles proches algébriques et topologiques associés à un morphisme sans pente. Nous obtenons en particulier que dans le cas d'une famille de fonctions holomorphes sans pente, l'itération des isomorphismes de comparaison des cycles proches associés à chacune de ces fonctions ne dépend pas de l'ordre d'itération.
\end{abstract}
\begin{document}

\date{}

\maketitle

\let\tilde\widetilde
\let\hat\widehat
\let\bar\overline

\theoremstyle{plain}
\newtheorem{theorem}{Théorème}[section]
\newtheorem{lemma}[theorem]{Lemme}
\newtheorem{proposition}[theorem]{Proposition}
\newtheorem{corollary}[theorem]{Corollaire}
\newtheorem{preuve}{\textit{Preuve} :}
\theoremstyle{definition}
\newtheorem{ccl}[theorem]{Conclusion}
\newtheorem{de}[theorem]{Définition}
\newtheorem{expl}[theorem]{Exemple}
\newtheorem{Rem}[theorem]{Remarque}

\def\restriction#1#2{\mathchoice
              {\setbox1\hbox{${\displaystyle #1}_{\scriptstyle #2}$}
              \restrictionaux{#1}{#2}}
              {\setbox1\hbox{${\textstyle #1}_{\scriptstyle #2}$}
              \restrictionaux{#1}{#2}}
              {\setbox1\hbox{${\scriptstyle #1}_{\scriptscriptstyle #2}$}
              \restrictionaux{#1}{#2}}
              {\setbox1\hbox{${\scriptscriptstyle #1}_{\scriptscriptstyle #2}$}
              \restrictionaux{#1}{#2}}}
\def\restrictionaux#1#2{{#1\,\smash{\vrule height .8\ht1 depth .85\dp1}}_{\,#2}}

\tableofcontents

\section{Introduction}

\subsection{Théorème de comparaison pour une fonction}

Soit $X$ une variété analytique complexe et $f:X\to\mathbb{C}$ une fonction holomorphe. Soit $(\mathcal{F},\mathcal{M})$ la donnée d'un faisceau pervers sur $X$ et d'un $\mathcal{D}_X$-module holonome régulier associés par la correspondance de Riemann-Hilbert, c'est-à-dire $\mathcal{F}=\mathbf{DR}_X(\mathcal{M})$. Le foncteur \emph{cycles proches topologiques} $\Psi_f$ de P. Deligne associe à $\mathcal{F}$ un faisceau pervers à support $f^{-1}(0)$ muni d'un automorphisme de monodromie. Prolongeant une construction de B. Malgrange \cite{M2}, M. Kashiwara   introduit dans \cite{Kashiwara83} le foncteur \emph{cycles proches algébriques} $\Psi_f^{\text{alg}}$ (voir aussi \cite{MM}) qui associe à $\mathcal{M}$ un $\mathcal{D}_X$-module holonome régulier à support $f^{-1}(0)$ muni d'un automorphisme de monodromie. Ces deux foncteurs sont reliés par un isomorphisme de comparaison qui commute à la monodromie:
\begin{equation}
\Psi_f(\mathcal{F}) \simeq \mathbf{DR}_{X}\Psi_{f}^{\text{alg}}(\mathcal{M}).\label{thcomp}\end{equation}

\subsection{Théorème de comparaison pour plusieurs fonctions}

Soit maintenant $p\geq 2$ et $f_1,...,f_p$ des fonctions holomorphes sur $X$. Notons $\boldsymbol{f}=(f_1,...,f_p):X\to\mathbb{C}^p$ le morphisme associé. En général, les foncteurs $\Psi_{f_i}$ $(i = 1, . . . , p)$ ne commutent pas
entre eux, de même que les foncteurs $\Psi^{\text{alg}}_{f_i}$.

Dans \cite{Maisonobe} Ph. Maisonobe montre que sous la condition \emph{sans pente} pour le couple $(\boldsymbol{f},\text{car}(\mathcal{F}))$ on peut définir les foncteurs cycles proches topologiques et algébriques associés à $\boldsymbol{f}$. Il montre alors l'existence d'isomorphismes
\[\Psi_{\boldsymbol{f}}\mathcal{F}\simeq \Psi_{f_{\sigma(1)}}...\Psi_{f_{\sigma(p)}}\mathcal{F}\]
et
\[\Psi^{\text{alg}}_{\boldsymbol{f}}\mathcal{M}\simeq \Psi^{\text{alg}}_{f_{\sigma(1)}}...\Psi^{\text{alg}}_{f_{\sigma(p)}}\mathcal{M}\]
pour toute permutation $\sigma$ de $\{1,...,p\}$.
Ceci assure la commutativité des foncteurs cycles proches associés aux fonctions $f_i$ pour $1\leq i\leq p$. Dans l'introduction Ph. Maisonobe mentionne que, par itération de l'isomorphisme \eqref{thcomp}, ses résultats permettent d'obtenir pour tout $\sigma$ des isomorphismes de comparaison
\begin{equation}\label{compité}
\Psi_{\boldsymbol{f}}\mathcal{F}\simeq\Psi_{f_{\sigma(1)}}...\Psi_{f_{\sigma(p)}}(\mathcal{F}) 
\simeq \mathbf{DR}_{X}\Psi_{f_{\sigma(1)}}^{\text{alg}}...\Psi_{f_{\sigma(p)}}^{\text{alg}}(\mathcal{M})\simeq \mathbf{DR}_{X}\Psi^{\text{alg}}_{\boldsymbol{f}}\mathcal{M}.
\end{equation}

Dans cet article, nous montrerons (corollaire \ref{thcomparaison2}) que cet isomorphisme ne dépend pas de la permutation $\sigma$. Pour ce faire, nous exhibons un morphisme de comparaison entre $\Psi_{\boldsymbol{f}}\mathcal{F}$ et $\mathbf{DR}_{X}\Psi^{\text{alg}}_{\boldsymbol{f}}\mathcal{M}$ et nous montrons qu'il coïncide avec les isomorphismes de comparaison itérés \eqref{compité} pour toute permutation $\sigma$.

\subsection{Un exemples de morphisme sans pente}

On appelle singularité quasi-ordinaire un germe d'espace analytique réduit admettant une projection finie sur $\mathbb{C}^p$ dont le lieu de ramification est contenu dans un diviseur à croisements normaux. Si $S$ est une hypersurface de $\mathbb{C}^n$ à singularité quasi-ordinaire définie par une fonction holomorphe $f$, il existe une projection $\pi:\mathbb{C}^n\to\mathbb{C}^{n-1}$ quasi-ordinaire pour $S$. Le faisceau $\Psi_f\underline{\mathbb{C}}_{\mathbb{C}^n}$ est pervers et dans cette situation le couple $(\pi,\text{car}(\Psi_f\underline{\mathbb{C}}_{\mathbb{C}^n}))$ est sans pente.

Les singularités quasi-ordinaires apparaissent en particulier dans la méthode de Jung de résolution des surfaces singulières (voir \cite{Lipman}).

\subsection*{Remerciements}

Cet article a été écrit dans le cadre de ma thèse sous la direction de Claude Sabbah que je remercie vivement pour ses nombreux conseils durant l'élaboration de ce travail. Je remercie Philippe Maisonobe pour l'intérêt qu'il a porté à ce travail. Je remercie également le rapporteur pour ses remarques constructives.

\section{$V$-multifiltration canonique et foncteurs cycles proches}

Dans cette section on définit les cycles proches algébriques à l'aide de la $V$-multifiltration canonique d'un $\mathcal{D}_X$-module sans pente. On démontre des propriétés de cette multifiltration ainsi que de ses gradués. On définit ensuite les cycles proches topologiques associés à plusieurs fonctions. Enfin on introduit les fonctions de classe de Nilsson à plusieurs variables et on en montre des propriétés utilisées dans la section suivante pour établir un lien entre cycles proches algébriques et cycles proches topologiques.

\subsection{$V$-multifiltration canonique d'un $\mathcal{D}_X$-module sans pente}

On notera dans la suite
\begin{itemize}

\item $d_x:=\dim_\mathbb{C} X$
\item $\partial_{i}  :=  \partial_{t_i}$
\item $E_i  :=  t_i\partial_{i}$
\item $\boldsymbol{x}:=(x_1,...,x_{d_X-p})$
\item $\boldsymbol{1}_i:=(0,...,0,1,0,...,0)$ où le $1$ est en position $i$.
\item $\boldsymbol{\alpha}:=(\alpha_1,...,\alpha_p)$
\item $\boldsymbol{\alpha}_{I}:=(\alpha_i)_{i\in I}$ pour $I\subset \{1,...,p\}$
\item $\boldsymbol{t}:=t_1...t_p$
\item $\boldsymbol{t}^{{\boldsymbol{s}}}:=t_1^{s_1}...t_p^{s_p}$
\item $\mathcal{D}_X[{\boldsymbol{s}}]:=\mathcal{D}_X[s_1,...,s_p]$
\item $\boldsymbol{H}=\{H_1,...,H_p\}$ où les $H_i$ sont des hypersurfaces lisses dont la réunion définit un diviseur à croisements normaux. On se place ici dans le cas où il existe localement des coordonnées $(\boldsymbol{x},t_1,...,t_p)$ telles que
\[\begin{array}{cccc}
     \boldsymbol{f}:& X & \to & \mathbb{C}^p\\
       & (\boldsymbol{x},t_1,...,t_p) & \mapsto & (t_1,...,t_p)
     \end{array}\] 
et $H_i=f_i^{-1}(0)$. 

\end{itemize}

\begin{de}

Notons, pour tout $1\leq i\leq p$, $\mathcal{I}_i$ l'idéal de l'hypersurface $H_i$ et $\mathcal{I}^{\boldsymbol{k}}:=\prod_{i=1}^p\mathcal{I}^{k_i}_i$. Pour tout $\boldsymbol{k}\in\mathbb{Z}^p$ et pour tout $x\in X$ on définit:
\[\left(V_{\boldsymbol{k}}\mathcal{D}_X
\right)_x:=\{P\in \mathcal{D}_{X,x} ~|~ \forall\boldsymbol{m}\in\mathbb{Z}^p, P(\mathcal{I}^{\boldsymbol{k}+\boldsymbol{m}}_x)\subset \mathcal{I}^{\boldsymbol{k}+\boldsymbol{m}}_x\},
\]
ceci permet de définir une filtration croissante de $\mathcal{D}_X$ indexée par $\mathbb{Z}^p$.

Soit $\mathcal{M}$ un $\mathcal{D}_X$-module cohérent. Une $V$-\emph{multifiltration} $U_\bullet\mathcal{M}$ de $\mathcal{M}$ est une filtration croissante indexée par $\mathbb{Z}^p$ satisfaisant à $V_{\boldsymbol{k}}\mathcal{D}_X \cdot U_{\boldsymbol{k'}}\mathcal{M}\subset U_{\boldsymbol{k}+\boldsymbol{k'}}\mathcal{M}$ pour tout $\boldsymbol{k}$ et $\boldsymbol{k'}$ dans $\mathbb{Z}^p$. Une telle $V$-multifiltration est \emph{bonne} si elle est engendrée localement par un nombre fini de sections $(m_j)_{j\in J}$, c'est-à-dire que pour tout $j\in J$ il existe $\boldsymbol{k}_j\in\mathbb{Z}^p$ tel que pour tout $\boldsymbol{k}\in\mathbb{Z}^p$
\[U_{\boldsymbol{k}}\mathcal{M}=\sum_{j\in J}V_{\boldsymbol{k}+\boldsymbol{k}_j}\mathcal{D}_X \cdot m_j.\]

\end{de}

Lorsque des inégalités entre nombres complexes apparaîtront, l'ordre considéré sera toujours l'ordre lexicographique sur $\mathbb{C}$, c'est-à-dire
\[x+iy\leq a+ib \iff x<a ~\text{ou}~ (x=a ~\text{et}~ y\leq b).\]

En suivant \cite{Maisonobe} on commence par donner les conditions pour qu'un couple $(\boldsymbol{H},\mathcal{M})$ soit sans pente puis on définit la $V$-multifiltration de Malgrange-Kashiwara.

\begin{de}\label{defmultispe}
Soit $\mathcal{M}$ un $\mathcal{D}_X$-module cohérent.
\begin{enumerate}
\item On dit que le couple $(\boldsymbol{H},\mathcal{M})$ est \emph{multispécialisable sans pente} si au voisinage de tout point de $X$, il existe une bonne $V$-multifiltration $U_\bullet(\mathcal{M})$ de $\mathcal{M}$ et des polynômes $b_i(s)\in\mathbb{C}[s]$ pour tout $1\leq i\leq p$ tels que pour tout ${\boldsymbol{k}}\in\mathbb{Z}^p$, $b_i(E_i+k_i)U_{{\boldsymbol{k}}}\mathcal{M}\subset U_{\boldsymbol{k-1}_i}\mathcal{M}$.
\item On dit que le couple $(\boldsymbol{H},\mathcal{M})$ est \emph{multispécialisable sans pente par section} si, pour toute section locale $m$ de $\mathcal{M}$, il existe des polynômes $b_i(s)\in\mathbb{C}[s]$ pour tout $1\leq i\leq p$ tels que $b_i(E_i)m\in V_{\boldsymbol{-1}_i}\mathcal{D}_X\cdot m$.
\end{enumerate}
\end{de}

Rappelons la proposition 1 de \cite{Maisonobe}:

\begin{proposition}\label{propmult1}
Les deux définitions précédentes sont équivalentes et si la première est satisfaite pour une bonne $V$-multifiltration de $\mathcal{M}$, elle l'est pour toute. On dit alors que le couple $(\boldsymbol{H},\mathcal{M})$ est \emph{sans pente}.
\end{proposition}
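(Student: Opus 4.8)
L'idée est de ramener les deux assertions à un \emph{lemme de commutation} entre les opérateurs d'Euler $E_i$ et la $V$-multifiltration de $\mathcal{D}_X$, puis à la comparabilité de deux bonnes $V$-multifiltrations d'un même $\mathcal{D}_X$-module cohérent. Le lemme de commutation s'énonce ainsi : pour tout $P\in V_{\boldsymbol{k}}\mathcal{D}_X$ on a
\[ E_i\,P \equiv P\,(E_i - k_i) \pmod{V_{\boldsymbol{k}-\boldsymbol{1}_i}\mathcal{D}_X}. \]
On le vérifie sur les monômes en $t_j,\partial_j$ à l'aide de $[E_i,t_i]=t_i$ et $[E_i,\partial_i]=-\partial_i$, le terme d'erreur étant de $V$-degré strictement inférieur en direction $i$. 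Par récurrence sur le degré, on en déduit pour tout polynôme $b_i$ que $b_i(E_i+k_i)P \equiv P\,b_i(E_i) \pmod{V_{\boldsymbol{k}-\boldsymbol{1}_i}\mathcal{D}_X}$.

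Montrons d'abord $(2)\Rightarrow(1)$. Localement $\mathcal{M}$ est engendré sur $\mathcal{D}_X$ par un nombre fini de sections $m_1,\dots,m_r$, et $U_{\boldsymbol{k}}\mathcal{M}:=\sum_j V_{\boldsymbol{k}}\mathcal{D}_X\cdot m_j$ est une bonne $V$-multifiltration de $\mathcal{M}$. En posant $b_i:=\prod_j b_{i,j}$, où $b_{i,j}$ est fourni par $(2)$ pour la section $m_j$, on a $b_i(E_i)m_j\in V_{-\boldsymbol{1}_i}\mathcal{D}_X\cdot m_j$ pour tout $j$ (car $V_{\boldsymbol{0}}\mathcal{D}_X\cdot V_{-\boldsymbol{1}_i}\mathcal{D}_X\subset V_{-\boldsymbol{1}_i}\mathcal{D}_X$). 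Pour $P\in V_{\boldsymbol{k}}\mathcal{D}_X$, le lemme de commutation donne $b_i(E_i+k_i)Pm_j = P\,b_i(E_i)m_j + Q\,m_j$ avec $Q\in V_{\boldsymbol{k}-\boldsymbol{1}_i}\mathcal{D}_X$ ; les deux termes appartiennent à $V_{\boldsymbol{k}-\boldsymbol{1}_i}\mathcal{D}_X\cdot m_j\subset U_{\boldsymbol{k}-\boldsymbol{1}_i}\mathcal{M}$, d'où $(1)$.

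Pour $(1)\Rightarrow(2)$ et l'indépendance vis-à-vis de la bonne $V$-multifiltration, on fixe une section $m$ et on pose $\mathcal{N}=\mathcal{D}_X\cdot m$. Si $(1)$ vaut pour $U_\bullet$, elle vaut encore pour la filtration induite $U_\bullet\cap\mathcal{N}$ (bonne, par noethérianité de $\mathrm{gr}^V\mathcal{D}_X$), puisque $E_i$ préserve le sous-module $\mathcal{N}$. Comme $V_\bullet\mathcal{D}_X\cdot m$ est une autre bonne $V$-multifiltration de $\mathcal{N}$, il suffit de transférer $(1)$ de $U_\bullet\cap\mathcal{N}$ à $V_\bullet\mathcal{D}_X\cdot m$ : en effet, $(1)$ pour cette dernière, prise en $\boldsymbol{k}=\boldsymbol{0}$ et $P=1$, s'écrit $b_i(E_i)m\in V_{-\boldsymbol{1}_i}\mathcal{D}_X\cdot m$, soit exactement $(2)$. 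Ce transfert repose sur la comparabilité des deux bonnes $V$-multifiltrations de $\mathcal{N}$ et sur l'itération de l'équation fonctionnelle en direction $i$.

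La principale difficulté réside dans ce transfert, qui contient d'ailleurs l'énoncé d'indépendance lui-même. L'itération de l'équation fonctionnelle en direction $i$ fait intervenir une constante de comparaison $\boldsymbol{c}\in\mathbb{Z}^p$ dont les composantes portent sur toutes les directions, et il faut éviter que le contrôle gagné en direction $i$ ne soit perdu dans les directions $j\neq i$. C'est précisément ici qu'intervient l'hypothèse \emph{sans pente} : elle permet un dévissage ramenant l'étude de la multifiltration à celle, classique, de la spécialisabilité en une seule variable le long de chaque $H_i$, les $t_j$ $(j\neq i)$ et $\boldsymbol{x}$ jouant le rôle de paramètres ; la théorie à une variable fournit alors à la fois l'équation fonctionnelle cherchée et l'indépendance vis-à-vis de la bonne $V$-multifiltration choisie.
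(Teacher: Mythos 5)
The paper gives no proof of this statement: it is quoted as Proposition 1 of \cite{Maisonobe}, so there is nothing internal to compare against and your proposal must be judged on its own merits. Its first two thirds are fine: the commutation lemma ($E_iP\equiv P(E_i-k_i)$ modulo $V_{\boldsymbol{k}-\boldsymbol{1}_i}\mathcal{D}_X$ for $P\in V_{\boldsymbol{k}}\mathcal{D}_X$) is correct in the normal-crossing setting in which the paper works, and your argument for $(2)\Rightarrow(1)$ --- local generators, the product $b_i=\prod_j b_{i,j}$, and the commutation lemma --- is complete and is the expected one.

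The genuine gap is in $(1)\Rightarrow(2)$ together with the independence statement. You correctly reduce both to a ``transfer'' between two good $V$-multifiltrations of $\mathcal{N}=\mathcal{D}_X\cdot m$, and you correctly identify the obstruction: iterating the functional equation in direction $i$ and using comparability $U_{\boldsymbol{k}}\cap\mathcal{N}\subset V_{\boldsymbol{k}+\boldsymbol{c}}\mathcal{D}_X\cdot m$ only lands in $V_{\boldsymbol{c}-N\boldsymbol{1}_i}\mathcal{D}_X\cdot m$, whose components in the directions $j\neq i$ stay positive. But your proposed resolution --- invoking ``l'hypothèse sans pente'' to perform a dévissage to classical one-variable specializability along each $H_i$, the other variables being parameters --- cannot close this gap. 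First, it is circular in flavor: ``sans pente'' is the property that the equivalence is meant to define, and inside $(1)\Rightarrow(2)$ the only available hypothesis is $(1)$; you never say how $(1)$ actually produces the dévissage. Second, and decisively, one-variable theory along $H_i$ can only yield functional equations of the form $b_i(E_i)m\in V^{H_i}_{-1}\mathcal{D}_X\cdot m$, where $V^{H_i}_{-1}\mathcal{D}_X$ imposes no condition in the other directions (it contains all the $\partial_j$, $j\neq i$); this is strictly weaker than the required $b_i(E_i)m\in V_{-\boldsymbol{1}_i}\mathcal{D}_X\cdot m$. The difference between these two conditions is exactly what separates pairs that are merely specializable along each $H_i$ separately (e.g.\ any holonomic $\mathcal{M}$) from pairs that are sans pente, so no argument that reduces to the one-variable theory alone can prove $(1)\Rightarrow(2)$: the multi-graded control in the directions $j\neq i$ is the whole content of the proposition, and it is precisely what your sketch leaves unproven. (A secondary imprecision: goodness of the induced filtration $U_\bullet\cap\mathcal{N}$ requires a $\mathbb{Z}^p$-graded Artin--Rees argument, i.e.\ noetherianity of the multi-Rees ring $\bigoplus_{\boldsymbol{k}}V_{\boldsymbol{k}}\mathcal{D}_X$, not merely of $\textup{gr}^V\mathcal{D}_X$.)
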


On fixe $\mathcal{M}$ un $\mathcal{D}_X$-module cohérent tel que le couple $(\boldsymbol{H},\mathcal{M})$ soit sans pente.

\begin{de}

Le polynôme unitaire de plus bas degré vérifiant la propriété 1. de la définition  pour l'indice $i$ est appelé \emph{polynôme de Bernstein-Sato d'indice $i$ de la $V$-multifiltration $U_\bullet(\mathcal{M})$}, on le note $b_{i,U_\bullet(\mathcal{M})}$.

Le polynôme unitaire de plus bas degré vérifiant la propriété 2. de la définition  pour l'indice $i$ est appelé \emph{polynôme de Bernstein-Sato d'indice $i$ de la section $m$}, on le note $b_{i,m}$.

\end{de}

\begin{proposition}\label{propmult2}
 
Soient, pour $1\leq i\leq p$, des sections $\sigma_i: \mathbb{C}/\mathbb{Z}\to \mathbb{C}$ de la projection naturelle $\mathbb{C}\to\mathbb{C}/\mathbb{Z}$. Il existe une unique bonne $V$-multifiltration $V^\sigma_\bullet(\mathcal{M})$ de $\mathcal{M}$ telle que pour tout $i$ les racines de $b_{i,V^\sigma_\bullet(\mathcal{M})}$ soient dans l'image de $\sigma_i$.

\end{proposition}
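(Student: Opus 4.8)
Le plan est de construire $V^\sigma_\bullet(\mathcal{M})$ localement en modifiant une bonne $V$-multifiltration arbitraire, puis de recoller les constructions locales grâce à l'unicité. Je commencerais donc par établir l'unicité, ensuite l'existence au voisinage de chaque point, et enfin le recollement.

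\emph{Unicité.} Supposons que $V_\bullet$ et $V'_\bullet$ soient deux bonnes $V$-multifiltrations vérifiant toutes deux la condition sur les racines. Le point clé est que l'image de chaque $\sigma_i$ rencontre exactement une fois chaque classe de $\mathbb{C}/\mathbb{Z}$ : la condition « les racines de $b_{i,\bullet}$ sont dans l'image de $\sigma_i$ » prescrit donc rigidement, sur chaque quotient $V_{\boldsymbol{k}}/V_{\boldsymbol{k}-\boldsymbol{1}_i}$, les valeurs propres généralisées de $E_i+k_i$ qui peuvent apparaître. En comparant $V_\bullet$ et $V'_\bullet$ au moyen de la filtration intermédiaire $\boldsymbol{k}\mapsto V_{\boldsymbol{k}}\cap V'_{\boldsymbol{k}}$ (dont on vérifie qu'elle reste une bonne $V$-multifiltration), toute différence entre les deux forcerait $E_i+k_i$ à admettre, sur un gradué convenable, une valeur propre congrue modulo $\mathbb{Z}$ à un élément de l'image de $\sigma_i$ sans lui être égale, ce qui est exclu. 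Je rendrais ceci précis par une récurrence sur le multidegré utilisant les relations de Bernstein.

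\emph{Existence.} On part d'une bonne $V$-multifiltration $U_\bullet$, qui existe au voisinage de chaque point puisque $(\boldsymbol{H},\mathcal{M})$ est sans pente. La proposition \ref{propmult1} fournit, pour chaque $i$, les polynômes de Bernstein $b_i$; les quotients $U_{\boldsymbol{k}}/U_{\boldsymbol{k}-\boldsymbol{1}_i}$ sont donc munis d'une décomposition en espaces propres généralisés pour $E_i+k_i$, et seul un nombre fini de valeurs propres modulo $\mathbb{Z}$ apparaît. Je modifierais alors $U_\bullet$ indice par indice : pour $i$ fixé, on redistribue les composantes propres de $E_i+k_i$ entre les crans de la filtration dans la direction $\boldsymbol{1}_i$, en utilisant $b_i(E_i+k_i)$ ainsi que $t_i\in V_{-\boldsymbol{1}_i}\mathcal{D}_X$ et $\partial_i\in V_{\boldsymbol{1}_i}\mathcal{D}_X$ pour déplacer d'un cran vers le bas ou vers le haut une composante propre généralisée, de sorte qu'après un nombre fini d'opérations les racines de $b_{i,\bullet}$ tombent dans l'image de $\sigma_i$. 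C'est la version à une variable de la construction canonique de Malgrange-Kashiwara, appliquée successivement à chaque direction.

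J'attends que la difficulté principale soit précisément la compatibilité entre les $p$ directions : il faut s'assurer que placer les racines du polynôme de Bernstein d'indice $i$ dans l'image de $\sigma_i$ ne détruit pas la condition déjà obtenue pour un indice $j\neq i$. C'est là qu'intervient de façon cruciale l'hypothèse sans pente : elle garantit que les décompositions en espaces propres généralisés de $E_1,\dots,E_p$ raffinent un même multi-gradué des gradués, de sorte que les déplacements élémentaires dans la direction $i$ n'agissent que sur la $i$-ème coordonnée de valeur propre et laissent les autres inchangées. Les modifications dans les différentes directions commutent alors, et l'itération sur $i=1,\dots,p$ produit une bonne $V$-multifiltration vérifiant toutes les conditions voulues; il restera à contrôler qu'à chaque étape la bonté de la multifiltration est préservée. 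Une fois la construction locale obtenue, l'unicité permet de recoller les $V^\sigma_\bullet(\mathcal{M})$ définis sur les divers ouverts en une $V$-multifiltration globale.
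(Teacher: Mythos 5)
Votre plan est pour l'essentiel la démonstration du papier : celui-ci ne la rédige pas mais renvoie au théorème 1 de \cite{Maisonobe}, dont l'argument est précisément cette adaptation multivariable de la construction de Malgrange--Kashiwara --- existence locale en modifiant, direction par direction, une bonne $V$-multifiltration arbitraire à l'aide des relations de Bernstein, unicité fondée sur la coprimalité des polynômes de Bernstein translatés qu'impose le fait que chaque $\sigma_i$ est une section de $\mathbb{C}\to\mathbb{C}/\mathbb{Z}$, puis globalisation grâce à l'unicité. Vos deux seuls écarts sont inessentiels : la filtration intersection $V_{\boldsymbol{k}}\cap V'_{\boldsymbol{k}}$ peut être évitée (l'argument usuel compare directement $V_\bullet$ et $V'_\bullet$ à translation près et conclut par une identité de Bézout sur les gradués), et la compatibilité entre directions que vous redoutez résulte simplement de la commutation des $E_i$ --- la modification dans la direction $i$ préserve automatiquement la relation de Bernstein d'indice $j\neq i$ avec le même polynôme --- sans qu'il faille invoquer davantage l'hypothèse sans pente.
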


La démonstration de cette proposition et de la proposition \ref{propmult3} est identique à celle du théorème 1. de \cite{Maisonobe}.

\begin{de}

On définit la multifiltration $V_\bullet(\mathcal{M})$ indexée par $\mathbb{C}^p$ et vérifiant:
\[\forall x\in X,~ V_{\boldsymbol{\alpha}}(\mathcal{M})_x:=\{m\in\mathcal{M}_x;~ s_i\geq -\alpha_i-1,~\forall s_i\in b_{i,m}^{-1}(0) ~\textrm{et}~ 1\leq i\leq p\}.\]
Cette $V$-multifiltration est appelée \emph{$V$-multifiltration canonique} ou \emph{$V$-multifiltration de Malgrange-Kashiwara}.

Si on considère l'ordre partiel standard sur $\mathbb{C}^p$
\[\boldsymbol{\alpha}\leq \boldsymbol{\beta} \iff \alpha_i \leq \beta_i ~\text{pour tout}~ 1\leq i\leq p
\]
 on peut définir
\[V_{<\boldsymbol{\alpha}}(\mathcal{M}):=\sum_{\boldsymbol{\beta}<\boldsymbol{\alpha}}V_{\boldsymbol{\beta}}(\mathcal{M})\]
et
\[\textup{gr}_{\boldsymbol{\alpha}}(\mathcal{M}):=V_{\boldsymbol{\alpha}}(\mathcal{M})/V_{<\boldsymbol{\alpha}}(\mathcal{M}).\]
Soit $I\subset \{1,...,p\}$ et $I^c$ son complémentaire, on définit
\[V_{<\boldsymbol{\alpha}_I,\boldsymbol{\alpha}_{I^c}}(\mathcal{M}):=\sum_{\boldsymbol{\beta}_I<\boldsymbol{\alpha}_I}V_{\boldsymbol{\beta}_I,\boldsymbol{\alpha}_{I^c}}(\mathcal{M}).\]

\end{de}

\begin{proposition}\label{propmult3}

On a l'égalité des $V$-multifiltrations $V_{(<\boldsymbol{\alpha}_I,\boldsymbol{\alpha}_{I^c})+{\boldsymbol{k}}}(\mathcal{M})=V^{\sigma_{<\boldsymbol{\alpha}_I,\boldsymbol{\alpha}_{I^c}}}_{{\boldsymbol{k}}}(\mathcal{M})$ où $\sigma_{<\boldsymbol{\alpha}_I,\boldsymbol{\alpha}_{I^c}}$ est la section dont l'image est l'ensemble

\[\left\{\begin{array}{llcc}
\boldsymbol{a}\in\mathbb{C}^p & \textrm{tel que} & -\alpha_i-1\leq a_i<-\alpha_i ~\forall~ i\in I^c \\
  & et & -\alpha_i-1< a_i\leq-\alpha_i ~\forall~ i\in I
\end{array}\right\}.\]
Il existe un ensemble fini $A\subset [-1,0[^p$ tel que la $V$-multifiltration canonique soit indexée par $A+\mathbb{Z}^p$.
Ainsi la $V$-multifiltration canonique est cohérente.
\end{proposition}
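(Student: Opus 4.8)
The approach is to derive all three assertions from the uniqueness in Proposition~\ref{propmult2}, adapting verbatim the coordinate-by-coordinate argument used by Maisonobe for le théorème~1 de \cite{Maisonobe}. Fix $\boldsymbol{\alpha}\in\mathbb{C}^p$ and $I\subset\{1,\dots,p\}$, and consider the assignment $\boldsymbol{k}\mapsto V_{(<\boldsymbol{\alpha}_I,\boldsymbol{\alpha}_{I^c})+\boldsymbol{k}}(\mathcal{M})$ for $\boldsymbol{k}\in\mathbb{Z}^p$. First I would verify that this is a $V$-multifiltration: the inclusion $V_{\boldsymbol{k}}\mathcal{D}_X\cdot V_{(<\boldsymbol{\alpha}_I,\boldsymbol{\alpha}_{I^c})+\boldsymbol{k'}}(\mathcal{M})\subset V_{(<\boldsymbol{\alpha}_I,\boldsymbol{\alpha}_{I^c})+\boldsymbol{k}+\boldsymbol{k'}}(\mathcal{M})$ follows from the definition of the canonical filtration through the Bernstein--Sato polynomials $b_{i,m}$ of sections, using that $(\boldsymbol{H},\mathcal{M})$ is \emph{sans pente} (Proposition~\ref{propmult1}); here one checks that the mixed, partially strict index $(<\boldsymbol{\alpha}_I,\boldsymbol{\alpha}_{I^c})$ is stable under integer translation because the sum defining $V_{<\boldsymbol{\alpha}_I,\boldsymbol{\alpha}_{I^c}}(\mathcal{M})$ commutes with shifting by $\boldsymbol{k}\in\mathbb{Z}^p$.

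Next comes the crux: identifying the polynomials $b_{i,V^\sigma_\bullet(\mathcal{M})}$ of this $\mathbb{Z}^p$-indexed filtration and showing their roots lie in the image of $\sigma:=\sigma_{<\boldsymbol{\alpha}_I,\boldsymbol{\alpha}_{I^c}}$. The half-open intervals defining this image---closed on the right ($-\alpha_i-1<a_i\leq-\alpha_i$) for $i\in I$ and closed on the left ($-\alpha_i-1\leq a_i<-\alpha_i$) for $i\in I^c$---encode exactly the difference between the strict filtration used in the $I$-directions and the ordinary one used in the $I^c$-directions, via the change of variable $a_i\leftrightarrow -s_i-1$ relating a root $s_i$ of $b_{i,m}$ to an index. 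Once this root condition is established coordinate by coordinate, the uniqueness of Proposition~\ref{propmult2} forces the equality $V_{(<\boldsymbol{\alpha}_I,\boldsymbol{\alpha}_{I^c})+\boldsymbol{k}}(\mathcal{M})=V^{\sigma}_{\boldsymbol{k}}(\mathcal{M})$, and in particular shows that the filtration above is good.

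For the remaining assertions, the finite set $A\subset[-1,0[^p$ is obtained by collecting representatives modulo $\mathbb{Z}^p$, taken in $[-1,0[^p$, of the indices at which the canonical filtration jumps; finiteness follows from the local existence of finitely many generators $m_j$ together with the finitely many roots of each $b_{i,m_j}$, which confines the jumps to $A+\mathbb{Z}^p$. Coherence is then immediate from the first part: each $V_{\boldsymbol{\alpha}}(\mathcal{M})$ coincides with a term $V^{\sigma}_{\boldsymbol{k}}(\mathcal{M})$ of a good---hence $V_{\boldsymbol{0}}\mathcal{D}_X$-coherent---multifiltration provided by Proposition~\ref{propmult2}.

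The main obstacle I anticipate is precisely the root localization of the second paragraph, namely matching the strict/non-strict boundary behaviour of the mixed index $(<\boldsymbol{\alpha}_I,\boldsymbol{\alpha}_{I^c})$ with the open/closed endpoints of the image of $\sigma$. Since by hypothesis the argument is identical to that of Maisonobe, I would transport his inductive treatment of the $p$ coordinates one at a time, using that the operators $E_i$ act on the graded pieces $\textup{gr}_{\boldsymbol{\alpha}}(\mathcal{M})$ with the eigenvalue normalisation dictated by $b_{i,m}$, and recombining the one-variable statements through the product structure of the multifiltration.
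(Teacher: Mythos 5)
Your plan has a genuine circularity at its crux. The uniqueness in Proposition~\ref{propmult2} is uniqueness among \emph{good} $V$-multifiltrations, and the paper's definition of the Bernstein--Sato polynomial $b_{i,U_\bullet(\mathcal{M})}$ of a filtration is itself only given for good filtrations. Your candidate filtration $\boldsymbol{k}\mapsto V_{(<\boldsymbol{\alpha}_I,\boldsymbol{\alpha}_{I^c})+\boldsymbol{k}}(\mathcal{M})$ is defined section by section, and at the point where you want to invoke uniqueness nothing guarantees it is good --- goodness (equivalently the coherence asserted in the last sentence of the proposition) is precisely one of the things to be proved. You obtain goodness only \emph{after} the equality, ``in particular''; but the equality was obtained from a uniqueness statement that requires goodness as a hypothesis. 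The restriction to good filtrations in Proposition~\ref{propmult2} is not cosmetic: for $p=1$ and $\mathcal{M}=\mathcal{O}_X$, the constant filtration $U_k=\mathcal{O}_X$ for all $k\in\mathbb{Z}$ is an exhaustive $V$-filtration satisfying $1\cdot U_k\subset U_{k-1}$, so its ``Bernstein polynomial'' is the constant $1$ and the root condition holds vacuously, yet $U_\bullet$ is not good and differs from the canonical $V$-filtration. Moreover, for a section-wise defined filtration the filtration-level polynomials whose roots you propose to localize need not even exist (an lcm of the $b_{i,m}$ over infinitely many sections need not be a polynomial), so the ``crux'' step of your second paragraph is not well-posed before finiteness is known.

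The argument the paper points to (Maisonobe's théorème~1) runs in the opposite direction and avoids this trap: one uses only the \emph{existence} half of Proposition~\ref{propmult2} to produce the good filtration $V^{\sigma}_\bullet(\mathcal{M})$ for $\sigma=\sigma_{<\boldsymbol{\alpha}_I,\boldsymbol{\alpha}_{I^c}}$, and then proves $V^{\sigma}_{\boldsymbol{k}}(\mathcal{M})=V_{(<\boldsymbol{\alpha}_I,\boldsymbol{\alpha}_{I^c})+\boldsymbol{k}}(\mathcal{M})$ by double inclusion at the level of sections. For one inclusion, a section $m$ of $V^{\sigma}_{\boldsymbol{k}}$ has $b_{i,m}$ dividing a finite product of shifts $\prod_j b_{i,V^{\sigma}_\bullet(\mathcal{M})}(s+j)$ (this uses goodness of $V^{\sigma}$, via comparison of good filtrations), hence its roots lie in the prescribed half-open region. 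For the other, given $m$ satisfying the section-wise root condition, goodness of $V^{\sigma}$ yields a minimal $\boldsymbol{k}'$ with $m\in V^{\sigma}_{\boldsymbol{k}'}$; if $k'_i>k_i$ for some $i$, the class of $m$ in the corresponding graded piece is killed both by $b_{i,m}(E_i)$ and by a power of the minimal polynomial of $E_i$ acting there, and these two polynomials are coprime because their roots lie in disjoint $\mathbb{Z}$-translates of the fundamental domain, so the class vanishes and $\boldsymbol{k}'$ can be lowered --- a contradiction. Only after this identification do the goodness of the canonical multifiltration, the finite set $A\subset[-1,0[^p$, and the coherence follow; your last paragraph is fine once the equality is in hand, but it cannot be used to feed the uniqueness step that precedes it.
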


Soit $I\subset\{1,...,p\}$ et $J\subset I^c$. Comme pour les $\mathcal{D}_X$-modules cohérents, on a une notion de $V_{\mathbf{0}_I}^{\mathbf{H}_I}\mathcal{D}_X$-module multispécialisable sans pente le long des hypersurfaces $\mathbf{H}_{J}:=(H_i)_{i\in J}$.

\begin{de}
Soit $\mathcal{M}$ un $V_{\mathbf{0}_I}^{\mathbf{H}_I}\mathcal{D}_X$-module cohérent et $J\subset I^c$, on note $q:=\# J$.
\begin{enumerate}
\item On dit que le couple $(\boldsymbol{H}_{J},\mathcal{M})$ est \emph{multispécialisable sans pente} (ou \emph{spécialisable} si $q=1$) si au voisinage de tout point de $X$, il existe une bonne $V$-multifiltration $U_\bullet(\mathcal{M})$ de $\mathcal{M}$ et des polynômes $b_i(s)\in\mathbb{C}[s]$ pour tout $i\in J$ tels que pour tout ${\boldsymbol{k}}\in\mathbb{Z}^{q}$, $b_i(E_i+k_i)U_{{\boldsymbol{k}}}\mathcal{M}\subset U_{\boldsymbol{k-1}_i}\mathcal{M}$.
\item On dit que le couple $(\boldsymbol{H}_{J},\mathcal{M})$ est \emph{multispécialisable sans pente par section} (ou \emph{spécialisable par section} si $q=1$) si, pour toute section locale $m$ de $\mathcal{M}$, il existe des polynômes $b_i(s)\in\mathbb{C}[s]$ pour tout $i\in J$ tels que $b_i(E_i)m\in V_{-\mathbf{1}_i}^{\mathbf{H}_{J}}(V_{\mathbf{0}_I}^{\mathbf{H}_I}\mathcal{D}_X)\cdot m=V_{\boldsymbol{-1}_i}\mathcal{D}_X\cdot m$.

\end{enumerate}
\end{de}

\begin{Rem}

Comme pour les $\mathcal{D}_X$-modules (proposition \ref{propmult1}) les deux définitions sont équivalentes et si elle sont satisfaites on dira que le couple $(\boldsymbol{H}_{J},\mathcal{M})$ est sans pente (ou spécialisable si $q=1$). Les analogues des propositions \ref{propmult2} et \ref{propmult3} sont vraies pour les $V_{\mathbf{0}_I}^{\mathbf{H}_I}\mathcal{D}_X$-modules sans pente. 

\end{Rem}

\begin{proposition}\label{compomult}

Soit $I\subset \{1,...,p\}$ et $\mathcal{M}$ un $\mathcal{D}_X$-module cohérent tel que le couple $(\boldsymbol{H},\mathcal{M})$ soit sans pente. Alors le couple $(\boldsymbol{H}_I,\mathcal{M})$ est sans pente et pour tout $\boldsymbol{\alpha}_{I}$ le couple $(\boldsymbol{H}_{I^c},V_{\boldsymbol{\alpha_I}}^{\mathbf{H}_{I}}\mathcal{M})$ est sans pente. De plus, pour $I,J\subset\{1,...,p\}$ disjoints, les $V$-multifiltrations de Malgrange-Kashiwara satisfont à:
\begin{equation}\label{VfiltVfilt}
V_{\boldsymbol{\alpha}_I,\boldsymbol{\alpha}_J}^{\boldsymbol{H}_I\cup\boldsymbol{H}_J}(\mathcal{M})=V_{\boldsymbol{\alpha}_I}^{\boldsymbol{H}_I}(\mathcal{M})\cap V_{\boldsymbol{\alpha}_J}^{\boldsymbol{H}_J}(\mathcal{M})=V_{\boldsymbol{\alpha}_I}^{\boldsymbol{H}_I}\left( V_{\boldsymbol{\alpha}_J}^{\boldsymbol{H}_J}(\mathcal{M})\right).\end{equation}

\end{proposition}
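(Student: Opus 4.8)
The plan is to derive everything from the "par section" criterion of Proposition~\ref{propmult1} (and its analogue for $V^{\boldsymbol{H}_I}_{\boldsymbol{0}_I}\mathcal{D}_X$-modules), together with two elementary facts about the $V$-multifiltrations of the ring $\mathcal{D}_X$ itself, both immediate from the defining condition on the ideals $\mathcal{I}^{\boldsymbol{k}}$ since the components of a normal crossing divisor are independent: for collections $C\subset C'$ and $i\in C$ one has $V^{\boldsymbol{H}_{C'}}_{-\boldsymbol{1}_i}\mathcal{D}_X\subset V^{\boldsymbol{H}_{C}}_{-\boldsymbol{1}_i}\mathcal{D}_X$ (enlarging the collection only adds constraints), and for disjoint $I,J$ the iterated filtration of the ring equals the simultaneous one, $V^{\boldsymbol{H}_I}_{\boldsymbol{k}_I}(V^{\boldsymbol{H}_J}_{\boldsymbol{0}_J}\mathcal{D}_X)=V^{\boldsymbol{H}_{I\cup J}}_{\boldsymbol{k}_I,\boldsymbol{0}_J}\mathcal{D}_X$; the special case $V^{\boldsymbol{H}_{I^c}}_{-\boldsymbol{1}_j}(V^{\boldsymbol{H}_I}_{\boldsymbol{0}_I}\mathcal{D}_X)=V_{-\boldsymbol{1}_j}\mathcal{D}_X$ is already recorded above.

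For the first assertion, let $m$ be a local section and $i\in I$; the sans pente hypothesis on $\mathcal{M}$ gives $b_i$ with $b_i(E_i)m\in V_{-\boldsymbol{1}_i}\mathcal{D}_X\cdot m\subset V^{\boldsymbol{H}_I}_{-\boldsymbol{1}_i}\mathcal{D}_X\cdot m$, so $(\boldsymbol{H}_I,\mathcal{M})$ is multispécialisable sans pente par section. For the second, a local section $m$ of $V^{\boldsymbol{H}_I}_{\boldsymbol{\alpha}_I}\mathcal{M}$ satisfies, for $j\in I^c$, a relation $b_j(E_j)m\in V_{-\boldsymbol{1}_j}\mathcal{D}_X\cdot m=V^{\boldsymbol{H}_{I^c}}_{-\boldsymbol{1}_j}(V^{\boldsymbol{H}_I}_{\boldsymbol{0}_I}\mathcal{D}_X)\cdot m$; since $E_j$ and $V^{\boldsymbol{H}_I}_{\boldsymbol{0}_I}\mathcal{D}_X$ preserve the submodule, this is exactly the par section condition inside $V^{\boldsymbol{H}_I}_{\boldsymbol{\alpha}_I}\mathcal{M}$, which is therefore sans pente along $\boldsymbol{H}_{I^c}$.

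For \eqref{VfiltVfilt} I reduce to $J=\{j\}$ by induction on $\#J$: once the one-hypersurface case is known, the general identity follows by peeling off one index of $J$ at a time and using the ring identities above. Fix then $j\notin I$. By the section-wise definition, membership of $m$ in $V^{\boldsymbol{H}_{I\cup\{j\}}}_{\boldsymbol{\alpha}_I,\alpha_j}\mathcal{M}$, in $V^{\boldsymbol{H}_I}_{\boldsymbol{\alpha}_I}\mathcal{M}$ and in $V^{\boldsymbol{H}_{\{j\}}}_{\alpha_j}\mathcal{M}$ is governed by the condition that the roots of the relevant index-$i$ Bernstein--Sato polynomials of $m$ be $\geq -\alpha_i-1$. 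The ring inclusion yields, for each index, the divisibility $b^{\boldsymbol{H}_I}_{i,m}\mid b^{\boldsymbol{H}_{I\cup\{j\}}}_{i,m}$ (and likewise for $j$), so the roots of the smaller-collection polynomial lie among those of the larger; this gives at once the easy inclusions
\[
V^{\boldsymbol{H}_{I\cup\{j\}}}_{\boldsymbol{\alpha}_I,\alpha_j}\mathcal{M}\subset V^{\boldsymbol{H}_I}_{\boldsymbol{\alpha}_I}\mathcal{M}\cap V^{\boldsymbol{H}_{\{j\}}}_{\alpha_j}\mathcal{M},\qquad V^{\boldsymbol{H}_I}_{\boldsymbol{\alpha}_I}\!\left(V^{\boldsymbol{H}_{\{j\}}}_{\alpha_j}\mathcal{M}\right)\subset V^{\boldsymbol{H}_I}_{\boldsymbol{\alpha}_I}\mathcal{M}\cap V^{\boldsymbol{H}_{\{j\}}}_{\alpha_j}\mathcal{M}.
\]

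The reverse inclusions are the crux, and this is the one place where the sans pente hypothesis is indispensable. What must be proved is that for $m\in V^{\boldsymbol{H}_{\{j\}}}_{\alpha_j}\mathcal{M}$ and $i\in I$ the smallest root of $b^{\boldsymbol{H}_{I\cup\{j\}}}_{i,m}$ coincides with that of $b^{\boldsymbol{H}_I}_{i,m}$, i.e. that the extra constraints in the $j$-direction separating $V^{\boldsymbol{H}_I}_{-\boldsymbol{1}_i}\mathcal{D}_X$ from $V^{\boldsymbol{H}_{I\cup\{j\}}}_{-\boldsymbol{1}_i}\mathcal{D}_X$ can be met without degrading the $E_i$-b-function. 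Starting from $b^{\boldsymbol{H}_I}_{i,m}(E_i)m=Pm$ with $P\in V^{\boldsymbol{H}_I}_{-\boldsymbol{1}_i}\mathcal{D}_X$, I would decompose $P$ by its $V^{\boldsymbol{H}_{\{j\}}}$-order and use the index-$j$ Bernstein--Sato relation available on $m$ to trade the pieces of strictly positive $j$-order against factors of $b^{\boldsymbol{H}_{\{j\}}}_{j,m}(E_j)$, iterating until $P$ lies in $V^{\boldsymbol{H}_{\{j\}}}_{\boldsymbol{0}}\mathcal{D}_X$, hence in $V^{\boldsymbol{H}_{I\cup\{j\}}}_{-\boldsymbol{1}_i}\mathcal{D}_X$. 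The absence of slopes is what guarantees that this reduction terminates and that the $i$-roots produced along the way are controlled by those already present, leaving the minimal $i$-root unchanged; this is precisely the commutation of $\textup{gr}$ in the $j$-direction with the $V$-filtration in the $I$-directions, carried out exactly as the corresponding step in the proof of théorème~1 of \cite{Maisonobe}. Granting it, the three membership conditions match term by term, giving both equalities; the middle term being symmetric in $I$ and $J$, interchanging their roles shows the iterated filtration is independent of the order, which is the main obstacle I anticipate.
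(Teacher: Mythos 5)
A preliminary remark: the paper gives no proof of this proposition at all --- like Propositions \ref{propmult1}, \ref{propmult2} and \ref{propmult3} it is recalled from \cite{Maisonobe} --- so your attempt can only be measured against the toolkit the paper sets up. Within that toolkit, your first two assertions are correctly established: the section-wise criterion of Proposition \ref{propmult1}, the inclusion $V_{-\boldsymbol{1}_i}\mathcal{D}_X\subset V^{\boldsymbol{H}_I}_{-\boldsymbol{1}_i}\mathcal{D}_X$ and the identification $V^{\boldsymbol{H}_{I^c}}_{-\boldsymbol{1}_j}(V^{\boldsymbol{H}_I}_{\boldsymbol{0}_I}\mathcal{D}_X)=V_{-\boldsymbol{1}_j}\mathcal{D}_X$ do give that $(\boldsymbol{H}_I,\mathcal{M})$ and $(\boldsymbol{H}_{I^c},V^{\boldsymbol{H}_I}_{\boldsymbol{\alpha}_I}\mathcal{M})$ are \emph{sans pente} (you should still say a word about the $V^{\boldsymbol{H}_I}_{\boldsymbol{0}_I}\mathcal{D}_X$-coherence of $V^{\boldsymbol{H}_I}_{\boldsymbol{\alpha}_I}\mathcal{M}$, which comes from the goodness of the canonical multifiltration, i.e.\ the analogue of Proposition \ref{propmult3}). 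The divisibility $b^{\boldsymbol{H}_I}_{i,m}\mid b^{\boldsymbol{H}_{I\cup\{j\}}}_{i,m}$ and the two easy inclusions in \eqref{VfiltVfilt} are also correct.

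The reverse inclusions --- which, as you acknowledge, are the whole content --- are not proved, and the reduction you sketch does not do what you claim. Write $P=\sum_k P^{(k)}$ with $P^{(k)}$ raising the $V^{H_j}$-order by $k$. The only way to use the relation $b_{j,m}(E_j)m\in V^{H_j}_{-1}\mathcal{D}_X\cdot m$ against a piece with $k>0$ is by precomposition, namely $b_{j,m}(E_j+k)\,P^{(k)}m=P^{(k)}\,b_{j,m}(E_j)m\in V^{H_j}_{k-1}\mathcal{D}_X\cdot m$; you cannot apply the $j$-relation \og inside \fg{} $P^{(k)}m$, because $P^{(k)}m$ is not a multiple of $m$. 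Hence the iteration necessarily accumulates factors $b_{j,m}(E_j+k)$ on the left of $b^{\boldsymbol{H}_I}_{i,m}(E_i)m=Pm$, and what it yields is a \emph{mixed} relation $\prod_{k>0}b_{j,m}(E_j+k)\cdot b^{\boldsymbol{H}_I}_{i,m}(E_i)\,m\in V^{\boldsymbol{H}_{I\cup\{j\}}}_{-\boldsymbol{1}_i}\mathcal{D}_X\cdot m$, i.e.\ an element of the Bernstein ideal in the two variables $(s_i,s_j)$, not a polynomial in $E_i$ alone. Passing from such a mixed relation to a relation $\tilde b(E_i)m\in V^{\boldsymbol{H}_{I\cup\{j\}}}_{-\boldsymbol{1}_i}\mathcal{D}_X\cdot m$ with all roots $\geq-\alpha_i-1$ is precisely where the \emph{sans pente} hypothesis must intervene, and it is the statement to be proved; your appeal to \og the corresponding step in théorème 1 of \cite{Maisonobe}\fg{} does not cover it, since that theorem is the existence and uniqueness of $V^{\sigma}_\bullet(\mathcal{M})$ (the source of Propositions \ref{propmult2} and \ref{propmult3} here), while \eqref{VfiltVfilt} is a separate result. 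The natural way to close the gap with the paper's own tools --- and the way the paper argues in the analogous situations of Propositions \ref{nilp} and \ref{VNilsson} --- is by uniqueness rather than by section-wise root matching: check that the iterated filtration $V^{\boldsymbol{H}_I}_{\bullet}\bigl(V^{\boldsymbol{H}_J}_{\bullet}\mathcal{M}\bigr)$ (equivalently the intersection) defines a good $V$-multifiltration of $\mathcal{M}$ whose index-$i$ Bernstein--Sato polynomials have their roots in the prescribed fundamental domains, then conclude that it coincides with the canonical multifiltration by the uniqueness statement of Proposition \ref{propmult2}.
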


On a également l'analogue de \cite[corollaire 4.2-7]{MM}

\begin{proposition}\label{exactV}

Pour tout $\alpha\in\mathbb{C}$ et tout $j\in I^c$, l'application $\mathcal{M}\mapsto V_\alpha^{H_j}(\mathcal{M})$ définit un foncteur exact de la catégorie des $V_{\mathbf{0}_I}^{\mathbf{H}_I}\mathcal{D}_X$-modules spécialisables le long de $H_j$ vers la catégorie des $V_0^{H_j}(V_{\mathbf{0}_I}^{\mathbf{H}_I})\mathcal{D}_X$-modules.

\end{proposition}

 Sachant que la $V$-multifiltration canonique est indexée par $A+\mathbb{Z}^p$ avec $A\subset[-1,0[^p$ fini, quitte à renuméroter ces indices on peut la supposer indexée par $\mathbb{Z}^p$ et appliquer la définition \ref{Fcomp} de l'appendice \ref{appB} aux $V$-filtrations canoniques de $\mathcal{M}$. 
 
 La condition \emph{sans pente} s'interprète de manière naturelle comme une condition de compatibilité des $V$-filtrations relatives aux différentes hypersurfaces considérées.

\begin{proposition}\label{compatVfiltr}

Si le couple $(\mathbf{H},\mathcal{M})$ est sans pente alors les filtrations $V_\bullet^{H_1}(\mathcal{M}),...,V_\bullet^{H_p}(\mathcal{M})$ de $\mathcal{M}$ sont \emph{compatibles} au sens de la définition \ref{Fcomp}.

\end{proposition}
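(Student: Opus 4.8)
Le plan est de déduire cette proposition de la proposition \ref{compomult}, et plus précisément de l'identité \eqref{VfiltVfilt}, qui en constitue le cœur. La compatibilité des filtrations $V_\bullet^{H_1}(\mathcal{M}),\ldots,V_\bullet^{H_p}(\mathcal{M})$ au sens de la définition \ref{Fcomp} revient à demander que, pour toute partition $\{1,\ldots,p\}=I\sqcup I^c$ et tout multi-indice, le passage au gradué relativement aux hypersurfaces indexées par $I$ commute au passage au gradué relativement à celles indexées par $I^c$, les crans de la multifiltration se calculant comme intersections de crans des filtrations individuelles. Or c'est exactement ce que fournit la chaîne d'égalités
\[V_{\boldsymbol{\alpha}_I,\boldsymbol{\alpha}_{I^c}}^{\boldsymbol{H}_I\cup\boldsymbol{H}_{I^c}}(\mathcal{M})=V_{\boldsymbol{\alpha}_I}^{\boldsymbol{H}_I}(\mathcal{M})\cap V_{\boldsymbol{\alpha}_{I^c}}^{\boldsymbol{H}_{I^c}}(\mathcal{M})=V_{\boldsymbol{\alpha}_I}^{\boldsymbol{H}_I}\bigl(V_{\boldsymbol{\alpha}_{I^c}}^{\boldsymbol{H}_{I^c}}(\mathcal{M})\bigr),\]
appliquée à toutes les partitions possibles : l'égalité de gauche exprime la multifiltration canonique comme intersection, celle de droite comme filtration itérée.

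Je procéderais alors par récurrence sur $p$. Le cas $p=1$ est immédiat. Pour l'hérédité, je fixe un indice $j$ : d'après la proposition \ref{compomult}, le couple $(\boldsymbol{H}_{\{j\}^c},V_{\alpha_j}^{H_j}\mathcal{M})$ est sans pente, et l'exactitude du foncteur $V_{\alpha_j}^{H_j}$ (proposition \ref{exactV}) montre qu'il en va de même pour $\textup{gr}_{\alpha_j}^{H_j}\mathcal{M}$. L'hypothèse de récurrence s'applique donc à ce gradué : les $p-1$ filtrations induites par les $V_\bullet^{H_i}$ ($i\neq j$) y sont compatibles. En recollant cette compatibilité avec l'entrelacement par intersection de $V_\bullet^{H_j}$ aux autres filtrations fourni par \eqref{VfiltVfilt}, on obtient la distributivité du treillis engendré par l'ensemble des crans $V_{\alpha_i}^{H_i}(\mathcal{M})$, c'est-à-dire la compatibilité recherchée.

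Le point délicat sera de relier précisément la condition abstraite de la définition \ref{Fcomp} à l'identité \eqref{VfiltVfilt} : il faut vérifier que la filtration induite par $V_\bullet^{H_i}$ sur un gradué $\textup{gr}_{\alpha_j}^{H_j}\mathcal{M}$ coïncide avec la $V$-multifiltration canonique de ce dernier, vu comme $V_0^{H_j}(\mathcal{D}_X)$-module sans pente le long de $H_i$. C'est ici qu'interviennent de façon essentielle, d'une part la transmission de la propriété sans pente aux gradués (proposition \ref{compomult}), d'autre part l'exactitude de $V_{\alpha_j}^{H_j}$ (proposition \ref{exactV}), qui garantit la stricte compatibilité des passages au gradué successifs et, par suite, l'indépendance du résultat vis-à-vis de l'ordre d'itération — propriété qui est précisément le contenu de la compatibilité au sens de \ref{Fcomp}.
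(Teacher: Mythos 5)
Votre proposition repose sur deux affirmations qui ne sont pas établies, et qui constituent précisément le cœur de la démonstration. Premièrement, vous affirmez que la compatibilité au sens de la définition \ref{Fcomp} <<revient à>> la commutation des passages aux gradués jointe au calcul des crans de la multifiltration comme intersections. Ce n'est pas la définition : la compatibilité exige l'existence d'un $p$-hypercomplexe dont toutes les suites courtes, dans les $p$ directions, sont exactes. La commutation des gradués en est une \emph{conséquence} (proposition \ref{commutmultgrad}), et les formules d'intersection \eqref{VfiltVfilt} ne sauraient lui être équivalentes : des sous-objets quelconques admettent toujours des intersections, et la remarque \ref{remcompatibilité} souligne justement que pour $n\geq 3$ ils ne sont pas compatibles en général. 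De même, l'équivalence entre compatibilité et <<distributivité du treillis>> que vous invoquez en conclusion n'est démontrée nulle part dans l'article et demanderait elle-même une preuve. Deuxièmement, l'étape d'hérédité --- <<en recollant cette compatibilité avec l'entrelacement par intersection on obtient la distributivité>> --- n'est pas un argument : savoir que les $p-1$ filtrations induites sur $\textup{gr}_{\alpha_j}^{H_j}\mathcal{M}$ sont compatibles, joint à \eqref{VfiltVfilt}, ne produit pas par lui-même le $p$-hypercomplexe requis sur $\mathcal{M}$. Vous avez d'ailleurs identifié vous-même ce <<point délicat>> sans le résoudre.

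La démonstration de l'article construit explicitement cet hypercomplexe, par récurrence sur le nombre de filtrations, et son mécanisme est différent du vôtre : on ne passe pas aux gradués, mais on applique au $q$-hypercomplexe déjà construit le couple de foncteurs exacts $V^{H_{q+1}}_{\alpha_{q+1}}(\cdot)\hookrightarrow V^{H_{q+1}}_{\beta_{q+1}}(\cdot)$ (monomorphisme de foncteurs exacts, proposition \ref{exactV}) puis le conoyau ; la suite courte de $q$-hypercomplexes ainsi obtenue \emph{est} le $(q+1)$-hypercomplexe, l'exactitude dans les anciennes directions étant préservée par l'exactitude de ces foncteurs et le lemme du serpent, les identifications \eqref{VfiltVfilt} garantissant que les objets obtenus sont bien les bons. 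Notez aussi un point technique que votre schéma ignore : pour pouvoir appliquer la proposition \ref{exactV} il faut rester dans la catégorie des $V_{\mathbf{0}}$-modules cohérents spécialisables, ce qui oblige à travailler d'abord avec les tronqués $V_{\boldsymbol{\beta}_{I_p}}^{\boldsymbol{H}_{I_p}}\mathcal{M}$ pour $\boldsymbol{\beta}>\boldsymbol{\alpha}$ fixé, puis à récupérer $\mathcal{M}$ par limite inductive sur $\boldsymbol{\beta}$. Les ingrédients que vous citez (propositions \ref{compomult} et \ref{exactV}, identité \eqref{VfiltVfilt}, récurrence sur le nombre d'hypersurfaces) sont les bons, mais la construction qui les assemble manque.
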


\begin{proof}

Soit $\boldsymbol{\alpha}<\boldsymbol{\beta}\in\mathbb{C}^p$ et notons $I_q:=\{1,...,q\}$. On va construire par récurrence sur l'entier $p$ le $p$-hypercomplexe $X_p$ correspondant à la compatibilité des sous-objets \[V^{H_1}_{{\alpha}_1}(V_{\boldsymbol{\beta}_{I_p}}^{\boldsymbol{H}_{I_p}}\mathcal{M}),...,V^{H_p}_{{\alpha}_p}(V_{\boldsymbol{\beta}_{I_p}}^{\boldsymbol{H}_{I_p}}\mathcal{M})\subseteq V_{\boldsymbol{\beta}_{I_p}}^{\boldsymbol{H}_{I_p}}\mathcal{M}.\]

 D'après la remarque \ref{remcompatibilité}, deux filtrations sont toujours compatibles. Supposons construit le $q$-hypercomplexe $X_q$. D'après la proposition \ref{compomult} la propriété sans pente assure que les objets qui apparaissent dans $X_q$ sont des $V_{\mathbf{0}_{I_q}}^{\boldsymbol{H}_{I_q}}\mathcal{D}_X$-modules cohérents spécialisables le long de $H_{q+1}$. On déduit alors de la proposition \ref{exactV} que l'application de $V^{H_{q+1}}_{\alpha_{q+1}}(.)$ et $V^{H_{q+1}}_{\beta_{q+1}}(.)$ à de tels objets sont deux foncteurs exacts munis d'un monomorphisme de foncteurs donné par l'inclusion naturelle déduite de l'inégalité $\alpha_{q+1}\leq \beta_{q+1}$. On applique alors ces deux foncteurs à $X_q$, la fonctorialité fournit un $(q+1)$-hypercomplexe 
\[\xymatrix{0 \ar[r] & V^{H_{q+1}}_{\alpha_{q+1}}(X^q) \ar@{^{(}->}[r]^{i} &  V^{H_{q+1}}_{\beta_{q+1}}(X^q) \ar[r] &  \text{Coker}(i) \ar[r] & 0. 
}\]
C'est le $(q+1)$-hypercomplexe $X_{q+1}$ voulu. L'exactitude des différentes suites courtes provient de l'exactitude des suite courtes de $X^q$, de l'exactitude des foncteurs $V^{H_{q+1}}$-filtration ainsi que de l'exactitude du foncteur Coker(.) appliqué à des inclusions (lemme du serpent). On utilise également ici les identifications \eqref{VfiltVfilt}. Ceci nous donne par récurrence le $p$-hypercomplexe $X_p$. En prenant alors la limite inductive des $p$-hypercomplexes $X_p$ sur $\boldsymbol{\beta}\in\mathbb{C}^p$ on obtient le $p$-hypercomplexe correspondant à la compatibilité des sous-objets
\[V^{H_1}_{{\alpha}_1}(\mathcal{M}),...,V^{H_p}_{{\alpha}_p}(\mathcal{M})\subseteq \mathcal{M}.\]
Ceci étant vérifié pour tout $\boldsymbol{\alpha}\in\mathbb{C}^p$ la proposition est démontrée.
\end{proof}

La proposition \ref{commutmultgrad} fournit le corollaire suivant

\begin{corollary}\label{commutmultgradV}

Si le couple $(\mathbf{H},\mathcal{M})$ est sans pente alors l'objet obtenu en appliquant successivement les gradués $\textup{gr}_{\alpha_{i}}^{H_{i}}$ par rapport aux $V$-filtrations canoniques $V^{H_i}_{\bullet}$ ne dépend pas de l'ordre dans lequel on applique ces foncteurs et est égal à $\textup{gr}_{\boldsymbol{\alpha}}(\mathcal{M})$.

\end{corollary}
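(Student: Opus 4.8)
The plan is to deduce the corollary formally from the abstract theory of compatible filtrations developed in the appendix, the substantive work having been front‑loaded into establishing compatibility. Concretely, the statement is an instance of Proposition \ref{commutmultgrad}, so the proof amounts to verifying that its hypotheses are met for the family of canonical $V$-filtrations and then reading off its conclusion.

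First I would invoke Proposition \ref{compatVfiltr}: because the couple $(\mathbf{H},\mathcal{M})$ is sans pente, the $p$ canonical $V$-filtrations $V_\bullet^{H_1}(\mathcal{M}),\ldots,V_\bullet^{H_p}(\mathcal{M})$ on $\mathcal{M}$ are compatible in the sense of Definition \ref{Fcomp}. This is the essential translation of the absence of slopes into the precise hypothesis demanded by the abstract result, and it is exactly what forces the successive graded functors to behave well.

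Next I would apply Proposition \ref{commutmultgrad} to this compatible family. That proposition asserts that for finitely many compatible filtrations the associated graded functors commute up to canonical isomorphism; applied here it yields that $\textup{gr}_{\alpha_{i_1}}^{H_{i_1}}\cdots\textup{gr}_{\alpha_{i_p}}^{H_{i_p}}(\mathcal{M})$ is independent of the ordering $(i_1,\ldots,i_p)$. To identify this common object with the multigraded $\textup{gr}_{\boldsymbol{\alpha}}(\mathcal{M})$ I would use the identity \eqref{VfiltVfilt} of Proposition \ref{compomult}, which realises the canonical multifiltration $V_{\boldsymbol{\alpha}}(\mathcal{M})$ as the iterated (equivalently, intersected) single $V$-filtrations; taking the associated graded of this compatible system then gives $\textup{gr}_{\boldsymbol{\alpha}}(\mathcal{M})$ by its very definition.

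The main obstacle is therefore not in the corollary itself, which is formal, but upstream: it resides in the abstract commutation statement of the appendix and, above all, in Proposition \ref{compatVfiltr}, whose inductive proof on $p$ assembles the compatibility datum from the exactness of the $V$-filtration functors (Proposition \ref{exactV}) together with the identifications \eqref{VfiltVfilt}. Once these are granted, deducing the corollary requires only bookkeeping.
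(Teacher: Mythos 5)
Your proposal is correct and follows exactly the paper's route: the paper derives the corollary by combining Proposition \ref{compatVfiltr} (sans pente implies the canonical $V$-filtrations are compatible) with Proposition \ref{commutmultgrad}, the identification with $\textup{gr}_{\boldsymbol{\alpha}}(\mathcal{M})$ coming from \eqref{VfiltVfilt} just as you describe. Your closing remark that the real content lies upstream in Proposition \ref{compatVfiltr} matches the paper's presentation, which states the corollary without further argument.
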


\begin{proposition}

Soit $\mathcal{M}$ un $\mathcal{D}_X$-module cohérent tel que $(\boldsymbol{H},\mathcal{M})$ soit sans pente et soit $1\leq i\leq p$. Alors le $\mathcal{D}_X$-module $\mathcal{M}(*H_i)$ est cohérent et le couple $(\boldsymbol{H},\mathcal{M}(*H_i))$ est sans pente. De plus, pour tout $\boldsymbol{\alpha}$ vérifiant $\alpha_i<0$, le morphisme naturel de $V_{\boldsymbol{0}}\mathcal{D}_X$-modules:
\[V_{\boldsymbol{\alpha}}(\mathcal{M})\to V_{\boldsymbol{\alpha}}(\mathcal{M}(*H_i))\]
est un isomorphisme.

\end{proposition}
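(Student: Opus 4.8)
The plan is to reduce everything to the behaviour along the single hypersurface $H_i$, the compatibility of the $V$-filtrations in the other directions being supplied by Proposition~\ref{compomult}. Since $(\boldsymbol H,\mathcal M)$ is without slopes, $\mathcal M$ is specializable along $H_i$ and $V_\bullet^{H_i}(\mathcal M)$ is defined. The key one-variable input I would invoke is that $t_i\colon \mathrm{gr}_\alpha^{H_i}(\mathcal M)\to\mathrm{gr}_{\alpha-1}^{H_i}(\mathcal M)$ is an isomorphism for $\alpha<0$ and that the $t_i$-torsion of $\mathcal M$ lies in $V_{\ge 0}^{H_i}(\mathcal M)$ (its generator has $E_i$-eigenvalue $-1$, hence sits in $\mathrm{gr}_0^{H_i}$). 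From these two facts I deduce, by dévissage on the graded pieces, that $t_i\colon V_\alpha^{H_i}(\mathcal M)\to V_{\alpha-1}^{H_i}(\mathcal M)$ is an isomorphism for every $\alpha<0$. The localization morphism $\mathcal M\to\mathcal M(*H_i)$ then induces $V_\alpha^{H_i}(\mathcal M)\xrightarrow{\sim}V_\alpha^{H_i}(\mathcal M(*H_i))$ for $\alpha<0$: injectivity because $V_{<0}^{H_i}$ meets no torsion, and surjectivity by clearing a pole $t_i^{-k}$ one power at a time, which is legitimate since $t_i$ is onto in the range $\alpha<0$. Combining this with the identity $V_{\boldsymbol\alpha}(\mathcal M)=\bigcap_j V_{\alpha_j}^{H_j}(\mathcal M)$ of Proposition~\ref{compomult} and the fact that localization along $H_i$ leaves the factors $j\ne i$ untouched yields the asserted isomorphism $V_{\boldsymbol\alpha}(\mathcal M)\to V_{\boldsymbol\alpha}(\mathcal M(*H_i))$ for $\alpha_i<0$.

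For coherence I would exhibit $\mathcal M(*H_i)$ as $\mathcal D_X\cdot V_0^{H_i}(\mathcal M(*H_i))$ with a $V_{\boldsymbol0}\mathcal D_X$-coherent generator. Because $t_i$ is invertible on $\mathcal M(*H_i)$, one has $V_0^{H_i}(\mathcal M(*H_i))=t_i^{-1}V_{-1}^{H_i}(\mathcal M(*H_i))=t_i^{-1}V_{-1}^{H_i}(\mathcal M)$, the last equality by the isomorphism of the first step; since $V_{-1}^{H_i}(\mathcal M)$ is $V_{\boldsymbol0}\mathcal D_X$-coherent (Proposition~\ref{propmult3}) and $t_i^{-1}$ is an isomorphism, so is $V_0^{H_i}(\mathcal M(*H_i))$. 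That this generator generates over $\mathcal D_X$ follows from the other one-variable fact, namely that $\partial_i\colon\mathrm{gr}_\beta^{H_i}\to\mathrm{gr}_{\beta+1}^{H_i}$ is an isomorphism for $\beta>-1$ on the localized module, so that $V_\alpha^{H_i}(\mathcal M(*H_i))$ with $\alpha>0$ is reached from $V_0^{H_i}$ by applying powers of $\partial_i$.

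To prove that $(\boldsymbol H,\mathcal M(*H_i))$ is without slopes I would use the section-wise criterion of Définition~\ref{defmultispe}~(2), which is equivalent to the good-filtration one by Proposition~\ref{propmult1} now that coherence is available. Every local section of $\mathcal M(*H_i)$ is of the form $t_i^{-k}\bar n$, with $\bar n$ the image of a section $n$ of $\mathcal M$. For the direction $i$, the relation $E_i t_i^{-k}=t_i^{-k}(E_i-k)$ gives $b_{i,n}(E_i+k)(t_i^{-k}\bar n)\in V_{\boldsymbol{-1}_i}\mathcal D_X\cdot(t_i^{-k}\bar n)$, so a shifted polynôme de Bernstein--Sato does the job. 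For a direction $j\ne i$, the operator $E_j$ and the sheaf $V_{\boldsymbol{-1}_j}\mathcal D_X$ commute with multiplication by $t_i^{-k}$, so $b_{j,n}$ itself is a $b$-function of $t_i^{-k}\bar n$ in the direction $j$. Every section thus admits $b$-functions in all $p$ directions, which is exactly the condition \emph{sans pente}.

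The delicate point is the one-variable input of the first step — the isomorphism $t_i\colon V_\alpha^{H_i}(\mathcal M)\to V_{\alpha-1}^{H_i}(\mathcal M)$ for $\alpha<0$ together with the localization of the torsion into $V_{\ge0}^{H_i}(\mathcal M)$ — for this is what simultaneously powers the comparison of $V$-filtrations (part 3) and the coherent generation (part 1). Once it is established, parts 1 and 2 are essentially formal, the compatibility of the filtrations $V_\bullet^{H_j}$ recorded in les propositions~\ref{compomult} et~\ref{compatVfiltr} guaranteeing that treating one hypersurface at a time loses no information.
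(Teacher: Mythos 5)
Your end-game coincides with the paper's proof: the section-wise $b$-function argument for the condition \emph{sans pente} (your split into the direction $i$, via $E_it_i^{-k}=t_i^{-k}(E_i-k)$, and the directions $j\neq i$, where conjugation by $t_i^{k}$ preserves $V_{-\boldsymbol{1}_j}\mathcal{D}_X$, is exactly the paper's computation with the $\delta_{ij}$ shift), and the passage from the single hypersurface $H_i$ to the multifiltration via Proposition \ref{compomult} and \eqref{VfiltVfilt} is also the paper's. The genuine gap is in your first two paragraphs, where you undertake to re-derive the one-variable localization theorem instead of quoting it: as structured, your argument is circular. Your step 1 manipulates $V^{H_i}_{\alpha}(\mathcal{M}(*H_i))$, but this canonical filtration only exists once $\mathcal{M}(*H_i)$ is known to be a coherent $\mathcal{D}_X$-module specializable along $H_i$ (Propositions \ref{propmult1}--\ref{propmult3} are statements about coherent modules). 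Coherence, however, is your step 2, which explicitly invokes the isomorphism of step 1 (in the equality $V_0^{H_i}(\mathcal{M}(*H_i))=t_i^{-1}V_{-1}^{H_i}(\mathcal{M})$), and specializability is your step 3, which invokes step 2 (``now that coherence is available'') in order to apply Proposition \ref{propmult1}. None of the three steps can get off the ground before the other two.

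Even setting the circularity aside, the coherence argument is incomplete: exhibiting $\mathcal{M}(*H_i)$ as $\mathcal{D}_X\cdot N$ with $N$ coherent over $V_0^{H_i}\mathcal{D}_X$ (not over $V_{\boldsymbol{0}}\mathcal{D}_X$, incidentally) only presents it as the image of the coherent module $\mathcal{D}_X\otimes_{V_0^{H_i}\mathcal{D}_X}N$ inside a module not yet known to be coherent, and such an image is a priori merely locally of finite type. The standard repair --- which is, in substance, the proof of the result you are re-proving --- is the stabilization argument: locally, for generators $m_j$ of $\mathcal{M}$, the $b$-function of $t_i^{-k}m_j$ gives $\mathcal{D}_X\,t_i^{-k-1}m_j=\mathcal{D}_X\,t_i^{-k}m_j$ as soon as $-k-1$ avoids its roots, whence $\mathcal{M}(*H_i)=\sum_j\mathcal{D}_X\,t_i^{-k_0}m_j$ is coherent; one then takes as \emph{candidate} $V$-filtration on $\mathcal{M}(*H_i)$ the images of the $V^{H_i}_{\alpha}(\mathcal{M})$ for $\alpha<0$, extended by powers of $\partial_i$, checks the Bernstein-Sato property, and identifies it with the canonical filtration by uniqueness --- obtaining coherence, specializability along $H_i$, and your step-1 comparison in a consistent order. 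The paper sidesteps all of this by quoting \cite[proposition 4.4-3]{MM}, which delivers precisely that package, after which its proof consists of the two steps you do carry out correctly.
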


\begin{proof}

Comme $(\boldsymbol{H},\mathcal{M})$ est sans pente, $\mathcal{M}$ est spécialisable le long de $H_i$ et on peut appliquer \cite[proposition 4.4-3]{MM} qui assure que $\mathcal{M}(*H_i)$ est cohérent, spécialisable le long de $H_i$ et que pour $\alpha_i<0$,
\[V^{H_i}_{{\alpha}_i}(\mathcal{M})\to V^{H_i}_{{\alpha}_i}(\mathcal{M}(*H_i))\]
est un isomorphisme.

Montrons que le couple $(\boldsymbol{H},\mathcal{M}(*H_i))$ est sans pente. C'est un problème local, on peut supposer que $H_i=\{t_i=0\}$. Soit $m'$ une section de $\mathcal{M}(*H_i)$, on a $m'=m/{t_i}^{k}$ où $m$ est dans l'image de $\mathcal{M}\to\mathcal{M}[1/{t_i}]$ et $k\in\mathbb{N}$. Le couple $(\boldsymbol{H},\mathcal{M})$ étant sans pente, pour tout $1\leq j\leq p$ il existe un polynôme non nul $b_j(s_j)$ satisfaisant à 
\[b_j(E_j)m\in V_{-\boldsymbol{1}_j}(\mathcal{D}_X)m.\]
On a alors 
\begin{align*}
b_j(E_j)t_i^km' & \in V_{-\boldsymbol{1}_j}(\mathcal{D}_X)t_i^km' \\
t_i^k b_j(E_j+\delta_{ij}k)m' & \in t_i^kV_{-\boldsymbol{1}_j}(\mathcal{D}_X)m'.
\end{align*}
En divisant par $t_i^k$ on obtient, $b_j(E_j+\delta_{ij}k_i)m' \in V_{-\boldsymbol{1}_j}(\mathcal{D}_X)m'$, ce qui permet de conclure que $(\boldsymbol{H},\mathcal{M}(*H_i))$ est sans pente.

D'après la proposition \ref{compomult} $V^{H_i}_{{\alpha}_i}(\mathcal{M})$ et $ V^{H_i}_{{\alpha}_i}(\mathcal{M}(*H_i))$ sont des $V_{{0}}^{{H}_i}\mathcal{D}_X$-modules sans pente le long de $\boldsymbol{H}_{\{i\}^c}$ donc, si $\boldsymbol{\alpha}$ satisfait à $\alpha_i<0$, on a un isomorphisme
\[V_{\boldsymbol{\alpha}}(\mathcal{M}) \simeq V^{\boldsymbol{H}_{\{i\}^c}}_{\boldsymbol{\alpha}_{\{i\}^c}}\left(V^{H_i}_{\alpha_i}(\mathcal{M})\right)
\xrightarrow{\sim}
V^{\boldsymbol{H}_{\{i\}^c}}_{\boldsymbol{\alpha}_{\{i\}^c}}\left(V^{H_i}_{\alpha_i}(\mathcal{M}(*H_i))\right) \simeq
 V_{\boldsymbol{\alpha}}(\mathcal{M}(*H_i))\]
ce qui conclut la démonstration de la proposition.

\end{proof}

\begin{corollary}\label{loca}

Soit $\mathcal{M}$ un $\mathcal{D}_X$-module cohérent tel que $(\boldsymbol{H},\mathcal{M})$ soit sans pente. Alors le $\mathcal{D}_X$-module $\mathcal{M}(*(H_1\cup...\cup H_p))$ est cohérent et le couple $(\boldsymbol{H},\mathcal{M}(*(H_1\cup...\cup H_p)))$ est sans pente. De plus pour tout $\boldsymbol{\alpha}$ vérifiant $\alpha_i<0$ pour tout $1\leq i\leq p$, le morphisme naturel de $V_{\boldsymbol{0}}\mathcal{D}_X$-modules:
\[V_{\boldsymbol{\alpha}}(\mathcal{M})\to V_{\boldsymbol{\alpha}}(\mathcal{M}(*(H_1\cup...\cup H_p))\]
est un isomorphisme.

\end{corollary}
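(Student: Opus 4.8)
La stratégie est d'itérer la proposition précédente le long des composantes $H_1,\dots,H_p$, la seule vérification préalable étant que la localisation le long de la réunion se décompose en localisations successives. Comme $H_1\cup\dots\cup H_p$ est un diviseur à croisements normaux d'équations locales $t_1,\dots,t_p$, on a localement $\mathcal{O}_X(*(H_1\cup\dots\cup H_p))=\mathcal{O}_X[1/(t_1\cdots t_p)]=\mathcal{O}_X[1/t_1]\cdots[1/t_p]$. Par associativité du produit tensoriel $\mathcal{M}(*H)=\mathcal{M}\otimes_{\mathcal{O}_X}\mathcal{O}_X(*H)$, cela fournit la factorisation
\[\mathcal{M}(*(H_1\cup\dots\cup H_p))=\bigl(\cdots\bigl(\mathcal{M}(*H_1)\bigr)(*H_2)\cdots\bigr)(*H_p).\]

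Je poserais alors $\mathcal{M}_0:=\mathcal{M}$ et $\mathcal{M}_i:=\mathcal{M}_{i-1}(*H_i)$ pour $1\leq i\leq p$, de sorte que $\mathcal{M}_p=\mathcal{M}(*(H_1\cup\dots\cup H_p))$. Une récurrence sur $i$ établit que $\mathcal{M}_i$ est cohérent et que le couple $(\boldsymbol{H},\mathcal{M}_i)$ est sans pente: l'initialisation est l'hypothèse sur $\mathcal{M}_0$, et le pas de récurrence s'obtient en appliquant la proposition précédente à $\mathcal{M}_{i-1}$ (qui est sans pente par hypothèse de récurrence) et à l'indice $i$, ce qui assure que $\mathcal{M}_i=\mathcal{M}_{i-1}(*H_i)$ est cohérent et sans pente. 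En prenant $i=p$ on obtient la première partie de l'énoncé.

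Pour l'isomorphisme, je fixerais $\boldsymbol{\alpha}$ avec $\alpha_j<0$ pour tout $1\leq j\leq p$. À chaque étape $i$, la condition requise par la proposition précédente, à savoir $\alpha_i<0$, est satisfaite; appliquée à $\mathcal{M}_{i-1}$, elle fournit un isomorphisme de $V_{\boldsymbol{0}}\mathcal{D}_X$-modules
\[V_{\boldsymbol{\alpha}}(\mathcal{M}_{i-1})\xrightarrow{\sim}V_{\boldsymbol{\alpha}}(\mathcal{M}_i).\]
La composée de ces isomorphismes pour $i=1,\dots,p$ donne l'isomorphisme cherché $V_{\boldsymbol{\alpha}}(\mathcal{M})\xrightarrow{\sim}V_{\boldsymbol{\alpha}}(\mathcal{M}(*(H_1\cup\dots\cup H_p)))$. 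Comme le morphisme naturel $\mathcal{M}\to\mathcal{M}(*(H_1\cup\dots\cup H_p))$ est la composée des morphismes naturels $\mathcal{M}_{i-1}\to\mathcal{M}_i$, cette composée s'identifie bien au morphisme naturel de l'énoncé.

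La difficulté est ici mineure, l'essentiel du travail ayant été accompli dans la proposition précédente. Les deux points à soigner sont la factorisation de la localisation en localisations successives et le fait que, à chaque étape, la proposition n'impose une condition que sur l'indice $\alpha_i$ effectivement localisé, de sorte que l'hypothèse globale $\alpha_j<0$ pour tout $j$ suffit à l'appliquer successivement.
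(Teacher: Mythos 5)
Votre preuve est correcte et suit exactement la même démarche que celle du texte, qui procède par récurrence sur le nombre d'hypersurfaces en appliquant itérativement la proposition précédente; vous explicitez simplement les détails (factorisation $\mathcal{M}(*(H_1\cup\dots\cup H_p))=(\cdots(\mathcal{M}(*H_1))\cdots)(*H_p)$ et composition des isomorphismes) que le texte laisse implicites.
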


\begin{proof}

On effectue une récurrence sur le nombre d'hypersurfaces par rapport auxquelles on localise $\mathcal{M}$ et le corollaire est une conséquence immédiate de la proposition précédente.

\end{proof}

\subsection{Gradués d'un $\mathcal{D}_X$-module sans pente et cycles proches algébriques}

Ici on démontre des propriétés des gradués de la $V$-multifiltration de Malgrange-Kashiwara et on définit les cycles proches algébriques.

\begin{proposition}\label{nilp}

Soit $\mathcal{M}$ un $\mathcal{D}_X$-module tel que $(\boldsymbol{H},\mathcal{M})$ soit sans pente. Pour tout $\beta\in\mathbb{C}$ et tout $1\leq i\leq p$, l'endomorphisme $(E_i+\beta+1)$ de 
\[V_{\beta,\boldsymbol{\alpha}_{\{i\}^c}}(\mathcal{M})/V_{<\beta,\boldsymbol{\alpha}_{\{i\}^c}}(\mathcal{M})\]
est nilpotent.
\end{proposition}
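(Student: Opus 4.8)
The plan is to reduce to a purely one-variable statement along $H_i$ and then to extract the nilpotency from the defining $b$-function property of the canonical $V$-filtration. First I would reinterpret the graded piece using the compatibility results. By Proposition \ref{compomult} and the identities \eqref{VfiltVfilt}, one has $V_{\beta,\boldsymbol{\alpha}_{\{i\}^c}}(\mathcal{M})=V^{H_i}_\beta(\mathcal{N})$ and $V_{<\beta,\boldsymbol{\alpha}_{\{i\}^c}}(\mathcal{M})=V^{H_i}_{<\beta}(\mathcal{N})$, where $\mathcal{N}:=V^{\boldsymbol{H}_{\{i\}^c}}_{\boldsymbol{\alpha}_{\{i\}^c}}(\mathcal{M})$ is a coherent $V^{\boldsymbol{H}_{\{i\}^c}}_{\boldsymbol{0}}\mathcal{D}_X$-module which is spécialisable le long de $H_i$. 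Since $E_i\in V^{\boldsymbol{H}_{\{i\}^c}}_{\boldsymbol{0}}\mathcal{D}_X$ preserves $\mathcal{N}$ and its $V^{H_i}$-filtration, the claim becomes the one-variable assertion that $E_i+\beta+1$ is nilpotent on $\textup{gr}^{H_i}_\beta(\mathcal{N})$, and it suffices to work with the index-$i$ Bernstein polynomials $b_{i,m}$ of sections $m$ of $\mathcal{N}$.

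Next I would record the precise meaning of membership in terms of roots. Writing $s_0:=-\beta-1$, the definition of the canonical filtration gives that $m\in V^{H_i}_\beta(\mathcal{N})$ iff every root of $b_{i,m}$ is $\geq s_0$, while $m\in V^{H_i}_{<\beta}(\mathcal{N})$ iff every root of $b_{i,m}$ is $>s_0$ (for the lexicographic order a finite set of roots all $>s_0$ stays $\geq -\gamma-1$ for some $\gamma<\beta$, which is exactly what places $m$ in $V_\gamma$). Consequently a class $[m]\in\textup{gr}^{H_i}_\beta(\mathcal{N})$ is nonzero if and only if $s_0$ is a root of $b_{i,m}$.

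The heart of the argument is the injectivity of $(E_i-c)$ on $\textup{gr}^{H_i}_\beta(\mathcal{N})$ for every $c\neq s_0$. Suppose $(E_i-c)[m]=0$ with $[m]\neq 0$, and set $n:=(E_i-c)m\in V^{H_i}_{<\beta}(\mathcal{N})$. From $b_{i,n}(E_i)n\in V_{-\boldsymbol{1}_i}\mathcal{D}_X\cdot n$ and $(E_i-c)\in V_{\boldsymbol{0}}\mathcal{D}_X$ one gets $(E_i-c)\,b_{i,n}(E_i)m\in V_{-\boldsymbol{1}_i}\mathcal{D}_X\cdot m$; since the set of $h\in\mathbb{C}[s]$ with $h(E_i)m\in V_{-\boldsymbol{1}_i}\mathcal{D}_X\cdot m$ is an ideal generated by $b_{i,m}$, this forces $b_{i,m}(s)\mid (s-c)\,b_{i,n}(s)$. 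But every root of $b_{i,n}$ is $>s_0$ and $c\neq s_0$, so $s_0$ is not a root of $(s-c)b_{i,n}$, hence not of $b_{i,m}$, contradicting $[m]\neq 0$. Thus $(E_i-c)$ is injective on the graded. This step is the main obstacle: it is where one must combine the minimality (ideal) property of the section Bernstein polynomials with the sharp distinction between $V_\beta$ and $V_{<\beta}$ in terms of the root $s_0$, with careful bookkeeping of the lexicographic order.

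Finally I would conclude. For any $m\in V^{H_i}_\beta(\mathcal{N})$ one has $b_{i,m}(E_i)m\in V_{-\boldsymbol{1}_i}\mathcal{D}_X\cdot m\subseteq V^{H_i}_{<\beta}(\mathcal{N})$, so $b_{i,m}(E_i)[m]=0$. Factoring $b_{i,m}(s)=(s-s_0)^{e}g(s)$ with $g(s_0)\neq 0$, the operator $g(E_i)$ is a product of factors $(E_i-c)$ with $c\neq s_0$, hence injective on $\textup{gr}^{H_i}_\beta(\mathcal{N})$ by the previous step; from $g(E_i)(E_i-s_0)^{e}[m]=0$ we deduce $(E_i+\beta+1)^{e}[m]=0$. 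Since $\textup{gr}^{H_i}_\beta(\mathcal{N})$ is coherent, finitely many local generators yield a uniform exponent, so $E_i+\beta+1$ is nilpotent.
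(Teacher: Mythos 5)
Your proof is correct, but it follows a genuinely different route from the paper's. The paper argues at the level of filtrations, via Kashiwara's trick: writing $b_i(s)=b_i'(s)(s+\beta+1)^{\ell}$ for the index-$i$ Bernstein--Sato polynomial of the filtration $V^{\sigma}_{\bullet}$ with $\sigma=\sigma_{\beta,\boldsymbol{\alpha}_{\{i\}^c}}$, it introduces the auxiliary good multifiltration $U_{\boldsymbol{k}}(\mathcal{M}):=V^{\sigma}_{\boldsymbol{k}-\boldsymbol{1}_i}(\mathcal{M})+(E_i+k_i+\beta+1)^{\ell}V^{\sigma}_{\boldsymbol{k}}(\mathcal{M})$, checks that its Bernstein--Sato polynomials have roots in the shifted window, and invokes the uniqueness statement (proposition \ref{propmult2}) to identify $U_{\bullet}$ with $V^{\tilde{\sigma}}_{\bullet}$, $\tilde{\sigma}=\sigma_{<\beta,\boldsymbol{\alpha}_{\{i\}^c}}$; this gives at once $(E_i+\beta+1)^{\ell}V_{\beta,\boldsymbol{\alpha}_{\{i\}^c}}(\mathcal{M})\subseteq V_{<\beta,\boldsymbol{\alpha}_{\{i\}^c}}(\mathcal{M})$, i.e.\ nilpotency with the uniform exponent $\ell$. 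You instead work section by section with the polynomials $b_{i,m}$: your key step — that the set of $h$ with $h(E_i)m\in V_{-\boldsymbol{1}_i}\mathcal{D}_X\cdot m$ is an ideal generated by $b_{i,m}$, whence $b_{i,m}\mid (s-c)\,b_{i,n}$ for $n=(E_i-c)m$ — yields the injectivity of $E_i-c$ on the graded piece for $c\neq -\beta-1$, and then section-wise nilpotency by factoring $b_{i,m}$. This is closer to the paper's very definition of the canonical multifiltration (via roots of the $b_{i,m}$), avoids the uniqueness theorem, and gives a finer intermediate fact ($-\beta-1$ is the only possible ``eigenvalue'' of $E_i$ on $\textup{gr}$); the price is that the uniform exponent does not come for free but must be recovered at the end, where the paper gets $\ell$ in one shot.

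Two points in your last step should be made explicit. First, deducing nilpotency from the annihilation of finitely many local generators requires that $E_i+\beta+1$ act $V_{\boldsymbol{0}}\mathcal{D}_X$-linearly on the graded piece; this holds because $[E_i,V_{\boldsymbol{0}}\mathcal{D}_X]\subseteq V_{-\boldsymbol{1}_i}\mathcal{D}_X$, which sends $V_{\beta,\boldsymbol{\alpha}_{\{i\}^c}}(\mathcal{M})$ into $V_{<\beta,\boldsymbol{\alpha}_{\{i\}^c}}(\mathcal{M})$ and hence acts by zero on the quotient (this is the centrality of the $E_i$ implicit in the paper's identification $\textup{gr}_{\boldsymbol{0}}^{V}\mathcal{D}_X\simeq\mathcal{D}_{X_0}[E_1,...,E_p]$). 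Second, coherence of the quotient uses the discreteness of the jumps (proposition \ref{propmult3}, indexation by $A+\mathbb{Z}^p$ with $A$ finite), which guarantees that locally $V_{<\beta,\boldsymbol{\alpha}_{\{i\}^c}}(\mathcal{M})$ equals some $V_{\gamma,\boldsymbol{\alpha}_{\{i\}^c}}(\mathcal{M})$ with $\gamma<\beta$. With these remarks added, your argument is complete.
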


\begin{proof}

Notons $\sigma:=\sigma_{\beta,\boldsymbol{\alpha}_{\{i\}^c}}$ et $b_i(s)$ le polynôme de Bernstein-Sato d'indice $i$ de la multifiltration correspondant à la section $\sigma$. Les racines de $b_i$ sont donc dans l'intervalle $[-\beta-1,- \beta[$. Soit $\ell$ la multiplicité de la racine $-\beta-1$ de $b_i$. On pose $b_i(s)=b'_i(s)(s+\beta+1)^\ell$. On considère comme dans la preuve de \cite[Théorème 1]{Kashiwara83} la $V$-multifiltration de $\mathcal{M}$ suivante:
\[U_{{\boldsymbol{k}}}(\mathcal{M}):=V^{\sigma}_{{\boldsymbol{k}}-\boldsymbol{1}_i}(\mathcal{M})+(E_i+k_i+\beta+1)^\ell V^{\sigma}_{{\boldsymbol{k}}}(\mathcal{M}).\]
On peut montrer que c'est une bonne $V$-multifiltration, que ses polynômes de Bernstein-Sato d'indice $j\neq i$ divisent ceux de $V^\sigma_\bullet$ et que son polynôme de Bernstein-Sato d'indice $i$ divise $b'(s)(s+\beta)^\ell$. Les racines de $b'(s)(s+\beta)^\ell$ sont dans $]-\beta-1,-\beta]$, par unicité la multifiltration $U_{\bullet}(\mathcal{M})$ est égale à la multifiltration $V^{\tilde{\sigma}}_\bullet(\mathcal{M})$ où $\tilde{\sigma}=\sigma_{<\beta,\boldsymbol{\alpha}_{\{ i\}^c}}$. On a donc $U_{\boldsymbol{0}}(\mathcal{M})=V_{<\beta,\boldsymbol{\alpha}_{\{i\}^c}}(\mathcal{M})$ et on en déduit que $(E_i+\beta+1)^\ell$ annule 
\[V_{\beta,\boldsymbol{\alpha}_{\{i\}^c}}(\mathcal{M})/V_{<\beta,\boldsymbol{\alpha}_{\{i\}^c}}(\mathcal{M}).\]

\end{proof}

Étant donnée la définition de $\textup{gr}_{\boldsymbol{\alpha}}(\mathcal{M})$ on déduit immédiatement de cette proposition le corollaire suivant:

\begin{corollary}\label{cornilp}

Soit $\mathcal{M}$ un $\mathcal{D}_X$-module tel que $(\boldsymbol{H},\mathcal{M})$ soit sans pente. Pour tout $\boldsymbol{\alpha}\in\mathbb{C}^p$ et tout $1\leq i\leq p$, l'endomorphisme $(E_i+\alpha_i+1)$ de $\textup{gr}_{\boldsymbol{\alpha}}(\mathcal{M})$ est nilpotent.

\end{corollary}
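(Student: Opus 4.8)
The plan is to deduce the statement directly from Proposition~\ref{nilp} by comparing the full graded piece $\textup{gr}_{\boldsymbol{\alpha}}(\mathcal{M})=V_{\boldsymbol{\alpha}}(\mathcal{M})/V_{<\boldsymbol{\alpha}}(\mathcal{M})$ with the ``partial'' graded piece obtained by letting only the $i$-th index vary. Fix $1\leq i\leq p$ and $\boldsymbol{\alpha}\in\mathbb{C}^p$, and apply Proposition~\ref{nilp} with $\beta=\alpha_i$ and the same tuple $\boldsymbol{\alpha}_{\{i\}^c}$ for the remaining indices. This gives that $(E_i+\alpha_i+1)$ acts nilpotently on
\[
V_{\alpha_i,\boldsymbol{\alpha}_{\{i\}^c}}(\mathcal{M})/V_{<\alpha_i,\boldsymbol{\alpha}_{\{i\}^c}}(\mathcal{M}),
\]
which is precisely the operator appearing in the corollary. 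It remains only to transport this nilpotency to $\textup{gr}_{\boldsymbol{\alpha}}(\mathcal{M})$.

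The key observation is the comparison of numerators and denominators. The numerators coincide, since $V_{\alpha_i,\boldsymbol{\alpha}_{\{i\}^c}}(\mathcal{M})=V_{\boldsymbol{\alpha}}(\mathcal{M})$ by definition of the notation. For the denominators, I claim that $V_{<\alpha_i,\boldsymbol{\alpha}_{\{i\}^c}}(\mathcal{M})\subseteq V_{<\boldsymbol{\alpha}}(\mathcal{M})$: indeed, each summand $V_{\beta,\boldsymbol{\alpha}_{\{i\}^c}}(\mathcal{M})$ with $\beta<\alpha_i$ equals $V_{\boldsymbol{\gamma}}(\mathcal{M})$ for the tuple $\boldsymbol{\gamma}$ with $\gamma_i=\beta<\alpha_i$ and $\gamma_j=\alpha_j$ for $j\neq i$; such a $\boldsymbol{\gamma}$ satisfies $\boldsymbol{\gamma}<\boldsymbol{\alpha}$ for the standard partial order on $\mathbb{C}^p$, so $V_{\boldsymbol{\gamma}}(\mathcal{M})\subseteq V_{<\boldsymbol{\alpha}}(\mathcal{M})$, and summing over $\beta<\alpha_i$ yields the inclusion. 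Having a common numerator and a smaller denominator on the left produces a natural surjection
\[
V_{\boldsymbol{\alpha}}(\mathcal{M})/V_{<\alpha_i,\boldsymbol{\alpha}_{\{i\}^c}}(\mathcal{M})\twoheadrightarrow V_{\boldsymbol{\alpha}}(\mathcal{M})/V_{<\boldsymbol{\alpha}}(\mathcal{M})=\textup{gr}_{\boldsymbol{\alpha}}(\mathcal{M}).
\]

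Finally I would note that this surjection is compatible with the action of $E_i$. Since $E_i=t_i\partial_i\in V_{\boldsymbol{0}}\mathcal{D}_X$ preserves the whole $V$-multifiltration, it induces endomorphisms on all the subquotients above, and the quotient map is $E_i$-equivariant, hence so is $(E_i+\alpha_i+1)$. As a nilpotent endomorphism remains nilpotent on any equivariant quotient, the nilpotency of $(E_i+\alpha_i+1)$ furnished by Proposition~\ref{nilp} on the source passes to $\textup{gr}_{\boldsymbol{\alpha}}(\mathcal{M})$, which is exactly the assertion. There is no real obstacle here: all the analytic content sits in Proposition~\ref{nilp}, and the only point requiring care is the denominator inclusion, which is a purely order-theoretic comparison between the single-index strict inequality and the standard partial order on $\mathbb{C}^p$.
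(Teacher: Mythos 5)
Your proof is correct and is essentially the paper's own argument made explicit: the paper deduces the corollary \emph{immédiatement} from Proposition~\ref{nilp}, and that immediate step is exactly what you spell out — take $\beta=\alpha_i$, observe the common numerator $V_{\boldsymbol{\alpha}}(\mathcal{M})=V_{\alpha_i,\boldsymbol{\alpha}_{\{i\}^c}}(\mathcal{M})$, the inclusion of denominators $V_{<\alpha_i,\boldsymbol{\alpha}_{\{i\}^c}}(\mathcal{M})\subseteq V_{<\boldsymbol{\alpha}}(\mathcal{M})$ coming from the standard partial order, and pass the nilpotency of $(E_i+\alpha_i+1)$ through the resulting $E_i$-equivariant surjection onto $\textup{gr}_{\boldsymbol{\alpha}}(\mathcal{M})$. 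Nothing further is needed.
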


\begin{de}

Étant donné un couple $(\boldsymbol{H},\mathcal{M})$ sans pente, on définit les \emph{cycles proches algébriques} de $\mathcal{M}$ relatifs à la famille d'hypersurfaces $\boldsymbol{H}$ de la manière suivante
\[\Psi_{\boldsymbol{H}}\mathcal{M}:=
\displaystyle\bigoplus_{\boldsymbol{\alpha}\in[-1,0[^p}\textup{gr}_{\boldsymbol{\alpha}}(\mathcal{M}).
\]
C'est un $\textup{gr}_{\mathbf{0}}^{V}\mathcal{D}_X$-modules cohérent. Or, si l'on note $X_0:=\bigcap_{1\leq i\leq p}H_i$, on a $\textup{gr}_{\mathbf{0}}^{V}\mathcal{D}_X\simeq \mathcal{D}_{X_0}[E_1,...,E_p]$. Le corollaire \ref{cornilp} implique ainsi que $\Psi_{\boldsymbol{H}}\mathcal{M}$ est un $\mathcal{D}_{X_0}$-module cohérent. Les cycles proches algébriques sont munis d'endomorphismes de monodromie pour $1\leq i\leq p$
\[T_i:=\exp (-2i\pi E_i).\]

\end{de}

La proposition suivante est une conséquence du corollaire \ref{commutmultgradV}

\begin{proposition}\label{maisonalg}

Soit $I\subset \{1,...,p\}$, on a alors un morphisme naturel, fonctoriel en $\mathcal{M}$, de $\textup{gr}_{\mathbf{0}}^{V}\mathcal{D}_X$-modules
\[\Psi_{\mathbf{H}}\mathcal{M}\to \Psi_{\mathbf{H}_I}(\Psi_{\mathbf{H}_{I^c}}\mathcal{M})
\]
qui est un isomorphisme si le couple $(\boldsymbol{H},\mathcal{M})$ est sans pente.

\end{proposition}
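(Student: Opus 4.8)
L'idée est de ramener cette proposition au corollaire \ref{commutmultgradV}, qui identifie le gradué multiple $\textup{gr}_{\boldsymbol{\alpha}}(\mathcal{M})$ à l'itéré des gradués simples pris dans n'importe quel ordre. Je commencerais par construire le morphisme naturel. Pour $\boldsymbol{\alpha}\in[-1,0[^p$, l'identification \eqref{VfiltVfilt} donne $V_{\boldsymbol{\alpha}}(\mathcal{M})=V_{\boldsymbol{\alpha}_I}^{\boldsymbol{H}_I}(\mathcal{M})\cap V_{\boldsymbol{\alpha}_{I^c}}^{\boldsymbol{H}_{I^c}}(\mathcal{M})$, d'où une inclusion $V_{\boldsymbol{\alpha}}(\mathcal{M})\subseteq V_{\boldsymbol{\alpha}_{I^c}}^{\boldsymbol{H}_{I^c}}(\mathcal{M})$. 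En composant avec la projection sur $\textup{gr}_{\boldsymbol{\alpha}_{I^c}}^{\boldsymbol{H}_{I^c}}(\mathcal{M})$ et en utilisant l'exactitude du foncteur $V^{\boldsymbol{H}_I}$-filtration (proposition \ref{exactV}), qui entraîne que l'image tombe dans $V_{\boldsymbol{\alpha}_I}^{\boldsymbol{H}_I}\big(\textup{gr}_{\boldsymbol{\alpha}_{I^c}}^{\boldsymbol{H}_{I^c}}(\mathcal{M})\big)$, j'obtiendrais un morphisme $V_{\boldsymbol{\alpha}}(\mathcal{M})\to \textup{gr}_{\boldsymbol{\alpha}_I}^{\boldsymbol{H}_I}\big(\textup{gr}_{\boldsymbol{\alpha}_{I^c}}^{\boldsymbol{H}_{I^c}}(\mathcal{M})\big)$, manifestement fonctoriel en $\mathcal{M}$.

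Il faudrait ensuite vérifier que $V_{<\boldsymbol{\alpha}}(\mathcal{M})$ est envoyé sur zéro, afin de factoriser ce morphisme en $\textup{gr}_{\boldsymbol{\alpha}}(\mathcal{M})\to \textup{gr}_{\boldsymbol{\alpha}_I}^{\boldsymbol{H}_I}\big(\textup{gr}_{\boldsymbol{\alpha}_{I^c}}^{\boldsymbol{H}_{I^c}}(\mathcal{M})\big)$. Pour cela je distinguerais, pour $\boldsymbol{\beta}<\boldsymbol{\alpha}$, le cas où la décroissance a lieu dans une direction de $I^c$ (alors $V_{\boldsymbol{\beta}}(\mathcal{M})$ meurt déjà dans $\textup{gr}_{\boldsymbol{\alpha}_{I^c}}^{\boldsymbol{H}_{I^c}}(\mathcal{M})$) du cas où elle n'a lieu que dans une direction de $I$ (alors l'image tombe dans $V_{<\boldsymbol{\alpha}_I}^{\boldsymbol{H}_I}$ et meurt dans le gradué suivant). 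D'autre part, la propriété sans pente passant aux gradués via les propositions \ref{compomult} et \ref{exactV}, le couple $(\boldsymbol{H}_I,\Psi_{\boldsymbol{H}_{I^c}}\mathcal{M})$ est sans pente et $\Psi_{\boldsymbol{H}_I}(\Psi_{\boldsymbol{H}_{I^c}}\mathcal{M})$ est bien défini. L'additivité des foncteurs $\textup{gr}^{\boldsymbol{H}_I}$ permet alors de développer
\[\Psi_{\boldsymbol{H}_I}(\Psi_{\boldsymbol{H}_{I^c}}\mathcal{M})=\bigoplus_{\boldsymbol{\alpha}_I\in[-1,0[^I}\textup{gr}_{\boldsymbol{\alpha}_I}^{\boldsymbol{H}_I}\Big(\bigoplus_{\boldsymbol{\alpha}_{I^c}\in[-1,0[^{I^c}}\textup{gr}_{\boldsymbol{\alpha}_{I^c}}^{\boldsymbol{H}_{I^c}}(\mathcal{M})\Big)=\bigoplus_{\boldsymbol{\alpha}\in[-1,0[^p}\textup{gr}_{\boldsymbol{\alpha}_I}^{\boldsymbol{H}_I}\textup{gr}_{\boldsymbol{\alpha}_{I^c}}^{\boldsymbol{H}_{I^c}}(\mathcal{M}),\]
et la somme sur $\boldsymbol{\alpha}$ des morphismes précédents fournit le morphisme naturel annoncé $\Psi_{\boldsymbol{H}}\mathcal{M}\to\Psi_{\boldsymbol{H}_I}(\Psi_{\boldsymbol{H}_{I^c}}\mathcal{M})$.

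Enfin, sous l'hypothèse sans pente, le corollaire \ref{commutmultgradV} identifie chaque facteur $\textup{gr}_{\boldsymbol{\alpha}_I}^{\boldsymbol{H}_I}\textup{gr}_{\boldsymbol{\alpha}_{I^c}}^{\boldsymbol{H}_{I^c}}(\mathcal{M})$ à $\textup{gr}_{\boldsymbol{\alpha}}(\mathcal{M})$, ce qui fait du morphisme un isomorphisme. Le point le plus délicat me paraît être de s'assurer que le morphisme explicite issu des inclusions de filtrations coïncide bien, terme à terme, avec l'identification abstraite fournie par le corollaire \ref{commutmultgradV} : c'est cette compatibilité qui garantit que le morphisme naturel, et non un autre, est un isomorphisme. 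La vérification de l'annulation de $V_{<\boldsymbol{\alpha}}(\mathcal{M})$, qui repose sur l'emploi répété de l'exactitude (proposition \ref{exactV}) et des identifications \eqref{VfiltVfilt}, constitue l'essentiel du travail technique.
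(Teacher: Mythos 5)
Your proposal is correct and follows essentially the same route as the paper, which deduces the proposition directly from Corollary \ref{commutmultgradV} (the Saito-type compatibility of the canonical $V$-filtrations established in Proposition \ref{compatVfiltr}). Your explicit construction of the natural morphism via \eqref{VfiltVfilt} and the exactness of Proposition \ref{exactV}, together with the factorization through $V_{<\boldsymbol{\alpha}}(\mathcal{M})$, simply fills in details that the paper leaves implicit.
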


Dans le cas général $\boldsymbol{f}:X\to \mathbb{C}^p$, l'inclusion du graphe de $\boldsymbol{f}$ permet de définir les cycles proches algébriques.

\begin{de}

Considérons le diagramme
\[\xymatrix{X \ar[r]^-{i_{\boldsymbol{f}}} \ar[rd]^{\boldsymbol{f}} & X\times \mathbb{C}^p \ar[d]^{\boldsymbol{\pi}=(\pi_1,...,\pi_p)} \\
 & \mathbb{C}^p.}\]
où $i_{\boldsymbol{f}}$ est le graphe de $\boldsymbol{f}$. Soit $H_i:=\pi_i^{-1}(0)$. D'après ce qui précède, si le couple $(\boldsymbol{H},{i_{\boldsymbol{f}}}_+\mathcal{M})$ est sans pente, alors $\Psi_{\boldsymbol{H}}{i_{\boldsymbol{f}}}_+\mathcal{M}$ est un $\mathcal{D}_{X\times 0}$-module cohérent à support $\{(\boldsymbol{x},0)|\boldsymbol{f}(\boldsymbol{x})=0\}$. On peut le voir comme un $\mathcal{D}_{X}$-module cohérent à support $\boldsymbol{f}^{-1}(0)$, on le note alors $\Psi_{\boldsymbol{f}}^{\text{alg}}\mathcal{M}$.

\end{de}

On déduit de la proposition \ref{maisonalg} l'isomorphisme 
\[\Psi_{\boldsymbol{f}}^{\text{alg}}\mathcal{M}\to \Psi_{\boldsymbol{f_I}}^{\text{alg}}(\Psi_{\boldsymbol{f_{I^c}}}^{\text{alg}}\mathcal{M}).
\]

\subsection{Cycles proches topologiques}

Ici on définit le foncteur cycles proches topologiques associé à une fonction $f:X \to \mathbb{C}^p$ et appliqué à la catégorie des complexes de faisceaux à cohomologie $\mathbb{C}$-constructible.

\begin{de}
Considérons le diagramme suivant où les carrés sont cartésiens:

\[\xymatrix{ f^{-1}(0) \ar[r]^i \ar[d] & X \ar[d]^f & X^* \ar[l]_j  \ar[d]^{\restriction{f}{X^*}} & \widetilde{X} \ar[l]_p \ar[d]^{\tilde{f}}\\
\{0\} \ar[r]^i & \mathbb{C}^p & (\mathbb{C}^*)^p \ar[l]_j & \widetilde{(\mathbb{C}^*)^p}. \ar[l]_p}\]
Ici $X^*=X-F^{-1}(0)$ avec $F=f_1...f_p$ et $\widetilde{(\mathbb{C}^*)^p}$ est le revêtement universel de $(\mathbb{C}^*)^p$. 

Si $\mathcal{F}$ est un complexe de faisceaux à cohomologie $\mathbb{C}$-constructible, on définit:
\[\Psi_f\mathcal{F}:=i^{-1}\boldsymbol{R}j_*p_*p^{-1}j^{-1}\mathcal{F}\]
c'est le \emph{foncteur cycles proches}. On peut identifier le morphisme $\widetilde{(\mathbb{C}^*)^p} \to (\mathbb{C}^*)^p$ à

\[\begin{array}{cccc}
     \exp :& \mathbb{C}^p & \to & (\mathbb{C}^*)^p\\
       & (z_1,...,z_p) & \mapsto & (e^{2i\pi z_1},...,e^{2i\pi z_p}).
     \end{array}\] 
Pour $1\leq i\leq p$ les translations 
\[\begin{array}{cccc}
     \tau_i :& \widetilde{(\mathbb{C}^*)^p} & \to & \widetilde{(\mathbb{C}^*)^p} \\
       & (z_1,...,z_i,...,z_p) & \mapsto & (z_1,...,z_i+1,...,z_p).
     \end{array}\] 
permettent d'induire des endomorphismes de monodromie $T_i:\Psi_f\mathcal{F}\to \Psi_f\mathcal{F}$.      
     
\end{de}

Supposons que les $f_i$ définissent un diviseur à croisements normaux $\mathbf{H}$ où $H_i=\{f_i=0\}$ et que $\mathcal{F}=\mathbf{DR}(\mathcal{M})$. Dans \cite{Maisonobe} Ph. Maisonobe démontre la proposition suivante

\begin{proposition}\label{maisontopo}

Soit $I\subset \{1,...,p\}$, il existe un morphisme naturel 
\begin{equation}
\Psi_f\mathcal{F}\to \Psi_{f_I}(\Psi_{f_{I^c}}\mathcal{F}).
\label{coupeproche}\end{equation}
De plus si le couple $(\mathbf{H},\mathcal{M})$ est sans pente alors ce morphisme est un isomorphisme.

\end{proposition}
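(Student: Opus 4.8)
Le plan est de construire d'abord le morphisme \eqref{coupeproche} pour un complexe $\mathcal{F}$ à cohomologie $\mathbb{C}$-constructible quelconque, puis de montrer que c'est un isomorphisme sous l'hypothèse sans pente en se ramenant à un énoncé local sur $f^{-1}(0)$. Pour la construction, j'utiliserais la factorisation du revêtement universel de $(\mathbb{C}^*)^p=(\mathbb{C}^*)^I\times(\mathbb{C}^*)^{I^c}$ à travers le revêtement partiel $(\mathbb{C}^*)^I\times\widetilde{(\mathbb{C}^*)^{I^c}}$ obtenu en ne relevant que les directions indexées par $I^c$. En tirant cette factorisation en arrière par $\restriction{f}{X^*}$ on obtient un diagramme dont tous les carrés sont cartésiens et qui fait apparaître l'étape intermédiaire associée à $\Psi_{f_{I^c}}$, puis l'étape associée à $\Psi_{f_I}$ sur $f_{I^c}^{-1}(0)$. À partir de l'écriture $\Psi_f\mathcal{F}=i^{-1}\mathbf{R}j_*p_*p^{-1}j^{-1}\mathcal{F}$, le morphisme cherché s'obtient en composant les morphismes d'adjonction et les morphismes de changement de base attachés à ces carrés. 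Cette construction est fonctorielle en $\mathcal{F}$ et n'utilise aucune hypothèse: c'est la partie formelle de l'énoncé.

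Pour l'isomorphisme, la question est locale sur $f^{-1}(0)$ et, les $f_i$ définissant un diviseur à croisements normaux, je me placerais dans le modèle local où $f_i=t_i$. Il suffit alors de vérifier que le morphisme induit un isomorphisme sur les fibres en chaque point $x_0\in f^{-1}(0)$. La fibre de $\Psi_f\mathcal{F}$ en $x_0$ s'identifie à la cohomologie de $\mathcal{F}$ restreinte à l'intersection d'un petit polydisque avec $X^*$ et relevée au revêtement universel, c'est-à-dire à une fibre de Milnor à $p$ variables; la fibre du terme itéré se calcule en effectuant cette même opération en deux temps, d'abord dans les directions $I^c$ puis dans les directions $I$. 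Il reste donc à comparer la fibre totale à la fibre itérée.

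C'est dans cette comparaison que réside la difficulté et qu'intervient de façon essentielle l'hypothèse sans pente. Portant sur $\text{car}(\mathcal{F})$, elle contrôle la position du support singulier de $\mathcal{F}$ relativement à la fibration définie par $\boldsymbol{f}$ et assure que les fibrations de Milnor partielles relatives aux directions $I$ et $I^c$ sont compatibles: les contributions des deux groupes de directions ne se mélangent pas, de sorte que le calcul itéré restitue la fibre totale et que le morphisme de changement de base est un isomorphisme. C'est le pendant topologique de la commutation des gradués successifs de la $V$-multifiltration (corollaire \ref{commutmultgradV}, et proposition \ref{maisonalg} pour l'énoncé algébrique correspondant); c'est l'étape qui concentre toute la difficulté et requiert réellement l'absence de pente.
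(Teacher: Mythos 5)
Il faut d'abord noter que l'article ne démontre pas cette proposition : il la cite telle quelle de \cite{Maisonobe} (« Dans \cite{Maisonobe} Ph.\ Maisonobe démontre la proposition suivante »). Votre proposition doit donc être jugée comme une preuve autonome, et à ce titre elle présente une lacune réelle. La première partie (construction du morphisme \eqref{coupeproche}) est correcte dans son principe : factoriser le revêtement universel de $(\mathbb{C}^*)^p$ par le revêtement partiel $(\mathbb{C}^*)^I\times\widetilde{(\mathbb{C}^*)^{I^c}}$, tirer en arrière par $\restriction{f}{X^*}$, puis composer morphismes d'adjonction et de changement de base est bien le squelette formel attendu, et cette étape n'utilise en effet aucune hypothèse (il faudrait seulement être plus soigneux sur le fait que $\Psi_{f_{I^c}}\mathcal{F}$ se calcule sur le complémentaire de $F_{I^c}^{-1}(0)$ et non de $F^{-1}(0)$, de sorte que les carrés à comparer ne sont pas tous cartésiens et que certaines flèches sont de vrais morphismes de changement de base, non inversibles en général).

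La lacune est dans la seconde partie, qui est précisément celle qui porte tout le contenu de l'énoncé. Vous réduisez correctement au modèle local $f_i=t_i$ et à une comparaison fibre à fibre entre la fibre de Milnor « totale » et la fibre « itérée », mais l'argument qui suit --- « l'hypothèse sans pente contrôle la position du support singulier \ldots\ et assure que les fibrations de Milnor partielles sont compatibles : les contributions des deux groupes de directions ne se mélangent pas » --- n'est pas une démonstration : c'est une reformulation de la conclusion recherchée. Tout le travail consiste justement à déduire de la condition géométrique sur $\textup{car}(\mathcal{F})$ cette compatibilité topologique, ce qui demande des arguments effectifs (chez Maisonobe : stratifications adaptées, conditions de type Thom et trivialité topologique locale permettant de comparer les deux calculs de fibres, ou un dévissage passant par le côté $\mathcal{D}$-module). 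L'analogie que vous invoquez avec le corollaire \ref{commutmultgradV} et la proposition \ref{maisonalg} ne peut pas servir de substitut : ces énoncés concernent la $V$-multifiltration de $\mathcal{M}$ et ne se transportent pas gratuitement au foncteur topologique $\Psi_f$ --- c'est d'ailleurs exactement l'objet du théorème \ref{thcomparaison} de l'article de relier les deux côtés, et il serait circulaire de l'utiliser ici. En l'état, votre texte est un plan de preuve dont l'étape décisive est laissée en assertion.
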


\subsection{Fonctions de classe de Nilsson}

On se place ici dans le cas d'une famille d'hypersurfaces qui forment un diviseur à croisements normaux, quitte à diminuer $X$, on suppose qu'il existe un système de coordonnées $(\boldsymbol{x},t_1,...,t_p)$ tel que pour tout $1\leq i\leq p$, l'hypersurface $H_i$ ait pour équation $t_i=0$. On note 
 \[\begin{array}{cccc}
     \pi:& X & \to & \mathbb{C}^p\\
       & (\boldsymbol{x},t_1,...,t_p) & \mapsto & (t_1,...,t_p).
     \end{array}\] 
     
\begin{de}\label{Nilsson}

 Soit $\boldsymbol{\alpha}\in {[-1,0[}^p$ et ${\boldsymbol{k}}\in \mathbb{N}^p$. On note $\mathcal{N}_{\boldsymbol{\alpha},{\boldsymbol{k}}}$ la connexion méromorphe sur $\mathbb{C}^p$:
 \[\mathcal{N}_{\boldsymbol{\alpha},{\boldsymbol{k}}}=
 \bigoplus_{0 \leq \boldsymbol{\ell}\leq {\boldsymbol{k}}}\mathcal{O}_{\mathbb{C}^p}[\frac{1}{z_1...z_p}]e_{\boldsymbol{\alpha},\boldsymbol{\ell}}\]
avec la structure de $\mathcal{D}$-module donnée par la formule
\[z_i\partial_{z_i}e_{\boldsymbol{\alpha},\boldsymbol{\ell}}  = (\alpha_i+1)e_{\boldsymbol{\alpha},\boldsymbol{\ell}}+e_{\boldsymbol{\alpha},\boldsymbol{\ell}-\boldsymbol{1}_i}.\]
On définit $T_i$ le morphisme de monodromie d'indice $i$ par la formule 

\[T_ie_{\boldsymbol{\alpha},\boldsymbol{\ell}}  = \exp(2i\pi(\alpha_i+1))\sum_{0\leq m\leq \ell_i}\frac{(2i\pi)^m}{m!}e_{\boldsymbol{\alpha},\boldsymbol{\ell}-m.\boldsymbol{1}_i}.
\]
 
\end{de}

\begin{Rem}

Pour se souvenir de la structure de $\mathcal{D}$-module et de la monodromie il faut remarquer que la section $e_{\boldsymbol{\alpha},\boldsymbol{\ell}}$ se comporte comme la fonction multiforme $\boldsymbol{z}^{\boldsymbol{\alpha+1}}\frac{\log^{\ell_1}z_1}{\ell_1!}...\frac{\log^{\ell_p}z_p}{\ell_p!}$.

\end{Rem}

\begin{de}
 
Soit $\mathcal{M}$ un $\mathcal{D}_X$-module tel que le couple $(\boldsymbol{H},\mathcal{M})$ soit sans pente. On définit: 
\[\mathcal{M}_{\boldsymbol{\alpha},{\boldsymbol{k}}}=\mathcal{M}\otimes_{\pi^{-1}\mathcal{O}_{\mathbb{C}^p}}
\pi^{-1}\left(\mathcal{N}_{\boldsymbol{\alpha},{\boldsymbol{k}}}\right)=
\mathcal{M}[\frac{1}{t_1...t_p}]\otimes_{\pi^{-1}\mathcal{O}_{\mathbb{C}^p}}\pi^{-1}\left(\mathcal{N}_{\boldsymbol{\alpha},{\boldsymbol{k}}}\right).\]
D'autre part on a 
\[\mathcal{M}_{\boldsymbol{\alpha},{\boldsymbol{k}}}=\mathcal{M}\otimes_{\mathcal{O}_{X}}\pi^{+}\left(\mathcal{N}_{\boldsymbol{\alpha},{\boldsymbol{k}}}\right)\]
où $\pi^+$ est l'image inverse dans la catégorie des $\mathcal{D}$-modules. Ceci permet de munir $\mathcal{M}_{\boldsymbol{\alpha},{\boldsymbol{k}}}$ 
d'une structure naturelle de $\mathcal{D}_X$-module. Notons $Y:=\displaystyle\bigcap_{1\leq i\leq p} H_i$. La restriction de $\mathcal{M}_{\boldsymbol{\alpha},{\boldsymbol{k}}}$ à $Y$ est munie 
d'endomorphismes $T_i$ induits par les morphismes de monodromie de $\mathcal{N}_{\boldsymbol{\alpha},{\boldsymbol{k}}}$ et définis par:
\[T_i(m\otimes e_{\boldsymbol{\alpha},\boldsymbol{\ell}})=m\otimes T_ie_{\boldsymbol{\alpha},\boldsymbol{\ell}}.\]
     
\end{de}

\begin{proposition}\label{VNilsson}
 
Soit $\boldsymbol{\alpha}\in {[-1,0[}^p$ et ${\boldsymbol{k}}\in \mathbb{N}^p$ et $\mathcal{M}$ un $\mathcal{D}_X$-module tel que le couple $(\boldsymbol{H},\mathcal{M})$ soit sans pente.
Alors le couple $(\boldsymbol{H},\mathcal{M}_{\boldsymbol{\alpha},{\boldsymbol{k}}})$ est sans pente. De plus, pour tout $\boldsymbol{\beta}\in\mathbb{C}^p$, on a:
\[V_{\boldsymbol{\beta}}(\mathcal{M}_{\boldsymbol{\alpha},{\boldsymbol{k}}})=\bigoplus_{\boldsymbol{0}\leq\boldsymbol{\ell}\leq{\boldsymbol{k}}}V_{\boldsymbol{\alpha}+\boldsymbol{\beta}+\boldsymbol{1}}
 \left(\mathcal{M}[\frac{1}{t_1...t_p}]\right)e_{\boldsymbol{\alpha},\boldsymbol{\ell}}.
\]

\end{proposition}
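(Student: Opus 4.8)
The plan is to exhibit the announced right-hand side as a good $V$-multifiltration and then to identify it with the canonical one via the uniqueness characterisation of the $V$-multifiltration, which will simultaneously yield the \emph{sans pente} property. Write $\mathcal{M}[\tfrac{1}{\boldsymbol{t}}]=\mathcal{M}(*(H_1\cup\cdots\cup H_p))$; by le corollaire \ref{loca} the couple $(\boldsymbol{H},\mathcal{M}[\tfrac{1}{\boldsymbol{t}}])$ is sans pente, so its canonical multifiltration $V_{\boldsymbol{\gamma}}(\mathcal{M}[\tfrac{1}{\boldsymbol{t}}])$ is defined and coherent (proposition \ref{propmult3}). I set
\[W_{\boldsymbol{\beta}}:=\bigoplus_{\boldsymbol{0}\leq\boldsymbol{\ell}\leq\boldsymbol{k}}V_{\boldsymbol{\alpha}+\boldsymbol{\beta}+\boldsymbol{1}}\big(\mathcal{M}[\tfrac{1}{\boldsymbol{t}}]\big)\,e_{\boldsymbol{\alpha},\boldsymbol{\ell}},\]
the candidate for $V_{\boldsymbol{\beta}}(\mathcal{M}_{\boldsymbol{\alpha},\boldsymbol{k}})$, and I aim to prove $W_\bullet=V_\bullet(\mathcal{M}_{\boldsymbol{\alpha},\boldsymbol{k}})$.

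First I would record the action of the basic operators. Identifying $t_i$ with $\pi^{-1}z_i$ and using the connexion of $\mathcal{N}_{\boldsymbol{\alpha},\boldsymbol{k}}$, a direct computation gives
\[E_i(m\otimes e_{\boldsymbol{\alpha},\boldsymbol{\ell}})=(E_i m)\otimes e_{\boldsymbol{\alpha},\boldsymbol{\ell}}+(\alpha_i+1)\,m\otimes e_{\boldsymbol{\alpha},\boldsymbol{\ell}}+m\otimes e_{\boldsymbol{\alpha},\boldsymbol{\ell}-\boldsymbol{1}_i},\]
together with $t_i(m\otimes e_{\boldsymbol{\alpha},\boldsymbol{\ell}})=(t_i m)\otimes e_{\boldsymbol{\alpha},\boldsymbol{\ell}}$ and, since $\partial_i e_{\boldsymbol{\alpha},\boldsymbol{\ell}}=t_i^{-1}\big((\alpha_i+1)e_{\boldsymbol{\alpha},\boldsymbol{\ell}}+e_{\boldsymbol{\alpha},\boldsymbol{\ell}-\boldsymbol{1}_i}\big)$,
\[\partial_i(m\otimes e_{\boldsymbol{\alpha},\boldsymbol{\ell}})=\big(\partial_i m+(\alpha_i+1)t_i^{-1}m\big)\otimes e_{\boldsymbol{\alpha},\boldsymbol{\ell}}+t_i^{-1}m\otimes e_{\boldsymbol{\alpha},\boldsymbol{\ell}-\boldsymbol{1}_i}.\]
Because $t_i^{-1}$ sends $V_{\boldsymbol{\gamma}}(\mathcal{M}[\tfrac{1}{\boldsymbol{t}}])$ onto $V_{\boldsymbol{\gamma}-\boldsymbol{1}_i}$ and $t_i$ onto $V_{\boldsymbol{\gamma}+\boldsymbol{1}_i}$ (here $t_i$ is invertible on the localised module, hence bijective on the relevant $V$-steps), while $E_i$ and the derivations in $\boldsymbol{x}$ preserve each $V_{\boldsymbol{\gamma}}$, the three formulas show that $V_{\boldsymbol{0}}\mathcal{D}_X$ stabilises each $W_{\boldsymbol{\beta}}$ and more generally $V_{\boldsymbol{k}}\mathcal{D}_X\cdot W_{\boldsymbol{\beta}}\subset W_{\boldsymbol{\beta}+\boldsymbol{k}}$; note that although the $\mathcal{D}_X$-structure couples the summands through the nilpotent shift $e_{\boldsymbol{\alpha},\boldsymbol{\ell}}\mapsto e_{\boldsymbol{\alpha},\boldsymbol{\ell}-\boldsymbol{1}_i}$, this shift never raises the index $\boldsymbol{\beta}$. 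Goodness and coherence of $W_\bullet$ then follow from the coherence of $V_\bullet(\mathcal{M}[\tfrac{1}{\boldsymbol{t}}])$ and the finiteness of $\{e_{\boldsymbol{\alpha},\boldsymbol{\ell}}\}_{\boldsymbol{0}\leq\boldsymbol{\ell}\leq\boldsymbol{k}}$.

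The decisive step is the computation of the graded pieces. For fixed $i$ and $\beta_i$, the direct-sum shape of $W_\bullet$ gives
\[\textup{gr}^{W,i}_{\beta_i}(\mathcal{M}_{\boldsymbol{\alpha},\boldsymbol{k}})=\bigoplus_{\boldsymbol{0}\leq\boldsymbol{\ell}\leq\boldsymbol{k}}\textup{gr}^{V,i}_{\alpha_i+\beta_i+1}\big(\mathcal{M}[\tfrac{1}{\boldsymbol{t}}]\big)\,e_{\boldsymbol{\alpha},\boldsymbol{\ell}}.\]
By la proposition \ref{nilp} applied to $\mathcal{M}[\tfrac{1}{\boldsymbol{t}}]$, the operator $E_i+(\alpha_i+\beta_i+1)+1$ is nilpotent on the right-hand factor. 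Combining this with the action formula for $E_i$, the operator $E_i+\beta_i+1$ on $\textup{gr}^{W,i}_{\beta_i}$ decomposes as the sum of the nilpotent operator $E_i+\alpha_i+\beta_i+2$ coming from $\mathcal{M}[\tfrac{1}{\boldsymbol{t}}]$ and the nilpotent shift $m\otimes e_{\boldsymbol{\alpha},\boldsymbol{\ell}}\mapsto m\otimes e_{\boldsymbol{\alpha},\boldsymbol{\ell}-\boldsymbol{1}_i}$; as these commute, $E_i+\beta_i+1$ is nilpotent. This is exactly the property characterising the canonical $V$-multifiltration: it produces the polynômes de Bernstein--Sato required by la définition \ref{defmultispe} (whence $(\boldsymbol{H},\mathcal{M}_{\boldsymbol{\alpha},\boldsymbol{k}})$ is sans pente) and it pins down the filtration by the uniqueness of la proposition \ref{propmult2}. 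Therefore $W_\bullet=V_\bullet(\mathcal{M}_{\boldsymbol{\alpha},\boldsymbol{k}})$, which is the claimed formula.

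The main obstacle I expect is not any isolated computation but the bookkeeping around the half-open indexing and the localisation. One must check that the shift by $\boldsymbol{\alpha}+\boldsymbol{1}$ is compatible with the convention $\boldsymbol{\alpha}\in[-1,0[^p$; that $t_i^{-1}$ genuinely shifts the $V$-multifiltration of the localised module by $-\boldsymbol{1}_i$ (this rests on $t_i$ being invertible on $\mathcal{M}[\tfrac{1}{\boldsymbol{t}}]$ and bijective on each $V$-step); and that the nilpotency characterisation and the uniqueness statement are indeed available for the modules at hand, i.e. the analogues of les propositions \ref{propmult2} and \ref{propmult3}. Once these points are secured, the identification $W_\bullet=V_\bullet(\mathcal{M}_{\boldsymbol{\alpha},\boldsymbol{k}})$ and the \emph{sans pente} property follow at once.
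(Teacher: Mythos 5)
Your identification of the canonical multifiltration (the second half of the statement) follows the same route as the paper: exhibit the direct sum $W_\bullet$ as a good $V$-multifiltration whose Bernstein--Sato data have roots in the prescribed range, then invoke the uniqueness of proposition \ref{propmult2}. Your variant — deducing the Bernstein--Sato condition from the nilpotency of $E_i+\beta_i+1$ on the graded pieces, written as the sum of the commuting nilpotent operators coming from proposition \ref{nilp} applied to $\mathcal{M}[\frac{1}{t_1...t_p}]$ and from the shift $e_{\boldsymbol{\alpha},\boldsymbol{\ell}}\mapsto e_{\boldsymbol{\alpha},\boldsymbol{\ell}-\boldsymbol{1}_i}$ — is a clean substitute for the paper's inductive construction of the polynomials $b_i$ from the identity \eqref{calcul}. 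The genuine gap lies in the first assertion. You claim that the \emph{sans pente} property of $(\boldsymbol{H},\mathcal{M}_{\boldsymbol{\alpha},\boldsymbol{k}})$ drops out of the same computation via la définition \ref{defmultispe}; but that definition, and the uniqueness statement of proposition \ref{propmult2} that you use to pin down the filtration, are formulated for \emph{coherent} $\mathcal{D}_X$-modules, and nothing in your argument shows that $\mathcal{M}_{\boldsymbol{\alpha},\boldsymbol{k}}$ is $\mathcal{D}_X$-cohérent. This is not a formality: $\mathcal{M}_{\boldsymbol{\alpha},\boldsymbol{0}}$ is the twist of $\mathcal{M}[\frac{1}{t_1...t_p}]$ by $\boldsymbol{t}^{\boldsymbol{\alpha}+\boldsymbol{1}}$, and the conjugation $P\mapsto \boldsymbol{t}^{-(\boldsymbol{\alpha}+\boldsymbol{1})}P\,\boldsymbol{t}^{\boldsymbol{\alpha}+\boldsymbol{1}}$ sends $\partial_i$ to $\partial_i+(\alpha_i+1)t_i^{-1}$, which is not a section of $\mathcal{D}_X$, so coherence does not transfer through the twist; likewise, possessing a good $V$-multifiltration only makes a module locally finitely generated over $\mathcal{D}_X$, which does not imply coherence. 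The paper devotes the entire first half of its proof to exactly this point: reduction to $\boldsymbol{k}=\boldsymbol{0}$ via the exact sequences \eqref{exactm}, the $\boldsymbol{t}^{\boldsymbol{s}}$-calculus of lemme \ref{equiv} producing the functional equations \eqref{bleu}, and the choice of $\boldsymbol{k}_0$ (dictated by the integer roots of $b_i(s_i+\alpha_i+1)$) showing that $\mathcal{M}_{\boldsymbol{\alpha},\boldsymbol{0}}$ is locally generated over $\mathcal{D}_X$ by the single section $(m\otimes e_{\boldsymbol{\alpha},\boldsymbol{0}})\boldsymbol{t}^{-\boldsymbol{k}_0}$, hence coherent, and then sans pente by lemme \ref{equiv} again. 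You flag this very issue in your closing paragraph (availability of the uniqueness and nilpotency statements ``for the modules at hand'') but never supply the argument; without it, neither the sans pente assertion nor your appeal to uniqueness is justified.

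A smaller but real error: your shift directions are backwards for this paper's conventions. Since $V_{\boldsymbol{\gamma}}$ is defined by the condition that the roots of $b_{i,m}$ be $\geq -\gamma_i-1$, and $t_i\in V_{-\boldsymbol{1}_i}\mathcal{D}_X$, multiplication by $t_i$ sends $V_{\boldsymbol{\gamma}}(\mathcal{M}[\frac{1}{t_1...t_p}])$ into $V_{\boldsymbol{\gamma}-\boldsymbol{1}_i}$ (bijectively on the localised module), while $t_i^{-1}$ and $\partial_i$ raise the index by $\boldsymbol{1}_i$; one checks this on $\mathcal{O}_X$, where the section $t^k$ lies in $V_{-k-1}\setminus V_{<-k-1}$. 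With the directions as you state them, the containment $V_{\boldsymbol{k}}\mathcal{D}_X\cdot W_{\boldsymbol{\beta}}\subset W_{\boldsymbol{\beta}+\boldsymbol{k}}$ would not follow; with the corrected directions your computation does go through.
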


On commence par un lemme qui sera utile dans la démonstration de cette proposition.

\begin{de}

Soit $(\boldsymbol{x},t_1,...,t_p)$ un système de coordonnées locales où $t_i=0$ est une équation de $H_i$. Soit $\mathcal{M}[1/\boldsymbol{t},{\boldsymbol{s}}]\boldsymbol{t}^{\boldsymbol{s}}$ le $\mathcal{O}_X[{\boldsymbol{s}}]$-module isomorphe à $\mathcal{M}[1/\boldsymbol{t},{\boldsymbol{s}}]$ par l'application $m\mapsto m\boldsymbol{t}^{\boldsymbol{s}}$. Il est muni d'une structure naturelle de $\mathcal{D}_X[{\boldsymbol{s}}]$-module par la formule:
\[\partial_i(m\boldsymbol{t}^{\boldsymbol{s}}):=(\partial_im)\boldsymbol{t}^{\boldsymbol{s}}+(\frac{s_im}{t_i})\boldsymbol{t}^{\boldsymbol{s}} \]

\end{de}

\begin{lemma}\label{equiv}
Soit $m$ une section locale de $\mathcal{M}[1/\boldsymbol{t}]$ et $b(s)\in\mathbb{C}[s]$. Les conditions suivantes sont équivalentes:

\begin{enumerate}

\item $b(E_i)m \in V_{\boldsymbol{-1}_i}(\mathcal{D}_X)m$ 
\item $b(-s_i-1)m\boldsymbol{t}^{\boldsymbol{s}}\in \mathcal{D}_X[{\boldsymbol{s}}]t_im\boldsymbol{t}^{\boldsymbol{s}}$

\end{enumerate}

\end{lemma}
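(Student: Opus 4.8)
L'idée est de tout transporter dans $N:=\mathcal{M}[1/\boldsymbol{t},\boldsymbol{s}]\boldsymbol{t}^{\boldsymbol{s}}$ et de comparer les deux conditions d'appartenance au travers du seul élément $u:=m\boldsymbol{t}^{\boldsymbol{s}}$. La structure de $\mathcal{D}_X[\boldsymbol{s}]$-module fournit les deux calculs de base $E_iu=\big((E_i+s_i)m\big)\boldsymbol{t}^{\boldsymbol{s}}$ et $t_iu=(t_im)\boldsymbol{t}^{\boldsymbol{s}}$, d'où $g(E_i)u=\big(g(E_i+s_i)m\big)\boldsymbol{t}^{\boldsymbol{s}}$ pour tout polynôme $g$. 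Comme dans des coordonnées locales où $H_i=\{t_i=0\}$ on a $V_{-\boldsymbol{1}_i}\mathcal{D}_X=t_iV_{\boldsymbol{0}}\mathcal{D}_X$, la condition $1.$ se lit: il existe $R\in V_{\boldsymbol{0}}\mathcal{D}_X$ tel que $b(E_i)m=t_iRm$.

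On construit d'abord un « pont » reliant les deux polynômes. En appliquant $\partial_i$ à $t_iu$ on trouve $\partial_i(t_iu)=(E_i+1)u$, donc $(E_i+1)u\in\mathcal{D}_X[\boldsymbol{s}]\,t_iu$. Puisque $-1$ est racine de $g(X)-g(-1)$, on peut écrire $g(E_i)-g(-1)=(E_i+1)h(E_i)$, ce qui donne pour tout $g$
\[g(E_i)u\equiv g(-1)u \pmod{\mathcal{D}_X[\boldsymbol{s}]\,t_iu}.\]
En prenant $g(E_i)=b(E_i-s_i)$ (polynôme en $E_i$ à coefficients dans $\mathbb{C}[s_i]$) on obtient $b(E_i-s_i)u\equiv b(-s_i-1)u$, tandis que le premier calcul donne $b(E_i-s_i)u=(b(E_i)m)\boldsymbol{t}^{\boldsymbol{s}}$. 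La condition $2.$ équivaut donc à la condition $(\dagger)$: $(b(E_i)m)\boldsymbol{t}^{\boldsymbol{s}}\in\mathcal{D}_X[\boldsymbol{s}]\,t_iu$. En posant $n:=b(E_i)m\in\mathcal{M}[1/\boldsymbol{t}]$, tout le lemme se ramène à l'équivalence $(\dagger)\iff n\in t_iV_{\boldsymbol{0}}\mathcal{D}_Xm$.

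L'implication $n\in t_iV_{\boldsymbol{0}}\mathcal{D}_Xm\Rightarrow(\dagger)$ est la partie facile. On pose $\iota(m'):=(t_im')\boldsymbol{t}^{\boldsymbol{s}}$, de sorte que $\iota(m)=t_iu$. Les calculs $E_i(t_iu)=\big(t_i(E_i+s_i+1)m\big)\boldsymbol{t}^{\boldsymbol{s}}$, $E_j(t_iu)=\big(t_i(E_j+s_j)m\big)\boldsymbol{t}^{\boldsymbol{s}}$ pour $j\neq i$, $\partial_{x_j}(t_iu)=(t_i\partial_{x_j}m)\boldsymbol{t}^{\boldsymbol{s}}$ et $a\,(t_iu)=(t_iam)\boldsymbol{t}^{\boldsymbol{s}}$ (pour $a\in\mathcal{O}_X$) montrent que l'affectation $E_i\mapsto E_i-s_i-1$, $E_j\mapsto E_j-s_j$ ($j\neq i$), $\partial_{x_j}\mapsto\partial_{x_j}$, $a\mapsto a$, s'étend en un morphisme d'algèbres $\widetilde{(\cdot)}\colon V_{\boldsymbol{0}}\mathcal{D}_X\to\mathcal{D}_X[\boldsymbol{s}]$ vérifiant $\iota(Rm')=\widetilde{R}\,\iota(m')$. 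Ainsi, si $n=t_iRm$ avec $R\in V_{\boldsymbol{0}}\mathcal{D}_X$, alors $n\boldsymbol{t}^{\boldsymbol{s}}=\iota(Rm)=\widetilde{R}\,(t_iu)\in\mathcal{D}_X[\boldsymbol{s}]\,t_iu$, c'est-à-dire $(\dagger)$.

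La réciproque $(\dagger)\Rightarrow n\in t_iV_{\boldsymbol{0}}\mathcal{D}_Xm$ est le cœur de la preuve. Comme $t_i$ est inversible sur $\mathcal{M}[1/\boldsymbol{t}]$, on y a $\partial_i=t_i^{-1}E_i$, si bien que tout opérateur appliqué à $t_iu$ se réécrit comme une expression de Laurent finie $\sum_w t_i^{w}p_w$, où $p_w$ est polynomial en $E_i$, en $\boldsymbol{s}$ et en les opérateurs des autres variables. En reportant ceci dans $(\dagger)$ et en faisant passer chaque $t_i$ à gauche grâce à $E_it_i=t_i(E_i+1)$, la relation $n\boldsymbol{t}^{\boldsymbol{s}}=P(\boldsymbol{s})(t_iu)$ devient une identité dans $\mathcal{M}[1/\boldsymbol{t},\boldsymbol{s}]$,
\[n=\sum_{w}t_i^{\,w+1}\,q_w(E_i,\boldsymbol{s})\,m,\]
avec un nombre fini de $w\in\mathbb{Z}$. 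Les termes d'indice $w\geq 0$ appartiennent déjà à $t_iV_{\boldsymbol{0}}\mathcal{D}_X[\boldsymbol{s}]m$, et tout revient à éliminer ceux d'indice $w<0$. On procèderait par récurrence descendante sur $w_0:=\min\{w\}$, en utilisant que $(\boldsymbol{H},\mathcal{M})$ est sans pente: la $V$-filtration canonique le long de $H_i$ existe et, d'après la proposition \ref{nilp}, $E_i+\alpha_i+1$ est nilpotent sur le gradué correspondant le long de $H_i$. La multiplication par $t_i^{\,w_0+1}$ envoie le terme le plus bas dans le gradué le plus bas, où les autres termes s'annulent; comme $n$ est indépendant de $\boldsymbol{s}$, son image y est indépendante de $\boldsymbol{s}$, et le calcul de valeur propre $E_i\equiv-\alpha_i-1$ force la partie $\boldsymbol{s}$-dépendante de $q_{w_0}$ à agir par zéro, ce qui permet d'abaisser $w_0$. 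La principale difficulté est précisément ce dernier point: contrôler l'interaction du paramètre formel $\boldsymbol{s}$ avec la $V$-filtration pour garantir que les contributions en puissances négatives de $t_i$ sont effectivement superflues; c'est là qu'interviennent de façon essentielle la nilpotence de la proposition \ref{nilp} et la compatibilité des $V$-filtrations de la proposition \ref{compatVfiltr}.
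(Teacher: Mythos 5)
Votre réduction préliminaire et la direction facile sont correctes, et recoupent en fait la première moitié de la preuve du texte : votre <<pont>> $b(E_i-s_i)u\equiv b(-s_i-1)u \pmod{\mathcal{D}_X[\boldsymbol{s}]\,t_iu}$ est exactement la congruence $(b(E_i)m)\boldsymbol{t}^{\boldsymbol{s}}-b(-s_i-1)m\boldsymbol{t}^{\boldsymbol{s}}\in \mathcal{D}_X[{\boldsymbol{s}}]t_im\boldsymbol{t}^{\boldsymbol{s}}$ du texte, et votre morphisme de conjugaison $R\mapsto\widetilde{R}$ explicite proprement la récurrence que le texte laisse au lecteur pour $1\Rightarrow 2$. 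En revanche la direction $2\Rightarrow 1$, qui est le c\oe ur du lemme, n'est pas démontrée : votre dernier paragraphe est au conditionnel (<<on procèderait>>) et vous reconnaissez vous-même que <<la principale difficulté>> reste entière. Concrètement : (a) en transportant l'action des $\partial_j$ sur les coefficients, on obtient $t_j^{-1}(E_j+s_j)$ pour \emph{tous} les $j$, de sorte que les $q_w$ contiennent des puissances négatives des autres variables $t_j$ ; il est donc faux que les termes d'indice $w\geq 0$ appartiennent déjà à $t_iV_{\boldsymbol{0}}\mathcal{D}_X[\boldsymbol{s}]m$, et le problème d'élimination porte sur toutes les variables, pas seulement sur $t_i$ ; (b) la récurrence descendante n'est pas un argument : l'écriture $n=\sum_w t_i^{w+1}q_w(E_i,\boldsymbol{s})m$ n'est pas unique, et le passage au gradué ne fournit que l'appartenance du terme extrême à un cran strictement plus petit de la $V$-filtration, nullement sa réécriture avec une puissance de $t_i$ plus grande, ce qui est précisément ce qu'il faudrait établir.

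Par ailleurs, votre stratégie fait appel à l'hypothèse sans pente (propositions \ref{nilp} et \ref{compatVfiltr}), alors que le lemme est un énoncé purement formel : la preuve du texte ne l'utilise à aucun moment. Ce point n'est pas anodin : dans la démonstration de la proposition \ref{VNilsson}, la direction $2\Rightarrow 1$ du lemme est appliquée aux sections $m\otimes e_{\boldsymbol{\alpha},\boldsymbol{0}}$ du module $\mathcal{M}_{\boldsymbol{\alpha},\boldsymbol{0}}$, dont la propriété sans pente est exactement ce que l'on cherche à y établir ; une preuve du lemme reposant sur cette propriété rendrait cet usage circulaire. La preuve du texte procède autrement : elle met l'opérateur sous forme normale $\sum_{\boldsymbol{\alpha},{\boldsymbol{k}}}\partial^{\boldsymbol{\alpha}}{\boldsymbol{s}}^{\boldsymbol{k}}A_{\boldsymbol{\alpha},{\boldsymbol{k}}}$ avec $A_{\boldsymbol{\alpha},{\boldsymbol{k}}}$ sans aucun $\partial_j$, remplace les $s_j$ grâce à l'identité \eqref{sm} (qui convertit $s_i^k$ en $(-\partial_it_i)^k$ modulo des termes $\partial$-exacts), puis identifie les composantes au moyen de l'énoncé d'indépendance \eqref{sumalpha} : si $\sum_{\boldsymbol{\alpha}}\partial^{\boldsymbol{\alpha}}(m_{\boldsymbol{\alpha}}\boldsymbol{t}^{\boldsymbol{s}})=0$ alors tous les $m_{\boldsymbol{\alpha}}$ sont nuls. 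La composante en $\partial^{\boldsymbol{0}}$ donne alors directement $b(E_i)m\in V_{\boldsymbol{-1}_i}(\mathcal{D}_X)m$. C'est cet argument d'indépendance, entièrement algébrique et indépendant de toute $V$-filtration canonique, qui manque à votre proposition.
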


\begin{proof}

Montrons que \textit{1} implique \textit{2}. Dans $\mathcal{M}[1/\boldsymbol{t},{\boldsymbol{s}}]\boldsymbol{t}^{\boldsymbol{s}}$ on a l'égalité
\[(t_i\partial_im)\boldsymbol{t}^{\boldsymbol{s}}=(-s_i-1)m\boldsymbol{t}^{\boldsymbol{s}}+\partial_i(t_im\boldsymbol{t}^{\boldsymbol{s}}).\]
On montre alors par récurrence que pour tout $k$
\[((t_i\partial_i)^km)\boldsymbol{t}^{\boldsymbol{s}}-(-s_i-1)^km\boldsymbol{t}^{\boldsymbol{s}}\in \mathcal{D}_X[{\boldsymbol{s}}]t_im\boldsymbol{t}^{\boldsymbol{s}}.\]
On a donc pour tout polynôme $b(s)\in\mathbb{C}[s]$
\[(b(E_i)m)\boldsymbol{t}^{\boldsymbol{s}}-b(-s_i-1)m\boldsymbol{t}^{\boldsymbol{s}}\in \mathcal{D}_X[{\boldsymbol{s}}]t_im\boldsymbol{t}^{\boldsymbol{s}}.\]
D'autre part, si $b(E_i)m \in V_{\boldsymbol{-1}_i}(\mathcal{D}_X)m$ une récurrence permet de montrer que $(b(E_i)m)\boldsymbol{t}^{\boldsymbol{s}}\in\mathcal{D}_X[{\boldsymbol{s}}]
t_im\boldsymbol{t}^{\boldsymbol{s}}$ et on en déduit \textit{2}.

Montrons que \textit{2} implique \textit{1}. D'une part, on peut montrer par récurrence que pour tout $k\in\mathbb{N}$ et tout $1\leq \ell\leq k$, il existe $m_{k,\ell}\in\mathcal{M}[1/\boldsymbol{t}]$ satisfaisant à:
\begin{equation}\label{sm}
s_i^km\boldsymbol{t}^{\boldsymbol{s}}=((-\partial_it_i)^km)\boldsymbol{t}^{\boldsymbol{s}}+
\sum_{\ell=1}^k\partial_i^\ell(m_{k,\ell}\boldsymbol{t}^{\boldsymbol{s}}).
\end{equation}
D'autre part, en faisant opérer les $\partial^{\boldsymbol{\alpha}}=\partial_1^{\alpha_1}...\partial_p^{\alpha_p}$ et en annulant les coefficients du polynôme en les $s_i$ que l'on obtient, on peut montrer le résultat suivant:
\begin{equation}\label{sumalpha}
\left[\sum_{\boldsymbol{\alpha}}\partial^{\boldsymbol{\alpha}}(m_{\boldsymbol{\alpha}}\boldsymbol{t}^{\boldsymbol{s}})=0\right] \Rightarrow \left[m_{\boldsymbol{\alpha}}=0 ~~ \forall  \boldsymbol{\alpha}\right]
\end{equation}
pour une somme finie sur les $\boldsymbol{\alpha}$. Enfin, si l'on regarde plus précisément la récurrence faite dans la première partie de la démonstration on obtient
\[(b(E_i)m)\boldsymbol{t}^{\boldsymbol{s}}-b(-s_i-1)m\boldsymbol{t}^{\boldsymbol{s}}\in \partial_i\mathcal{D}_X[{\boldsymbol{s}}]t_im\boldsymbol{t}^{\boldsymbol{s}}.\]
L'hypothèse \textit{2} implique
\[b(-s_i-1)m\boldsymbol{t}^{\boldsymbol{s}}=\sum_{\boldsymbol{\alpha},{\boldsymbol{k}}}\partial^{\boldsymbol{\alpha}}{\boldsymbol{s}}^{\boldsymbol{k}}A_{\boldsymbol{\alpha},{\boldsymbol{k}}}t_im\boldsymbol{t}^{\boldsymbol{s}}\]
où $A_{\boldsymbol{\alpha},{\boldsymbol{k}}}$ est un opérateur différentiel indépendant des $\partial_i$ pour tout $1\leq i\leq p$. En utilisant l'égalité \eqref{sm} on peut substituer les $s_j$ et on obtient 
\[(b(E_i)m)\boldsymbol{t}^{\boldsymbol{s}}-\sum_{\boldsymbol{k}}\left[(-t_1\partial_1-1)^{k_1}...(-t_p\partial_p-1)^{k_p}A_{\boldsymbol{0},{\boldsymbol{k}}}t_im\right]\boldsymbol{t}^{\boldsymbol{s}}=\sum_{\boldsymbol{\alpha}>\boldsymbol{0}}\partial^{\boldsymbol{\alpha}}(m_{\boldsymbol{\alpha}}\boldsymbol{t}^{\boldsymbol{s}})\]
avec $m_{\boldsymbol{\alpha}}\in\mathcal{M}[1/\boldsymbol{t}]$. En utilisant \eqref{sumalpha} et le fait que $(-t_1\partial_1-1)^{k_1}...(-t_p\partial_p-1)^{k_p}A_{\boldsymbol{0},\boldsymbol{k}}t_i\in V_{\boldsymbol{-1}_i}(\mathcal{D}_X)$ on conclut que $b(E_i)m \in V_{\boldsymbol{-1}_i}(\mathcal{D}_X)m$.

\end{proof}

\begin{proof}[Démonstration de la proposition \ref{VNilsson}]
 
On commence par montrer que le couple $(\boldsymbol{H},\mathcal{M}_{\boldsymbol{\alpha},{\boldsymbol{k}}})$ est sans pente. Quelque soit $1\leq i\leq p$, le $\mathcal{D}_{\mathbb{C}^p}$-module 
$\mathcal{N}_{\boldsymbol{\alpha},{\boldsymbol{k}}}/\mathcal{N}_{\boldsymbol{\alpha},{\boldsymbol{k}}-\boldsymbol{1}_i}$ s'identifie à $\mathcal{N}_{\boldsymbol{\alpha},{\boldsymbol{k}}-k_i.\boldsymbol{1}_i}$ On a donc la suite exacte:
\[0 \to \mathcal{N}_{\boldsymbol{\alpha},{\boldsymbol{k}}-\boldsymbol{1}_i} \to \mathcal{N}_{\boldsymbol{\alpha},{\boldsymbol{k}}} \to  \mathcal{N}_{\boldsymbol{\alpha},{\boldsymbol{k}}-k_i.\boldsymbol{1}_i} \to 0.\]
Pour tout ${\boldsymbol{k}}\in \mathbb{N}^p$ le $\pi^{-1}\mathcal{O}_{\mathbb{C}^p}$-module $\pi^{-1}\mathcal{N}_{\boldsymbol{\alpha},{\boldsymbol{k}}}$ est à fibres plates car libres,
il est donc acyclique pour le foncteur de produit tensoriel par $\mathcal{M}[\frac{1}{t_1...t_p}]$ et on a la suite exacte:
\begin{equation}\label{exactm}
0 \to \mathcal{M}_{\boldsymbol{\alpha},{\boldsymbol{k}}-\boldsymbol{1}_i} \to \mathcal{M}_{\boldsymbol{\alpha},{\boldsymbol{k}}} \to  \mathcal{M}_{\boldsymbol{\alpha},{\boldsymbol{k}}-k_i.\boldsymbol{1}_i} \to 0.
\end{equation}
Le module central est sans pente si et seulement si les deux autres modules le sont. En effet, comme dans le cas des bonnes $V$-filtration pour $p=1$ (cf \cite{MM}), une bonne $V$-multifiltration du terme central induit des bonnes $V$-multifiltration des termes extrêmes. On considère alors la suite exacte
  
\[0 \to U_{\boldsymbol{\ell}}\mathcal{M}_{\boldsymbol{\alpha},{\boldsymbol{k}}-\boldsymbol{1}_i} \to U_{\boldsymbol{\ell}}\mathcal{M}_{\boldsymbol{\alpha},{\boldsymbol{k}}} \to  U_{\boldsymbol{\ell}}\mathcal{M}_{\boldsymbol{\alpha},{\boldsymbol{k}}-k_i.\boldsymbol{1}_i} \to 0\]
et on observe que la condition multispécialisable sans pente de la définition \ref{defmultispe} est satisfaite pour le module central si et seuleument si elle l'est pour les deux autres modules. Par récurrence on est alors ramené à montrer que 
$(\boldsymbol{H},\mathcal{M}_{\boldsymbol{\alpha},\boldsymbol{0}})$ est sans pente.
Soit $m$ une section locale de $\mathcal{M}[\frac{1}{t_1...t_p}]$. D'après la proposition \ref{loca} le couple $(\boldsymbol{H},\mathcal{M}[\frac{1}{t_1...t_p}])$ est sans pente et par conséquent le lemme \ref{equiv} fournit localement, pour $1\leq i\leq p$, des polynômes $b_i$ non nuls vérifiant:
\[
b_i(s_i)m\boldsymbol{t}^{\boldsymbol{s}}\in \mathcal{D}_X[{\boldsymbol{s}}]t_im\boldsymbol{t}^{\boldsymbol{s}}.
\]
Par définition du $\mathcal{D}_X[s]$-module $\mathcal{M}[1/\boldsymbol{t},{\boldsymbol{s}}]\boldsymbol{t}^{\boldsymbol{s}}$, on obtient les équations:
\begin{equation}\label{bleu}
 b_i(s_i+\alpha_i+1)(m\otimes e_{\boldsymbol{\alpha},\boldsymbol{0}})\boldsymbol{t}^{\boldsymbol{s}}\in \mathcal{D}_X[{\boldsymbol{s}}]t_i(m\otimes e_{\boldsymbol{\alpha},\boldsymbol{0}})\boldsymbol{t}^{\boldsymbol{s}}.
\end{equation}

Soit ${\boldsymbol{k}}_0\in \mathbb{N}^p$ tel que pour tout $k_i\in \mathbb{N}$ vérifiant $k_i\geq k_{0,i}+1$, l'entier $-k_i$ n'est pas racine de $b_i(s_i+\alpha_i+1)\in\mathbb{C}[s_i]$. En remplaçant les $s_i$ par les entiers $k_i$ dans la relation \eqref{bleu} et en multipliant éventuellement par des $t_i$ on obtient que pour tout ${\boldsymbol{k}}\in \mathbb{Z}^p$
\[(m\otimes e_{\boldsymbol{\alpha},\boldsymbol{0}})\boldsymbol{t}^{\boldsymbol{k}}\in \mathcal{D}_X((m\otimes e_{\boldsymbol{\alpha},\boldsymbol{0}})\boldsymbol{t}^{-{\boldsymbol{k}}_0}).\]
De plus pour tout $1\leq i\leq p$, l'égalité $(\partial_i(m\otimes e_{\boldsymbol{\alpha},\boldsymbol{0}}))\boldsymbol{t}^{\boldsymbol{k}}=
\partial_i((m\otimes e_{\boldsymbol{\alpha},\boldsymbol{0}})\boldsymbol{t}^{\boldsymbol{k}})
+k_i(m\otimes e_{\boldsymbol{\alpha},\boldsymbol{0}})\boldsymbol{t}^{{\boldsymbol{k}}-\boldsymbol{1}_i}$ montre que $(\partial_i(m\otimes e_{\boldsymbol{\alpha},\boldsymbol{0}}))\boldsymbol{t}^{\boldsymbol{k}}\in \mathcal{D}_X((m\otimes e_{\boldsymbol{\alpha},\boldsymbol{0}})\boldsymbol{t}^{-{\boldsymbol{k}}_0})$ pour tout ${\boldsymbol{k}}\in \mathbb{Z}^p$. Comme $\mathcal{M}$ est engendré par un nombre fini de sections, en utilisant des extensions successives on peut supposer que $m$ engendre $\mathcal{M}$. On a donc $\mathcal{M}_{\boldsymbol{\alpha},\boldsymbol{0}}=\mathcal{D}_X((m\otimes e_{\boldsymbol{\alpha},\boldsymbol{0}})\boldsymbol{t}^{-{\boldsymbol{k}}_0})$. La filtration $\mathcal{D}_X(l)((m\otimes e_{\boldsymbol{\alpha},\boldsymbol{0}})\boldsymbol{t}^{-{\boldsymbol{k}}_0}))$ étant une bonne filtration du $\mathcal{D}_X$-module $\mathcal{M}_{\boldsymbol{\alpha},\boldsymbol{0}}$, celui-ci est cohérent. Les équations \eqref{bleu} ainsi que le lemme \ref{equiv} permettent alors de conclure que $(\boldsymbol{H},\mathcal{M}_{\boldsymbol{\alpha},\boldsymbol{0}})$ est sans pente et donc par ce qui précède que $(\boldsymbol{H},\mathcal{M}_{\boldsymbol{\alpha},{\boldsymbol{k}}})$ l'est.

Pour démontrer la deuxième partie de la proposition on commence par noter
\[U_{\boldsymbol{\beta}}(\mathcal{M}_{\boldsymbol{\alpha},{\boldsymbol{k}}}):=\bigoplus_{\boldsymbol{0}\leq\boldsymbol{\ell}\leq{\boldsymbol{k}}}V_{\boldsymbol{\alpha}+\boldsymbol{\beta}+\boldsymbol{1}}
 \left(\mathcal{M}[\frac{1}{t_1...t_p}]\right)e_{\boldsymbol{\alpha},\boldsymbol{\ell}}
\]
et on va montrer que c'est une bonne $V$-multifiltration qui satisfait à toutes les propriétés caractéristiques de la multifiltration de Malgrange-Kashiwara. Soit $m\in \mathcal{M}$, $\boldsymbol{\ell}\in \mathbb{N}^p$, $\beta\in \mathbb{C}$ et $1\leq i\leq p$. On a localement
\begin{equation}\label{calcul}
(t_i\partial_i+\beta)(m\otimes e_{\boldsymbol{\alpha},\boldsymbol{\ell}})=((t_i\partial_i+\beta+\alpha_i+1)m)\otimes e_{\boldsymbol{\alpha},\boldsymbol{\ell}}+m\otimes e_{\boldsymbol{\alpha},\boldsymbol{\ell-1}_i}
\end{equation}
et pour tout $\boldsymbol{n}\in\mathbb{Z}^p$
\[\boldsymbol{t}^{\boldsymbol{n}}(m\otimes e_{\boldsymbol{\alpha},\boldsymbol{\ell}})=(\boldsymbol{t}^{\boldsymbol{n}}m)\otimes e_{\boldsymbol{\alpha},\boldsymbol{\ell}}.\]
Ceci permet de montrer que $U_\bullet(\mathcal{M}_{\boldsymbol{\alpha},{\boldsymbol{k}}})$ est une $V$-multifiltration de $\mathcal{M}_{\boldsymbol{\alpha},{\boldsymbol{k}}}$ (c'est-à-dire que cette multifiltration vérifie $V_{\boldsymbol{\ell}}\mathcal{D}_X.U_{\boldsymbol{\beta}}(\mathcal{M}_{\boldsymbol{\alpha},{\boldsymbol{k}}})\subset U_{\boldsymbol{\beta}+\boldsymbol{\ell}}(\mathcal{M}_{\boldsymbol{\alpha},{\boldsymbol{k}}})$ pour tout $\boldsymbol{\beta}\in\mathbb{C}^p$ et pour tout $\boldsymbol{\ell}\in\mathbb{Z}^p$). 

Pour montrer que c'est une bonne $V$-multifiltration on fixe $\boldsymbol{\beta}\in\mathbb{C}^p$ et on montre que la $V$-multifiltration indexée par $\mathbb{Z}^p$, $U_{\boldsymbol{\beta}+\bullet}(\mathcal{M}_{\boldsymbol{\alpha},{\boldsymbol{k}}})$, est une bonne $V$-multifiltration de $\mathcal{M}_{\boldsymbol{\alpha},{\boldsymbol{k}}}$. Comme  la $V$-multifiltration indexée par $\mathbb{Z}^p$, $V_{\boldsymbol{\alpha}+\boldsymbol{\beta}+\bullet+\boldsymbol{1}}\left(\mathcal{M}[\frac{1}{t_1...t_p}]\right)$, est une bonne $V$-multifiltration elle est engendrée localement par un nombre fini de sections $\{m_j\}_ {j\in J}$. Si ${\boldsymbol{k}}=\boldsymbol{0}$ l'égalité \eqref{calcul} permet de montrer que les sections $\{m_j\otimes e_{\boldsymbol{\alpha},\boldsymbol{0}}\}_{j\in J}$ engendrent la $V$-multifiltration $U_{\boldsymbol{\beta}+\bullet}(\mathcal{M}_{\boldsymbol{\alpha},\boldsymbol{0}})$. On peut alors montrer par récurrence, en considérant la suite exacte \eqref{exactm} et l'égalité \eqref{calcul}, que pour tout ${\boldsymbol{k}}\in\mathbb{N}^p$ les sections $m_j\otimes e_{\boldsymbol{\alpha},\boldsymbol{\ell}}$, pour
$j\in J$ et $0\leq\boldsymbol{\ell}\leq{\boldsymbol{k}}$, engendrent la $V$-multifiltration $U_{\boldsymbol{\beta}+\bullet}(\mathcal{M}_{\boldsymbol{\alpha},{\boldsymbol{k}}})$. C'est donc une bonne $V$-multifiltration de $\mathcal{M}_{\boldsymbol{\alpha},{\boldsymbol{k}}}$.

On fixe maintenant $\boldsymbol{\beta}\in \mathbb{C}^p$ et on va construire, pour tout $1\leq i\leq p$, un polynôme $b_i(s)$ qui satisfait à 
\[b_i(t_i\partial_i)U_{\boldsymbol{\beta}}(\mathcal{M}_{\boldsymbol{\alpha},{\boldsymbol{k}}}) \subset U_{\boldsymbol{\beta-1}_i}(\mathcal{M}_{\boldsymbol{\alpha},{\boldsymbol{k}}}).\]
Par définition de la multifiltration de Malgrange-Kashiwara on peut choisir, pour tout $1\leq i\leq p$, un polynôme $c_i(s)$ vérifiant 
\[c_i(t_i\partial_i+\alpha_i+\beta_i+1)V_{\boldsymbol{\alpha}+\boldsymbol{\beta}+\boldsymbol{1}}\left(\mathcal{M}[\frac{1}{t_1...t_p}]\right)\subset  V_{\boldsymbol{\alpha}+\boldsymbol{\beta}+\boldsymbol{1-1}_i}\left(\mathcal{M}[\frac{1}{t_1...t_p}]\right)\]
et ayant ses racines dans l'intervalle $[-1,0[$. Soit $m\in V_{\boldsymbol{\alpha}+\boldsymbol{\beta}+\boldsymbol{1}}\left(\mathcal{M}[\frac{1}{t_1...t_p}]\right)$, l'égalité \eqref{calcul} permet de montrer que 
\[c_i(t_i\partial_i+\beta_i)(m\otimes e_{\boldsymbol{\alpha},\boldsymbol{\ell}})=(c_i(t_i\partial_i+\beta_i+\alpha_i+1)m)\otimes e_{\boldsymbol{\alpha},\boldsymbol{\ell}}+\tilde{m}\]
où $\tilde{m}\in  U_{\boldsymbol{\beta}}(\mathcal{M}_{\boldsymbol{\alpha},\boldsymbol{k-1}_i})$ si on pose $U_{\boldsymbol{\beta}}(\mathcal{M}_{\boldsymbol{\alpha},\boldsymbol{\ell}})=0$ pour $l_i<0$. On peut donc construire par récurrence un polynôme $b_{i,m}(s)$ ayant ses racines dans l'intervalle $[-1,0[$ et vérifiant
\[b_{i,m}(t_i\partial_i+\beta_i)(m\otimes e_{\boldsymbol{\alpha},\boldsymbol{\ell}})\in U_{\boldsymbol{\beta-1}_i}(\mathcal{M}_{\boldsymbol{\alpha},{\boldsymbol{k}}}).\]
Comme $U_{\boldsymbol{\beta}}(\mathcal{M}_{\boldsymbol{\alpha},{\boldsymbol{k}}})$ est localement engendré par un nombre fini de sections de la forme $m\otimes e_{\boldsymbol{\alpha},\boldsymbol{\ell}}$ pour $0\leq\boldsymbol{\ell}\leq{\boldsymbol{k}}$ on peut construire $b_i(s)$ ayant ses racines dans $[-1,0[$ tel que 
\[b_i(t_i\partial_i+\beta_i)U_{\boldsymbol{\beta}}(\mathcal{M}_{\boldsymbol{\alpha},{\boldsymbol{k}}}) \subset U_{\boldsymbol{\beta-1}_i}(\mathcal{M}_{\boldsymbol{\alpha},{\boldsymbol{k}}}).\]
Les racines du polynôme de Bernstein-Sato de la $V$-multifiltration $U_\bullet(\mathcal{M}_{\boldsymbol{\alpha},\boldsymbol{k-1}_i})$ sont donc dans l'intervalle $[-1,0[$, ce qui permet de conclure que c'est bien la $V$-multifiltration de Malgrange-Kashiwara:
 \[V_{\boldsymbol{\beta}}(\mathcal{M}_{\boldsymbol{\alpha},{\boldsymbol{k}}})=\bigoplus_{\boldsymbol{0}\leq\boldsymbol{\ell}\leq{\boldsymbol{k}}}V_{\boldsymbol{\alpha}+\boldsymbol{\beta}+\boldsymbol{1}}
 \left(\mathcal{M}[\frac{1}{t_1...t_p}]\right)e_{\boldsymbol{\alpha},\boldsymbol{\ell}}.
\]

\end{proof}

\section{Morphisme de comparaison}\label{morphcomp}

On va construire un morphisme de comparaison entre les cycles proches algébriques de $\mathcal{M}$ et les cycles proches topologiques de $\mathbf{DR}(\mathcal{M})$ relativement à l'application 
 \[\begin{array}{cccc}
     \pi:& X & \to & \mathbb{C}^p\\
       & (\boldsymbol{x},t_1,...,t_p) & \mapsto & (t_1,...,t_p).
     \end{array}\] 
On établira le lien avec la composition du morphisme de comparaison relatif aux $r$ premières coordonnées $t_i$ et de celui relatif aux $p-r$ coordonnées $t_i$ suivantes pour $1<r<p$.

\subsection{Comparaison avec les gradués}

Commençons par donner deux définitions.

\begin{de}\label{dagger}

Soit $\mathcal{M}$ un $\mathcal{D}_X$-module tel que le couple $(\boldsymbol{H},\mathcal{M})$ soit sans pente. On considère la famille $\{\textup{gr}_{\boldsymbol{k}}(\mathcal{M}),\partial_i\}_{{\boldsymbol{k}}\in\{0,1\}^p,1\leq i\leq p}$ composée des objets $\textup{gr}_{\boldsymbol{k}}(\mathcal{M})$ pour ${\boldsymbol{k}}\in\{0,1\}^p$ et des morphismes $\partial_i:\textup{gr}_{\boldsymbol{k}}(\mathcal{M})\to \textup{gr}_{\boldsymbol{k+1}_i}(\mathcal{M})$. On définit
\[i^\dagger\mathcal{M}:=\restriction{s(\textup{Cube}(\textup{gr}_\bullet(\mathcal{M})))}{X_0}\]
où $s(.)$ et $\textup{Cube}(.)$ sont les foncteurs définis dans l'appendice \ref{simple} et \ref{cube} et $X_0=\pi^{-1}(0)$.

\end{de}

Par exemple pour $p=2$ on a 
\[
\begin{array}{cccccccclll}
i^\dagger\mathcal{M}= & 0 \to & \restriction{\textup{gr}_{-1,-1}(\mathcal{M})}{X_0} & \to & \restriction{\textup{gr}_{0,-1}(\mathcal{M})}{X_0}  \bigoplus  \restriction{\textup{gr}_{-1,0}(\mathcal{M})}{X_0} & \to & \restriction{\textup{gr}_{0,0}(\mathcal{M})}{X_0} & \to 0 \\\\
& & m & \mapsto &  (\partial_1m  , - \partial_2m)  \\\\
&&&& (m_1  ,  m_2) & \mapsto & \partial_2m_1+\partial_1m_2 & .
\end{array}
\]

\begin{de}\label{sharp}

De la même manière que pour la définition précédente on considère la famille $\{{V}_{\boldsymbol{k}}(\mathcal{M}),\partial_i\}_{{\boldsymbol{k}}\in\{0,1\}^p,1\leq i\leq p}$ composée des objets ${V}_{\boldsymbol{k}}(\mathcal{M})$ pour ${\boldsymbol{k}}\in\{0,1\}^p$ et des morphismes $\partial_i:{V}_{\boldsymbol{k}}(\mathcal{M})\to {V}_{\boldsymbol{k+1}_i}(\mathcal{M})$. On définit
\[i^\#\mathcal{M}:=\restriction{s(\textup{Cube}({V}_\bullet(\mathcal{M})))}{X_0}\]
où $X_0=\pi^{-1}(0)$.

\end{de}

\begin{Rem}

\begin{enumerate}

\item Notons que si on considère la famille $\mathscr{M}:=\{\mathcal{M},\partial_i\}_{{\boldsymbol{k}}\in\{0,1\}^p,1\leq i\leq p}$ on a 
\[\restriction{s(\textup{Cube}(\mathscr{M}))}{X_0}\simeq \restriction{\mathbf{DR}_{X/X_0}(\mathcal{M})}{X_0}
\]
où l'on considère la projection 
\[\begin{array}{cccc}
     \tau :& X & \to & X_0\\
       & (\boldsymbol{x},t_1,...,t_p) & \mapsto & (\boldsymbol{x},0,...,0).
     \end{array}\] 
\item On étend ces définitions aux complexes en commençant par appliquer $\textup{Cube}(.)$ en chaque degré puis en prenant le complexe simple associé à l'hypercomplexe obtenu. On note encore $i^\#$ et $i^\dagger$ ces foncteurs appliqués aux complexes.

\end{enumerate}

\end{Rem}
D'après la remarque précédente les morphismes naturels pour tout $\boldsymbol{k}\in\{0,1\}^p$ 
\[\textup{gr}_{\boldsymbol{k}}(\mathcal{M}) \leftarrow
{V}_{\boldsymbol{k}}(\mathcal{M}) \to 
\mathcal{M}\]
induisent les morphismes de complexes 
\begin{equation}\label{compgrad}
\boxed{i^\dagger\mathcal{M} \leftarrow
i^\#\mathcal{M} \to
\mathbf{DR}_{X/X_0}(\mathcal{M})}
\end{equation}
où l'on omet de noter la restriction de $\mathbf{DR}_{X/X_0}(\mathcal{M})$ à $X_0$. Soit $I=\{1,...,r\}\subset\{1,...,p\}$ les $r$ premiers entiers pour $r<p$, on note 
\[\begin{array}{cccc}
     \pi_I:& X & \to & \mathbb{C}^r\\
       & (\boldsymbol{x},t_1,...,t_p) & \mapsto & (t_1,...,t_r)
     \end{array}\] 
et $X_0^I:=\pi_I^{-1}(0)$. On note $V_\bullet^{I}$ la $V$-multifiltration par rapport aux fonctions $t_1,...,t_r$. La $V$-multifiltration de Malgrange-Kashiwara de $\mathcal{M}$ induit une $V^{I^c}$-multifiltration du $\mathcal{D}_{X_0^I}$-module  $\text{gr}_{\boldsymbol{\alpha}_I}^{I}(\mathcal{M})$ pour tout $\boldsymbol{\alpha}_I\in\mathbb{C}^r$. Pour tout $\boldsymbol{\alpha}\in\mathbb{C}^p$ on a le diagramme commutatif suivant 
\begin{equation}\xymatrix{
\text{gr}_{\boldsymbol{\alpha}}\mathcal{M} \ar[d] & &
 V_{\boldsymbol{\alpha}}\mathcal{M} \ar@{=}[d] \ar[ll] \\
 \text{gr}_{\boldsymbol{\alpha}_{I^c}}^{I^c}
\left(\text{gr}_{\boldsymbol{\alpha}_I}^I\mathcal{M}\right) &
V_{\boldsymbol{\alpha}_{I^c}}^{I^c}
\left(\text{gr}_{\boldsymbol{\alpha}_I}^I\mathcal{M}\right) \ar[l] &
V_{\boldsymbol{\alpha}_{I^c}}^{I^c}
\left(V_{\boldsymbol{\alpha}_I}^I\mathcal{M}\right). \ar[l]}
\label{coupegradV}\end{equation} 

On définit les foncteurs $i_I^\dagger$ et $i_I^\#$ en considérant respectivement les familles $\{\text{gr}_{{\boldsymbol{k}}_I}(\mathcal{M}'),\partial_i\}_{{\boldsymbol{k}}_I\in\{0,1\}^r,1\leq i\leq r}$ et $\{{V}_{{\boldsymbol{k}}_I}(\mathcal{M}'),\partial_i\}_{{\boldsymbol{k}}_I\in\{0,1\}^r,1\leq i\leq r}$. On définit de manière analogue les foncteurs $i_{I^c}^\dagger$ et $i_{I^c}^\#$ appliqués à la catégorie des $\mathcal{D}_{X_0^I}$-modules en considérant la projection \[\begin{array}{cccc}
     \restriction{\pi_{I^c}}{X_0^I}:& X_0^I & \to & \mathbb{C}^{p-r}\\
       & (\boldsymbol{x},t_{p-r},...,t_p) & \mapsto & (t_{p-r},...,t_p).
     \end{array}\] 
Les propriétés des hypercomplexes, du foncteur $s(.)$ et le diagramme commutatif \eqref{coupegradV} pour $\boldsymbol{\alpha}\in\{0,1\}^p$ fournissent le diagramme commutatif suivant

\begin{equation}\label{commutgradV}\xymatrix{i^\dagger\mathcal{M}  \ar[d]
& & 
i^\#\mathcal{M} \ar[ll] \ar@{=}[d] \ar[rr]
& &
\mathbf{DR}_{X/X_0}\mathcal{M} \ar@{=}[d]
\\
i_{I_c}^\dagger(i_I^\dagger\mathcal{M})
&
i_{I_c}^\#(i_I^\dagger\mathcal{M}) \ar[l]
&
i_{I_c}^\#(i_I^\#\mathcal{M}) \ar[l] \ar[r]
&
i_{I^c}^\#(
\mathbf{DR}_{X/X_0^I}\mathcal{M}) \ar[r]
&
\mathbf{DR}_{X_0^I/X_0}(
\mathbf{DR}_{X/X_0^I}\mathcal{M}).
}\end{equation}

\subsection{Le morphisme <<Nils>>}

D'après la proposition \ref{VNilsson} on a 
\[\textup{gr}_{\boldsymbol{-1}}(\mathcal{M}_{\boldsymbol{\alpha},{\boldsymbol{k}}})=\bigoplus_{\boldsymbol{0}\leq\boldsymbol{\ell}\leq{\boldsymbol{k}}}
\textup{gr}_{\boldsymbol{\alpha}}
 \left(\mathcal{M}[\frac{1}{t_1...t_p}]\right)e_{\boldsymbol{\alpha},\boldsymbol{\ell}}.
\]
La proposition \ref{loca} assure que pour $\boldsymbol{\alpha}\in[-1,0[^p$ on a l'isomorphisme

\[\text{gr}_{\boldsymbol{\alpha}}\left(\mathcal{M}\right) \simeq \textup{gr}_{\boldsymbol{\alpha}}
 \left(\mathcal{M}[\frac{1}{t_1...t_p}]\right).\]
On définit alors le morphisme suivant 
\[\begin{array}{cccc}
\Phi: & \text{gr}_{\boldsymbol{\alpha}}\left(\mathcal{M}\right) & \longrightarrow & \text{gr}_{-\boldsymbol{1}}(\mathcal{M}_{\boldsymbol{\alpha},{\boldsymbol{k}}}) \\
& m & \longmapsto & \displaystyle\sum_{0\leq \boldsymbol{\ell}\leq{\boldsymbol{k}}}\left[(-1)^{\ell_1+...+\ell_p}(t_1\partial_1+\alpha_1+1)^{\ell_1}...(t_p\partial_p+\alpha_p+1)^{\ell_p}m\right]\otimes e_{\boldsymbol{\alpha},\boldsymbol{\ell}}
\end{array}\]
qui induit un morphisme de complexes 
\[\boxed{\mathbf{Nils}:\text{gr}_{\boldsymbol{\alpha}}\left(\mathcal{M}\right) \to i^\dagger\mathcal{M}_{\boldsymbol{\alpha}}}\]
où l'on identifie $\text{gr}_{\boldsymbol{\alpha}}\left(\mathcal{M}\right)$ avec un complexe concentré en degré zéro et où $\mathcal{M}_{\boldsymbol{\alpha}}$ est la limite inductive des $\mathcal{M}_{\boldsymbol{\alpha},\boldsymbol{k}}$ prise sur $\boldsymbol{k}\in\mathbb{N}^p$. 

\begin{Rem}

Remarquons ici que $\mathcal{M}_{\boldsymbol{\alpha}}$ n'est pas un $\mathcal{D}_X$-module de type fini. Mais le fait qu'il soit limite des $\mathcal{M}_{\boldsymbol{\alpha},\boldsymbol{k}}$ et que les couples $(\boldsymbol{H},\mathcal{M}_{\boldsymbol{\alpha},{\boldsymbol{k}}})$ soient sans pente suffit pour le reste de la construction et pour le théorème de comparaison.

\end{Rem}

En utilisant la définition \ref{Nilsson} on obtient
\[ \mathcal{O}_X \otimes_{\pi^{-1}\mathcal{O}_{\mathbb{C}^p}} \pi^{-1}(\mathcal{N}_{\boldsymbol{\alpha,k}}) \simeq 
\left(\mathcal{O}_X 
\otimes_{\pi^{-1}_I\mathcal{O}_{\mathbb{C}^r}}
\pi^{-1}_I(\mathcal{N}_{\boldsymbol{\alpha_I,k_I}})\right) 
\otimes_{\mathcal{O}_X}
\left(\mathcal{O}_X 
\otimes_{\pi^{-1}_{I^c}\mathcal{O}_{\mathbb{C}^{p-r}}}
\pi^{-1}_{I^c}(\mathcal{N}_{\boldsymbol{\alpha_{I^c},k_{I^c}}})\right).\]
On déduit de cet isomorphisme et de la définition du morphisme $\Phi$ le diagramme commutatif suivant

\begin{equation}\label{commutNils}
\xymatrix{\text{gr}_{\boldsymbol{\alpha}}\mathcal{M} \ar[rr]^{\mathbf{Nils}} \ar[dd]
&& i^\dagger\mathcal{M}_{\boldsymbol{\alpha}} \ar[d] \\
&& i_{I_c}^\dagger(i_I^\dagger\mathcal{M}_{\boldsymbol{\alpha}}) \ar@{=}[d] \\
\text{gr}_{\boldsymbol{\alpha}_{I^c}}^{I^c}
\left(\text{gr}_{\boldsymbol{\alpha}_I}^I\mathcal{M}\right) \ar[r]
& \text{gr}_{\boldsymbol{\alpha}_{I^c}}^{I^c}
\left(i_I^\dagger\mathcal{M}_{\boldsymbol{\alpha}_I}\right) \ar[r]
& i_{I^c}^\dagger\left[(i_I^\dagger\mathcal{M}_{\boldsymbol{\alpha}_I})
_{\boldsymbol{\alpha}_{I^c}}\right].
}\end{equation}

\subsection{Le morphisme <<Topo>>}

Rappelons le diagramme commutatif utilisé pour définir les cycles proches topologiques:
\[\xymatrix{ \pi^{-1}(0) \ar[r]^i \ar[d] & X \ar[d]^\pi & X^* \ar[l]_j  \ar[d]^{\restriction{\pi}{X^*}} & \widetilde{X} \ar[l]_p \ar[d]^{\tilde{\pi}}\\
\{0\} \ar[r]^i & \mathbb{C}^p & (\mathbb{C}^*)^p \ar[l]_j & \widetilde{(\mathbb{C}^*)^p}. \ar[l]_p}\]

\begin{lemma}\label{cyclepro}

Soit $\boldsymbol{\alpha}\in \mathbb{C}^p$, il existe un morphisme naturel 
\[\boxed{\mathbf{Topo}:\mathbf{DR}_X(\mathcal{M }_{\boldsymbol{\alpha}})\to \Psi_{\pi}\mathbf{DR}_X(\mathcal{M}).}\]

\end{lemma}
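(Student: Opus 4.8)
Le plan est de réaliser $\mathbf{Topo}$ en passant par le revêtement universel $p:\widetilde{X}\to X^*$, sur lequel les fonctions multiformes attachées aux sections $e_{\boldsymbol{\alpha},\boldsymbol{\ell}}$ deviennent uniformes. En identifiant $p$ à $\exp$ sur le tore, posons, pour $\boldsymbol{z}$ la coordonnée de $\widetilde{(\mathbb{C}^*)^p}$,
\[f_{\boldsymbol{\alpha},\boldsymbol{\ell}}:=e^{2i\pi(\boldsymbol{\alpha}+\boldsymbol{1})\cdot\boldsymbol{z}}\prod_{j=1}^p\frac{(2i\pi z_j)^{\ell_j}}{\ell_j!}\in\mathcal{O}_{\widetilde{(\mathbb{C}^*)^p}}.\]
Un calcul direct montre que ces fonctions vérifient $z_i\partial_{z_i}f_{\boldsymbol{\alpha},\boldsymbol{\ell}}=(\alpha_i+1)f_{\boldsymbol{\alpha},\boldsymbol{\ell}}+f_{\boldsymbol{\alpha},\boldsymbol{\ell}-\boldsymbol{1}_i}$, c'est-à-dire exactement les relations de la définition \ref{Nilsson}; comme de plus $1/z_j$ se relève en $e^{-2i\pi z_j}$, l'envoi $e_{\boldsymbol{\alpha},\boldsymbol{\ell}}\mapsto f_{\boldsymbol{\alpha},\boldsymbol{\ell}}$ fournit un morphisme $\mathcal{D}$-linéaire $\exp^+\mathcal{N}_{\boldsymbol{\alpha}}\to\mathcal{O}_{\widetilde{(\mathbb{C}^*)^p}}$. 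C'est le cœur de la construction: la structure de $\mathcal{N}_{\boldsymbol{\alpha}}$ a précisément été choisie pour que cette trivialisation existe.

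En tensorisant cette trivialisation par $p^+j^+\mathcal{M}$ au-dessus de $\mathcal{O}_{\widetilde{X}}$, et en utilisant l'écriture $\mathcal{M}_{\boldsymbol{\alpha}}=\mathcal{M}[1/\boldsymbol{t}]\otimes_{\pi^{-1}\mathcal{O}_{\mathbb{C}^p}}\pi^{-1}\mathcal{N}_{\boldsymbol{\alpha}}$ jointe à l'égalité $j^+\mathcal{M}[1/\boldsymbol{t}]=j^+\mathcal{M}$ sur $X^*$, on obtient un morphisme $\mathcal{D}_{\widetilde{X}}$-linéaire $p^+j^+\mathcal{M}_{\boldsymbol{\alpha}}\to p^+j^+\mathcal{M}$. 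En lui appliquant $\mathbf{DR}_{\widetilde{X}}$, qui commute aux images inverses par $p$ et $j$, il vient $p^{-1}j^{-1}\mathbf{DR}_X(\mathcal{M}_{\boldsymbol{\alpha}})\to p^{-1}j^{-1}\mathbf{DR}_X(\mathcal{M})$. L'adjonction $(p^{-1},p_*)$, avec $\boldsymbol{R}p_*=p_*$ puisque $p$ est un revêtement, transforme ce morphisme en $j^{-1}\mathbf{DR}_X(\mathcal{M}_{\boldsymbol{\alpha}})\to p_*p^{-1}j^{-1}\mathbf{DR}_X(\mathcal{M})$. On applique $\boldsymbol{R}j_*$, on compose à la source avec le morphisme d'adjonction $\mathbf{DR}_X(\mathcal{M}_{\boldsymbol{\alpha}})\to\boldsymbol{R}j_*j^{-1}\mathbf{DR}_X(\mathcal{M}_{\boldsymbol{\alpha}})$, puis $i^{-1}$, ce qui donne le morphisme cherché
\[\mathbf{Topo}:\mathbf{DR}_X(\mathcal{M}_{\boldsymbol{\alpha}})\longrightarrow i^{-1}\boldsymbol{R}j_*p_*p^{-1}j^{-1}\mathbf{DR}_X(\mathcal{M})=\Psi_\pi\mathbf{DR}_X(\mathcal{M}),\]
la restriction $i^{-1}$ à la source étant omise comme ailleurs dans le texte. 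La naturalité en $\mathcal{M}$ et en $\boldsymbol{\alpha}$ découle de la fonctorialité de chacune de ces opérations.

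Comme $\mathcal{M}_{\boldsymbol{\alpha}}$ n'est pas de type fini, on précise la construction au niveau fini: pour chaque $\boldsymbol{k}\in\mathbb{N}^p$ la proposition \ref{VNilsson} assure que $(\boldsymbol{H},\mathcal{M}_{\boldsymbol{\alpha},\boldsymbol{k}})$ est sans pente et $\mathbf{DR}_X(\mathcal{M}_{\boldsymbol{\alpha},\boldsymbol{k}})$ est à cohomologie constructible; l'envoi $e_{\boldsymbol{\alpha},\boldsymbol{\ell}}\mapsto f_{\boldsymbol{\alpha},\boldsymbol{\ell}}$ (pour $0\leq\boldsymbol{\ell}\leq\boldsymbol{k}$) étant compatible aux inclusions $\mathcal{N}_{\boldsymbol{\alpha},\boldsymbol{k}}\hookrightarrow\mathcal{N}_{\boldsymbol{\alpha},\boldsymbol{k}'}$, on obtient un système compatible de morphismes $\mathbf{Topo}_{\boldsymbol{k}}$ dont $\mathbf{Topo}$ est la limite inductive, conformément à la remarque suivant la définition de $\mathcal{M}_{\boldsymbol{\alpha}}$. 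Les points à vérifier avec soin, et qui constituent l'essentiel de la démonstration, sont la $\mathcal{D}_{\widetilde{X}}$-linéarité de la trivialisation ci-dessus (y compris la compatibilité aux monodromies $T_i$, que l'on lit sur l'action de la translation $z_i\mapsto z_i+1$ sur les $f_{\boldsymbol{\alpha},\boldsymbol{\ell}}$) et la bonne tenue des adjonctions dans la catégorie dérivée, en particulier la commutation de $p_*$ et de $\boldsymbol{R}j_*$ à la limite inductive sur $\boldsymbol{k}$ permettant le passage du niveau fini à $\mathcal{M}_{\boldsymbol{\alpha}}$.
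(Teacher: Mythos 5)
Votre preuve est correcte et aboutit bien à un morphisme naturel $\mathbf{DR}_X(\mathcal{M}_{\boldsymbol{\alpha}})\to\Psi_\pi\mathbf{DR}_X(\mathcal{M})$, mais elle s'organise différemment de celle du texte. L'ingrédient de fond est le même — les sections $e_{\boldsymbol{\alpha},\boldsymbol{\ell}}$ se réalisent comme fonctions multiformes, c'est-à-dire comme fonctions holomorphes honnêtes sur le revêtement universel — seulement vous l'exploitez sous sa forme \emph{adjointe}: vous montez sur $\widetilde{X}$, où l'envoi $e_{\boldsymbol{\alpha},\boldsymbol{\ell}}\mapsto f_{\boldsymbol{\alpha},\boldsymbol{\ell}}$ fournit une trivialisation $\mathcal{D}$-linéaire $\exp^+\mathcal{N}_{\boldsymbol{\alpha}}\to\mathcal{O}$, vous tensorisez par $p^+j^+\mathcal{M}$, appliquez $\mathbf{DR}$ (qui commute aux images inverses par les isomorphismes locaux $j$ et $p$), puis redescendez par les adjonctions $(p^{-1},p_*)$ et $(j^{-1},\boldsymbol{R}j_*)$. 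Le texte, lui, ne quitte jamais $X$: il part de l'inclusion $\mathcal{N}_{\boldsymbol{\alpha}}\subset j_*p_*p^{-1}\mathcal{O}_{(\mathbb{C}^*)^p}$ (dont votre trivialisation est précisément l'adjointe), utilise le morphisme de changement de base $\pi^{-1}(j\circ p)_*\to(j\circ p)_*\tilde\pi^{-1}$, puis deux formules de projection pour $p$ (l'une pour le produit tensoriel, l'autre pour faire commuter $\mathbf{DR}$ et $p_*$). Votre variante évite ainsi les formules de projection au prix de vérifications explicites en haut du revêtement; elle a l'avantage de rendre transparente la compatibilité aux monodromies $T_i$ (action des translations $\tau_i$ sur les $f_{\boldsymbol{\alpha},\boldsymbol{\ell}}$) et le passage à la limite sur $\boldsymbol{k}$, deux points que le texte passe sous silence. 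Un seul bémol, purement notationnel: dans votre vérification, l'opérateur que vous notez $z_i\partial_{z_i}$ doit être compris comme l'image réciproque du champ d'Euler de $(\mathbb{C}^*)^p$, soit $\frac{1}{2i\pi}\partial_{z_i}$ dans la coordonnée $z_i$ du revêtement; écrit littéralement avec $z_i$ la coordonnée de $\widetilde{(\mathbb{C}^*)^p}$, l'identité annoncée serait fausse, mais le calcul que vous visez est bien exact et la $\mathcal{D}$-linéarité de la trivialisation s'ensuit.
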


\begin{proof}

Par définition, $\mathcal{M}_{\boldsymbol{\alpha}}=\mathcal{M}\otimes_{\pi^{-1}\mathcal{O}_{\mathbb{C}^p}} \pi^{-1}\mathcal{N}_{\boldsymbol{\alpha}}$, or on a une inclusion $\mathcal{N}_{\boldsymbol{\alpha}}\subset j_*p_*p^{-1}\mathcal{O}_{(\mathbb{C}^*)^p}$ dans le faisceau des fonctions holomorphes multiformes. Par fonctorialité on a donc le morphisme:
\[\mathbf{DR}_X(\mathcal{M }_{\boldsymbol{\alpha}})\to \mathbf{DR}_X(\mathcal{M}\otimes \pi^{-1}j_*p_*p^{-1}\mathcal{O}_{(\mathbb{C}^*)^p}).\]

L'adjonction des foncteurs  image inverse et image directe fournit un morphisme de foncteurs $\pi^{-1}(j\circ p)_*\to (j\circ p)_*\tilde{\pi}^{-1}$. Ceci donne le morphisme:

\[\begin{array}{cccc}\mathbf{DR}_X(\mathcal{M}\otimes \pi^{-1}j_*p_*p^{-1}\mathcal{O}) & \to & \mathbf{DR}_X(\mathcal{M}\otimes j_*p_*\tilde{\pi}^{-1}p^{-1}\mathcal{O}) \\
 &  & = \mathbf{DR}_X(\mathcal{M}\otimes j_*p_*p^{-1}\restriction{\pi}{X^*}^{-1}\mathcal{O}).
\end{array}\]
Par adjonction on a le morphisme:
\[\begin{array}{ccl}
\mathbf{DR}_X(\mathcal{M}\otimes j_*p_*p^{-1}\restriction{\pi}{X^*}^{-1}\mathcal{O}) & \to & \boldsymbol{R}j_*j^{-1}\mathbf{DR}_X(\mathcal{M}
\otimes j_*p_*p^{-1}\restriction{\pi}{X^*}^{-1}\mathcal{O})  \\
& = & \boldsymbol{R}j_*\mathbf{DR}_X(j^{-1}\mathcal{M}
\otimes j^{-1}j_*p_*p^{-1}\pi^{-1}\mathcal{O}) \\
& = & \boldsymbol{R}j_*\mathbf{DR}_X(j^{-1}\mathcal{M}
\otimes p_*p^{-1}\pi^{-1}\mathcal{O}).
\end{array}\]
On applique ensuite le morphisme (2.3.21) de \cite{kash} (formule de projection) à la fonction $p$, en considérant le fait que $p_*$ est un foncteur exact car $p$ est à fibres discrètes. Par fonctorialité on a alors le morphisme suivant:
\[\begin{array}{ccl}
\boldsymbol{R}j_*\mathbf{DR}_X(j^{-1}\mathcal{M}
\otimes p_*p^{-1}\pi^{-1}\mathcal{O}) & \to & 
\boldsymbol{R}j_*\mathbf{DR}_X(p_*p^{-1}(j^{-1}\mathcal{M}
\otimes \pi^{-1}\mathcal{O}))\\
& = & \boldsymbol{R}j_*\mathbf{DR}_X(p_*p^{-1}j^{-1}\mathcal{M}).
\end{array}\]

Sachant que $\mathbf{DR}_X\mathcal{M}=\Omega^n\overset{\mathbb{L}}{\otimes}_{\mathcal{D}_X} \mathcal{M}$, on peut appliquer le morphisme (2.6.21) de \cite{kash} à $p$ (formule de projection) et on obtient le morphisme:
\[\begin{array}{llc}
\boldsymbol{R}j_*\mathbf{DR}_X(p_*p^{-1}j^{-1}\mathcal{M}) & \to & \boldsymbol{R}j_*p_*\mathbf{DR}_X(p^{-1}j^{-1}\mathcal{M})\\
& = & \boldsymbol{R}j_*p_*p^{-1}j^{-1}\mathbf{DR}_X(\mathcal{M}).
\end{array}\]

Si l'on compose tous les morphismes naturels que l'on vient de construire on obtient bien le morphisme naturel attendu:
\[\mathbf{DR}_X(\mathcal{M }_{\boldsymbol{\alpha}})\to \Psi_{\pi}\mathbf{DR}_X(\mathcal{M}).\]

\end{proof}

La naturalité de ce morphisme ainsi que la définition du morphisme \eqref{coupeproche}
\[
\Psi_\pi\mathbf{DR}_X(\mathcal{M})\to \Psi_{\pi_{I^c}}(\Psi_{\pi_I}\mathbf{DR}_X(\mathcal{M}))
\]
permettent de montrer que le diagramme suivant est commutatif

\begin{equation}\label{commutTopo}\xymatrix{\mathbf{DR}_{X}\mathcal{M}_{\boldsymbol{\alpha}} \ar@{=}[dd] \ar[rr]^{\mathbf{Topo}} && 
\Psi_{\pi}\mathbf{DR}_{X}\mathcal{M} \ar[dd]
\\\\
\mathbf{DR}_{X}\left[(\mathcal{M}_{\boldsymbol{\alpha}_{I}})_{\boldsymbol{\alpha}_{I^c}}\right] \ar[r] &
\Psi_{\pi_{I^c}}(
\mathbf{DR}_{X}\mathcal{M}_{\boldsymbol{\alpha}_{I}}) \ar[r] & \Psi_{\pi_{I^c}}(\Psi_{\pi_{I}}\mathbf{DR}_{X}\mathcal{M}).
}\end{equation}

\subsection{Le morphisme de comparaison}

En combinant les morphismes \eqref{compgrad}, $\mathbf{Nils}$ et $\mathbf{Topo}$ on obtient la suite de morphismes suivante
\begin{multline}\label{comparaison} \mathbf{DR}_{X_0}
\Psi_{\mathbf{H}}\left(\mathcal{M}\right) 
\xrightarrow{\mathbf{Nils}}
\bigoplus_{\boldsymbol{\alpha}\in[-1,0[^p}\mathbf{DR}_{X_0}
i^\dagger\mathcal{M}_{\boldsymbol{\alpha}} 
\leftarrow 
\bigoplus_{\boldsymbol{\alpha}\in[-1,0[^p}\mathbf{DR}_{X_0}
i^\#\mathcal{M}_{\boldsymbol{\alpha}} 
\to \\
\bigoplus_{\boldsymbol{\alpha}\in[-1,0[^p}
\mathbf{DR}_X(\mathcal{M}_{\boldsymbol{\alpha}})
\xrightarrow{\mathbf{Topo}}
\Psi_{\pi}\mathbf{DR}_X(\mathcal{M}).
\end{multline}
On a appliqué les morphisme \eqref{compgrad} à $\mathcal{M}_{\boldsymbol{\alpha}}$, on a ensuite appliqué le foncteur $\mathbf{DR}_{X_0}$ et on a pris la somme sur $\boldsymbol{\alpha}\in ]-1,0]^p$ en utilisant la définition
\[\Psi_{\mathbf{H}}\left(\mathcal{M}\right) := \bigoplus_{\boldsymbol{\alpha}\in[-1,0[^p}\textup{gr}_{\boldsymbol{\alpha}}(\mathcal{M}).\]

\begin{theorem}\label{thcomparaison}

Si le couple $(\mathbf{H},\mathcal{M})$ est sans pente alors les morphismes \eqref{comparaison} sont des isomorphismes qui commutent aux endomorphismes de monodromie $T_i$, on obtient l'isomorphisme de comparaison
\[\mathbf{DR}_{X_0}
\Psi_{\mathbf{H}}\left(\mathcal{M}\right)\simeq
\Psi_{\pi}\mathbf{DR}_X(\mathcal{M}).\]
De plus si $I=\{1,...,r\}\subset\{1,...,p\}$ et si l'on applique successivement cet isomorphisme de comparaison par rapport aux familles d'hypersurfaces $\mathbf{H}_I$ et $\mathbf{H}_{I^c}$ le résultat ne dépend pas de l'ordre dans lequel on applique l'isomorphisme. Autrement dit le diagramme suivant est commutatif
\[
\xymatrix{\mathbf{DR}_{X_0}\Psi_{\mathbf{H}_{I^c}}(\Psi_{\mathbf{H}_{I}}\mathcal{M})  \ar@{-}[d]^{\simeq} &
\mathbf{DR}_{X_0}
\Psi_{\mathbf{H}}\left(\mathcal{M}\right) \ar[l]_-{\sim} \ar[r]^-{\sim} \ar@{-}[d]^{\simeq} &
\mathbf{DR}_{X_0}\Psi_{\mathbf{H}_I}(\Psi_{\mathbf{H}_{I^c}}\mathcal{M})  \ar@{-}[d]^{\simeq} \\
\Psi_{\pi_{I^c}}(\Psi_{\pi_I}\mathbf{DR}_X(\mathcal{M})) &
\Psi_{\pi}\mathbf{DR}_X(\mathcal{M}) \ar[l]_-{\sim} \ar[r]^-{\sim} & 
\Psi_{\pi_{I}}(\Psi_{\pi_{I^c}}\mathbf{DR}_X(\mathcal{M})).
}\]

\end{theorem}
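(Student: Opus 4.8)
The plan is to show that the whole chain \eqref{comparaison} consists of isomorphisms compatible with the monodromies $T_i$, and then to read off the independence of the order of iteration from the three compatibility squares \eqref{commutgradV}, \eqref{commutNils} and \eqref{commutTopo}. For the isomorphism statement I would treat the four arrows separately; for the final diagram I would argue by induction on $p$, the case $p=1$ being the classical comparison \eqref{thcomp}.

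First, the arrow $\mathbf{Nils}$. Using Proposition \ref{VNilsson} to identify $\textup{gr}_{\boldsymbol{k}}(\mathcal{M}_{\boldsymbol{\alpha}})$ at the vertices of the cube with copies of $\textup{gr}_{\boldsymbol{\alpha}}(\mathcal{M}[1/\boldsymbol{t}])\simeq\textup{gr}_{\boldsymbol{\alpha}}(\mathcal{M})$ indexed by the $e_{\boldsymbol{\alpha},\boldsymbol{\ell}}$, the complex $i^\dagger\mathcal{M}_{\boldsymbol{\alpha}}$ becomes the cube on the operators $\partial_i=t_i^{-1}E_i$. On these graded pieces $E_i$ acts as $(\alpha_i+1)+\nu_i+S_i$, where $\nu_i$ is the nilpotent operator of Corollaire \ref{cornilp} and $S_i\colon e_{\boldsymbol{\alpha},\boldsymbol{\ell}}\mapsto e_{\boldsymbol{\alpha},\boldsymbol{\ell}-\boldsymbol{1}_i}$ is the Nilsson shift. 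A direct computation then shows that $\Phi$ takes values in $\bigcap_i\ker E_i$, that it is injective, and that the shift operators $S_i$ force the complex to be acyclic in positive degrees; hence $\mathbf{Nils}$ is a quasi-isomorphism onto its $H^0=\textup{gr}_{\boldsymbol{\alpha}}(\mathcal{M})$. For the two central arrows of \eqref{compgrad} I would apply $\mathbf{DR}_{X_0}$ and use that, off the critical graded pieces, multiplication by $t_i$ (resp.\ the action of $\partial_i$) is bijective on the $V$-graded modules, so that both $V_{\boldsymbol{k}}\to\textup{gr}_{\boldsymbol{k}}$ and $V_{\boldsymbol{k}}\hookrightarrow\mathcal{M}_{\boldsymbol{\alpha}}$ induce isomorphisms on the relative de Rham (Koszul) cohomology.

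Next, the arrow $\mathbf{Topo}$. Rather than redo the analytic comparison, I would prove it is an isomorphism by induction on $p$: the diagram \eqref{commutTopo} together with Maisonobe's factorisation \eqref{coupeproche} (Proposition \ref{maisontopo}) expresses $\mathbf{Topo}$ for $\pi$ as the composite of $\mathbf{Topo}$ for $\pi_I$ and for $\pi_{I^c}$, so that with $I=\{1\}$ one is reduced to $p=1$, where $\mathbf{Topo}$ is the classical comparison morphism of \eqref{thcomp}; here the Nilsson sections $e_{\boldsymbol{\alpha},\boldsymbol{\ell}}$, which behave like $\boldsymbol{z}^{\boldsymbol{\alpha}+\boldsymbol{1}}\prod_i(\log z_i)^{\ell_i}/\ell_i!$, span exactly the generalized monodromy eigenspaces of the stalk of $\Psi_\pi$. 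Compatibility of all four arrows with the monodromies $T_i$ is checked directly from the definitions: the Nilsson monodromy of Définition \ref{Nilsson} corresponds to $\exp(-2i\pi E_i)$ on $\Psi_{\mathbf{H}}$ and to the translation $\tau_i$ on $\Psi_\pi$.

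Finally, the independence of the order. Concatenating the three squares \eqref{commutgradV}, \eqref{commutNils} and \eqref{commutTopo}, whose vertical arrows are precisely the decomposition isomorphisms of Propositions \ref{maisonalg} and \ref{maisontopo}, shows that the comparison isomorphism attached to $\mathbf{H}$ coincides, under these decompositions, with the comparison for $\mathbf{H}_I$ followed by that for $\mathbf{H}_{I^c}$. Running the same argument with the roles of $I$ and $I^c$ exchanged and reading off the outer rectangle yields the asserted commutative diagram. I expect the main obstacle to be exactly this assembling step: one must check that the identifications built into \eqref{commutgradV}, \eqref{commutNils} and \eqref{commutTopo} are mutually compatible along the whole chain \eqref{comparaison}, so that the factorisation of the comparison morphism is an equality and not merely an identification up to a further isomorphism; granting this, the induction on $p$ closes and both the isomorphism statement and the order-independence follow.
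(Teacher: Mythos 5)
Your proposal is correct in outline, and its second half is essentially the paper's own proof: the paper argues by induction on $p$, taking as base case $p=1$ the results of \cite[théorème 5.3-2]{MM} and \cite[lemmes 3.4.4 et 3.4.5]{HM1}, and for $p>1$ concatenating the three commutative diagrams \eqref{commutgradV}, \eqref{commutNils} et \eqref{commutTopo}, dont les flèches verticales sont des isomorphismes par les propositions \ref{maisonalg} et \ref{maisontopo} sous l'hypothèse sans pente; this single induction yields at once that \emph{all four} arrows of \eqref{comparaison} are isomorphisms and the order-independence, which is exactly how you handle $\mathbf{Topo}$ and the final diagram (including your closing worry about compatibility of identifications: it is resolved precisely because the vertical arrows in the three squares are the \emph{same} decomposition isomorphisms of propositions \ref{maisonalg} and \ref{maisontopo} throughout the chain, so the factorisation is an equality of morphisms). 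Where you genuinely differ is in proving $\mathbf{Nils}$ and the two central arrows of \eqref{compgrad} directly for arbitrary $p$ by a multivariable Koszul computation, instead of letting the induction dispose of them; this is viable and buys a self-contained description of the cohomology of $i^\dagger\mathcal{M}_{\boldsymbol{\alpha}}$ and $i^\#\mathcal{M}_{\boldsymbol{\alpha}}$ for all $p$, but its cost is that the vanishing in positive degrees is a coregular-sequence statement ($E_p$ surjective, then $E_{p-1}$ surjective on $\ker E_p$ after re-identifying that kernel as a Nilsson-type module with one fewer log direction, etc.), whose iteration rests on the sans pente compatibility (propositions \ref{VNilsson}, \ref{compatVfiltr}, corollaire \ref{cornilp}); you assert this step, yet it is the real content, and it amounts to redoing for general $p$ what the paper's induction extracts from the cited $p=1$ case. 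Two local corrections: first, on $\textup{gr}_{\boldsymbol{\alpha}}(\mathcal{M}[1/\boldsymbol{t}])\otimes e_{\boldsymbol{\alpha},\boldsymbol{\ell}}$, formula \eqref{calcul} gives that $E_i$ acts as $\nu_i+S_i$ and \emph{not} as $(\alpha_i+1)+\nu_i+S_i$ — with your formula $E_i$ would be invertible (nonzero constant plus locally nilpotent) whenever $\alpha_i\neq-1$, the cube would then be acyclic in all degrees and $H^0$ would vanish, contradicting your own conclusion; the rest of your computation is consistent only with the corrected formula. Second, the base case you need for $\mathbf{Topo}$ is the arrow-level statement for $p=1$ contained in \cite[théorème 5.3-2]{MM} (that the last arrow of \eqref{comparaison} is an isomorphism), not the composite isomorphism \eqref{thcomp} itself.
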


\begin{proof}

On raisonne par récurrence sur le nombre $p$ d'hypersurfaces dans $\mathbf{H}$, le cas $p=1$ est traité par Ph. Maisonobe et Z. Mebkhout dans \cite[théorème 5.3-2]{MM}  ou par Morihiko Saito dans \cite[lemmes 3.4.4 et 3.4.5]{HM1}. 

Pour $p>1$, soit $I=\{1,...,r\}\subset\{1,...,p\}$ avec $1<r<p$, on va considérer les diagrammes commutatifs \eqref{commutgradV}, \eqref{commutNils} et \eqref{commutTopo}. L'hypothèse sans pente permet d'appliquer la proposition \ref{maisonalg} (resp. \ref{maisontopo}) qui assure que les flèches verticales des diagrammes \eqref{commutgradV} et \eqref{commutNils} (resp. \eqref{commutTopo}) sont des isomorphismes. La commutativité de ces diagrammes permet de se ramener aux cas de $r$ et $p-r$ hypersurfaces en appliquant successivement les deux isomorphismes de comparaison obtenus par récurrence. La commutativité donne alors également directement la deuxième partie du théorème.  

\end{proof}

Pour un morphisme $\boldsymbol{f}:X\to \mathbb{C}^p$, l'inclusion du graphe de $\boldsymbol{f}$ permet de donner une version générale de ce théorème:

\begin{corollary}\label{thcomparaison2}

Soit $\boldsymbol{f}:X\to \mathbb{C}^p$ un morphisme d'espaces analytiques complexes réduits et $\mathcal{M}$ un $\mathcal{D}_X$-module holonome régulier tel que le couple $(\boldsymbol{H},{i_{\boldsymbol{f}}}_+\mathcal{M})$ soit sans pente. On a un isomorphisme de comparaison 
\[\mathbf{DR}_{X}
\Psi^{\textup{alg}}_{\boldsymbol{f}}\left(\mathcal{M}\right)\simeq
\Psi_{\boldsymbol{f}}\mathbf{DR}_X(\mathcal{M}).\]
De plus si $I=\{1,...,r\}\subset\{1,...,p\}$ et si l'on applique successivement cet isomorphisme de comparaison par rapport aux fonctions $\boldsymbol{f}_I$ et $\boldsymbol{f}_{I^c}$ le résultat ne dépend pas de l'ordre dans lequel on applique l'isomorphisme. Autrement dit le diagramme suivant est commutatif
\[
\xymatrix{\mathbf{DR}_{X}\Psi^{\textup{alg}}_{\boldsymbol{f}_{I^c}}(\Psi^{\textup{alg}}_{\boldsymbol{f}_{I}}\mathcal{M})  \ar@{-}[d]^{\simeq} &
\mathbf{DR}_{X}
\Psi^{\textup{alg}}_{\boldsymbol{f}}\left(\mathcal{M}\right) \ar[l]_-{\sim} \ar[r]^-{\sim} \ar@{-}[d]^{\simeq} &
\mathbf{DR}_{X}\Psi^{\textup{alg}}_{\boldsymbol{f}_I}(\Psi^{\textup{alg}}_{\boldsymbol{f}_{I^c}}\mathcal{M})  \ar@{-}[d]^{\simeq} \\
\Psi_{\boldsymbol{f}_{I^c}}(\Psi_{\boldsymbol{f}_I}\mathbf{DR}_X(\mathcal{M})) &
\Psi_{\boldsymbol{f}}\mathbf{DR}_X(\mathcal{M}) \ar[l]_-{\sim} \ar[r]^-{\sim} & 
\Psi_{\boldsymbol{f}_{I}}(\Psi_{\boldsymbol{f}_{I^c}}\mathbf{DR}_X(\mathcal{M})).
}\]

\end{corollary}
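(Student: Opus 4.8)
Le plan est de déduire cet énoncé du théorème \ref{thcomparaison} par le plongement du graphe $i_{\boldsymbol{f}}:X\to X\times\mathbb{C}^p$, qui vérifie $\boldsymbol{\pi}\circ i_{\boldsymbol{f}}=\boldsymbol{f}$ avec $H_i=\pi_i^{-1}(0)$. L'hypothèse que le couple $(\boldsymbol{H},{i_{\boldsymbol{f}}}_+\mathcal{M})$ est sans pente autorise à appliquer le théorème \ref{thcomparaison} au $\mathcal{D}_{X\times\mathbb{C}^p}$-module ${i_{\boldsymbol{f}}}_+\mathcal{M}$ relativement à $\boldsymbol{\pi}$, dont la fibre centrale $\boldsymbol{\pi}^{-1}(0)$ s'identifie à $X\times 0\simeq X$. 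On obtient ainsi l'isomorphisme
\[\mathbf{DR}_{X\times 0}\,\Psi_{\boldsymbol{H}}({i_{\boldsymbol{f}}}_+\mathcal{M})\simeq\Psi_{\boldsymbol{\pi}}\,\mathbf{DR}_{X\times\mathbb{C}^p}({i_{\boldsymbol{f}}}_+\mathcal{M}),\]
ainsi que, pour $I=\{1,\dots,r\}$, le diagramme commutatif correspondant.

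Je traduirais ensuite chaque membre sur $X$. Du côté algébrique, $\Psi_{\boldsymbol{H}}({i_{\boldsymbol{f}}}_+\mathcal{M})=\Psi^{\text{alg}}_{\boldsymbol{f}}\mathcal{M}$ par définition même des cycles proches algébriques, d'où $\mathbf{DR}_{X\times 0}\Psi_{\boldsymbol{H}}({i_{\boldsymbol{f}}}_+\mathcal{M})\simeq\mathbf{DR}_X\Psi^{\text{alg}}_{\boldsymbol{f}}\mathcal{M}$. Du côté topologique, j'utiliserais deux compatibilités: d'une part la compatibilité du foncteur de de Rham à l'image directe par l'immersion fermée $i_{\boldsymbol{f}}$, à savoir $\mathbf{DR}_{X\times\mathbb{C}^p}({i_{\boldsymbol{f}}}_+\mathcal{M})\simeq {i_{\boldsymbol{f}}}_*\mathbf{DR}_X(\mathcal{M})$ (cf. \cite{kash}); d'autre part la compatibilité classique des cycles proches topologiques à l'image directe propre, $\Psi_{\boldsymbol{\pi}}\circ{i_{\boldsymbol{f}}}_*\simeq {i_0}_*\circ\Psi_{\boldsymbol{\pi}\circ i_{\boldsymbol{f}}}={i_0}_*\circ\Psi_{\boldsymbol{f}}$, où $i_0$ est l'immersion induite sur les fibres centrales. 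En composant et en identifiant $X\times 0$ à $X$, on obtient $\Psi_{\boldsymbol{\pi}}\mathbf{DR}_{X\times\mathbb{C}^p}({i_{\boldsymbol{f}}}_+\mathcal{M})\simeq\Psi_{\boldsymbol{f}}\mathbf{DR}_X(\mathcal{M})$, ce qui livre l'isomorphisme de comparaison cherché.

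Pour la seconde partie, je vérifierais que les identifications précédentes sont naturelles en $\mathcal{M}$ et compatibles au découpage $\boldsymbol{f}=(\boldsymbol{f}_I,\boldsymbol{f}_{I^c})$. Du côté algébrique cela résulte de la proposition \ref{maisonalg}, et du côté topologique de la proposition \ref{maisontopo} jointe à l'égalité $\boldsymbol{\pi}_I\circ i_{\boldsymbol{f}}=\boldsymbol{f}_I$. Le diagramme commutatif fourni par le théorème \ref{thcomparaison} pour ${i_{\boldsymbol{f}}}_+\mathcal{M}$ se transporte alors, par ces identifications, exactement sur le diagramme de l'énoncé.

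La difficulté principale sera d'établir soigneusement la compatibilité des cycles proches topologiques à l'image directe propre dans le cadre de plusieurs variables, et surtout de s'assurer qu'elle est cohérente avec la décomposition en $\boldsymbol{f}_I$ et $\boldsymbol{f}_{I^c}$: c'est ce point — ainsi que le fait que le foncteur $\Psi_{\boldsymbol{f}}$ défini directement coïncide avec $\Psi_{\boldsymbol{\pi}}$ tiré en arrière par $i_{\boldsymbol{f}}$ — qui demande le plus de soin, la compatibilité du de Rham à l'image directe et l'identification algébrique étant respectivement standard et tautologique.
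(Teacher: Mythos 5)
Votre démonstration est correcte et suit essentiellement la même voie que celle de l'article : appliquer le théorème \ref{thcomparaison} à ${i_{\boldsymbol{f}}}_+\mathcal{M}$, identifier le côté algébrique via la définition de $\Psi^{\textup{alg}}_{\boldsymbol{f}}$ (équivalence de Kashiwara), et ramener le côté topologique à $X$ par le changement de base propre pour les cycles proches le long de l'immersion du graphe. La seule différence est de présentation : vous utilisez la compatibilité sous forme d'image directe ($\Psi_{\boldsymbol{\pi}}\circ{i_{\boldsymbol{f}}}_*\simeq {i_0}_*\circ\Psi_{\boldsymbol{f}}$ et $\mathbf{DR}\circ{i_{\boldsymbol{f}}}_+\simeq{i_{\boldsymbol{f}}}_*\circ\mathbf{DR}$) là où l'article applique ${i_{\boldsymbol{f}}}^{-1}$ et invoque $\Psi_{\boldsymbol{f}}{i_{\boldsymbol{f}}}^{-1}\simeq{i_{\boldsymbol{f}}}^{-1}\Psi_{\boldsymbol{\pi}}$, ce qui revient au même.
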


\begin{proof}

On applique le théorème \ref{thcomparaison} à ${i_{\boldsymbol{f}}}_+\mathcal{M}$, on obtient l'isomorphisme 
\[\mathbf{DR}_{X_0}
\Psi_{\mathbf{H}}\left({i_{\boldsymbol{f}}}_+\mathcal{M}\right)\simeq
\Psi_{\pi}\mathbf{DR}_{X\times\mathbb{C}^p}({i_{\boldsymbol{f}}}_+\mathcal{M}).\]
où $\pi:X\times \mathbb{C}^p \to \mathbb{C}^p$ est la projection. On applique le foncteur ${i_{\boldsymbol{f}}}^{-1}$ à cette isomorphisme. On observe qu'un théorème de changement de base propre donne l'isomorphisme de foncteur $\Psi_{\boldsymbol{f}}{i_{\boldsymbol{f}}}^{-1}\simeq {i_{\boldsymbol{f}}}^{-1}\Psi_{{\pi}}$. On en déduit l'isomorphisme
\[{i_{\boldsymbol{f}}}^{-1}\mathbf{DR}_{X_0}
\Psi_{\mathbf{H}}\left({i_{\boldsymbol{f}}}_+\mathcal{M}\right)\simeq
\Psi_{\boldsymbol{f}}{i_{\boldsymbol{f}}}^{-1}\mathbf{DR}_{X\times\mathbb{C}^p}({i_{\boldsymbol{f}}}_+\mathcal{M}).\]
On déduit enfin de l'équivalence de Kashiwara appliquée à l'injection du graphe de $\boldsymbol{f}$ dans $X\times \mathbb{C}^p$ l'isomorphisme attendu
\[\mathbf{DR}_{X}
\Psi^{\textup{alg}}_{\boldsymbol{f}}\left(\mathcal{M}\right)\simeq
\Psi_{\boldsymbol{f}}\mathbf{DR}_X(\mathcal{M}).\]
La suite du corollaire se démontre de la même manière.

\end{proof}

On déduit en particulier de ce corollaire que, dans le cas sans pente, si l'on applique l'isomorphisme de comparaison par rapport aux fonctions $f_1,...,f_p$ l'une après l'autre l'isomorphisme
\[\mathbf{DR}_{X}\left(\Psi^{\textup{alg}}_{f_{\sigma(p)}}\left(...\Psi^{\textup{alg}}_{f_{\sigma(2)}}\left(\Psi^{\textup{alg}}_{f_{\sigma(1)}}\mathcal{M}\right)
\right)\right)
\simeq 
\Psi_{f_{\sigma(p)}}\left(...\Psi_{f_{\sigma(2)}}\left(\Psi_{f_{\sigma(1)}}\mathbf{DR}_{X}(\mathcal{M})\right)\right)\]
ne dépend pas de la permutation $\sigma$ de $\{1,...,p\}$.

\appendix

\section{Hypercomplexes}

On définit ici les $n$-hypercomplexes qui correspondent aux complexes $n^{uple}$ naïfs introduits par P. Deligne au paragraphe 0.4 de \cite{DeligneSGA4}.

\begin{de}
 
Soit $\mathcal{C}$ une catégorie abélienne, on définit par induction la catégorie abélienne des \emph{n-hypercomplexes} de la façon suivante:
\begin{itemize}
 \item Les 1-hypercomplexes sont les complexes d'objets de $\mathcal{C}$.
 \item Les n-hypercomplexes sont les complexes de (n-1)-hypercomplexes.
\end{itemize}
On notera $\boldsymbol{C}^n(\mathcal{C})$ la catégorie abélienne des n-hypercomplexes d'objets de $\mathcal{C}$. Par exemple les 2-hypercomplexes sont les complexes doubles. Un n-hypercomplexe est donc la donnée pour tout ${\boldsymbol{k}}\in \mathbb{Z}^n$ d'un objet 
$X^{\boldsymbol{k}}$ de $\mathcal{C}$ et, pour tout $1\leq i\leq n$ de morphismes $d^{(i){\boldsymbol{k}}}:X^{\boldsymbol{k}}\to X^{{\boldsymbol{k}}+\boldsymbol{1}_i}$ vérifiant les propriétés suivantes:
\[\begin{array}{lllccc}
       d^{(i)}\circ d^{(i)}=0 & \text{pour tout} & i \\
       d^{(i)}\circ d^{(j)}=d^{(j)}\circ d^{(i)} & \text{pour tout} & (i,j)
   \end{array}\]
pour les exposants ${\boldsymbol{k}}$ convenables.
\end{de}

Soit $X$ un $n$-hypercomplexe, pour tout $1\leq i\leq n$ et tout $m\in\mathbb{Z}$ on note $X_i^m$ le $(n-1)$-hypercomplexe composé des $X^{\boldsymbol{k}}$ avec $k_i=m$ et des
différentielles correspondantes. Les différentielles $d^{(i){\boldsymbol{k}}}$ avec $k_i=m$ définissent un morphisme:
\[d_i^m:X_i^m\to X_i^{m+1}\]
qui vérifie $d_i^{m+1}\circ d_i^m=0$ par définition d'un $n$-hypercomplexe. On a donc pour tout $1\leq i\leq n$ un foncteur:
\[\begin{array}{lllccc}
   F_i: & \boldsymbol{C}^n(\mathcal{C}) & \to & \boldsymbol{C}(\boldsymbol{C}^{(n-1)}(\mathcal{C}))\\
        &  X & \mapsto & \{X_i^m,d_i^m\}_{m\in\mathbb{Z}}
  \end{array}
\]
de la catégorie des $n$-hypercomplexes dans la catégorie des complexes de $(n-1)$-hypercomplexes. On introduit alors le $(n-1)$-hypercomplexe:
\[H_i^p(X):=H^p(F_i(X)),\]
et le $n$-hypercomplexe:
\[H_i(X):=...\to H_i^p(X) \xrightarrow{0} H_i^{p+1}(X) \to ...\]
où toutes les flèches horizontales sont nulles.

\begin{de}\label{simple}
  Si un $n$-hypercomplexe $X$ vérifie la propriété de finitude suivante:
  \begin{equation}\label{finitude}
   \text{pour tout}~ m\in\mathbb{Z}~ \text{l'ensemble} 
   ~\{(k_1,...,k_n)\in\mathbb{Z}^n\mid k_1+...+k_n=m,X^{{\boldsymbol{k}}}\neq0\}~\text{est fini},
  \end{equation}
alors on peut associer à $X$ un complexe simple $s(X)$. On pose 
\[s(X)^m:=\bigoplus_{k_1+...+k_n=m}X^{\boldsymbol{k}}.\]
   Soit ${\boldsymbol{k}}\in\mathbb{Z}^n$ tel que $k_1+...+k_n=m$. On note $i_{{\boldsymbol{k}}}:X^{{\boldsymbol{k}}}\to s(X)^m$ et $p_{{\boldsymbol{k}}}:s(X)^m\to X^{\boldsymbol{k}}$ les morphismes naturels. On peut alors 
   définir la
   différentielle $d^m_{s(X)}:s(X)^m\to s(X)^{m+1}$ du complexe $s(X)$ par:
   
   \[p_{\boldsymbol{l}}\circ d^m_{s(X)}\circ i_{{\boldsymbol{k}}}=\left\{
   \begin{array}{ccclll}
    (-1)^{k_1+...+k_{j-1}}d^{(j){\boldsymbol{k}}} & \text{si} & \#\{i\mid k_i\neq l_i\}=1 & \text{où $j$ vérifie $k_j\neq l_j$}\\
    0 & \text{sinon}
   \end{array}\right.   
   \]
pour tout ${\boldsymbol{k}}$ et $\boldsymbol{l}$ vérifiant $k_1+...+k_n=m$ et $l_1+...+l_n=m+1$. On peut alors vérifier que $d^{m+1}_{s(X)}\circ d^m_{s(X)}=0$ et $(s(X),d_{s(X)})$ est 
donc bien un complexe. On a défini un foncteur 
\[\begin{array}{lllccc}
   s: & \boldsymbol{C}_f^n(\mathcal{C}) & \to & \boldsymbol{C}(\mathcal{C})\\
        &  X & \mapsto & (s(X),d_{s(X)})
  \end{array}
\]
où $\boldsymbol{C}_f^n(\mathcal{C})$ est la catégorie des $n$-hypercomplexes vérifiant la propriété \eqref{finitude}. De plus on observe facilement que $s(.)$ est un foncteur exact.
   \end{de}

\begin{theorem}\label{quasihyp}
 
 Soit $f:X\to Y$ un morphisme de $n$-hypercomplexes où X et Y vérifient la propriété \eqref{finitude} et supposons que $f$ induise un isomorphisme:
 \[f:H_1(H_2(...H_n(X)...))\simeq H_1(H_2(...H_n(Y)...)).\]
Alors $s(f):s(X)\to s(Y)$ est un quasi-isomorphisme.
 
\end{theorem}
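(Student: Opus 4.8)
Je démontrerais le théorème \ref{quasihyp} par récurrence sur l'entier $n$. Pour $n=1$, un $1$-hypercomplexe est un complexe ordinaire, le foncteur $s$ est l'identité et $H_1(X)$ n'est autre que le complexe de cohomologie de $X$ muni de différentielles nulles; l'hypothèse que $f$ induise un isomorphisme $H_1(X)\simeq H_1(Y)$ signifie donc exactement que $f$ est un quasi-isomorphisme, c'est-à-dire la conclusion. Pour l'hérédité, je traiterais séparément la dernière direction: via le foncteur $F_n$, on voit $X$ comme un complexe $\{X_n^m,d_n^m\}_m$ de $(n-1)$-hypercomplexes, auxquels on applique le foncteur simple $s_{(n-1)}$ relatif aux $n-1$ premières directions. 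On obtient ainsi un complexe double $\widehat{X}$ dont la colonne d'indice $m$ est $s_{(n-1)}(X_n^m)$, et l'associativité de la construction du complexe simple (avec le jeu de signes adéquat) fournit l'identification $s(X)=\mathrm{Tot}(\widehat{X})$ avec le complexe total.

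J'étudierais alors la suite spectrale de ce complexe double obtenue en prenant d'abord la cohomologie dans la direction $m$. Le point central est l'exactitude du foncteur $s_{(n-1)}$ signalée après la définition \ref{simple}: puisque $s_{(n-1)}$ est exact, il commute à la cohomologie, de sorte que la page $E_1$ de cette suite spectrale s'identifie, en degré $m$ dans la direction restante, au complexe $s_{(n-1)}(H_n^m(X))$, et sa page $E_2$ aux objets $H^a\big(s_{(n-1)}(H_n^m(X))\big)$. D'autre part, comme dans $H_n(X)$ toutes les différentielles de la direction $n$ sont nulles, la cohomologie itérée se décompose en $H_1\cdots H_{n-1}H_n(X)=\bigoplus_m H_1\cdots H_{n-1}(H_n^m(X))$; l'hypothèse sur $f$ équivaut donc à ce que, pour chaque $m$, le morphisme de $(n-1)$-hypercomplexes $H_n^m(f):H_n^m(X)\to H_n^m(Y)$ induise un isomorphisme sur la cohomologie itérée.

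C'est ici que j'appliquerais l'hypothèse de récurrence à $H_n^m(f)$, en vérifiant au passage que $H_n^m(X)$, sous-quotient de $X_n^m$, satisfait encore à la condition de finitude \eqref{finitude}: elle assure que $s_{(n-1)}(H_n^m(f))$ est un quasi-isomorphisme, donc que $f$ induit un isomorphisme sur la page $E_2$ de la suite spectrale. Il resterait à conclure par le théorème de comparaison des suites spectrales: la condition \eqref{finitude} entraîne que chaque antidiagonale de $\widehat{X}$ n'a qu'un nombre fini de termes non nuls, d'où une filtration finie en chaque degré total, donc la convergence et la régularité nécessaires pour qu'un isomorphisme sur $E_2$ fournisse un isomorphisme sur l'aboutissement $H^\bullet(s(X))$; ainsi $s(f)$ est un quasi-isomorphisme.

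Le principal obstacle me semble double: d'une part la mise en place soigneuse de l'identification $s(X)=\mathrm{Tot}(\widehat{X})$ et du suivi des signes, d'autre part l'articulation précise entre l'hypothèse, formulée en termes de cohomologie itérée $H_1\cdots H_n$, et la page $E_2$ de la suite spectrale, articulation qui repose entièrement sur l'exactitude de $s_{(n-1)}$ et sur l'annulation des différentielles de $H_n(X)$ dans la direction $n$. La question de convergence, bien que technique, est directement contrôlée par \eqref{finitude}.
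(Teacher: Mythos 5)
Votre démonstration est correcte et repose sur le même squelette que celle de l'article : récurrence sur $n$, découpage de $X$ en tranches $X_n^m$ dans la dernière direction, identification de $s(X)$ avec le complexe total du complexe double $\hat{X}$ de colonnes $s_{(n-1)}(X_n^m)$, commutation de $s_{(n-1)}$ à la cohomologie grâce à son exactitude, et application de l'hypothèse de récurrence aux morphismes de tranches $H_n^m(f)$ — la réduction de l'hypothèse globale sur $H_1\cdots H_n$ aux tranches, de même que la vérification de \eqref{finitude} pour $H_n^m(X)$, figurent aussi (parfois implicitement) dans l'article. La seule divergence réelle est l'ingrédient de conclusion : l'article invoque le cas $n=2$ comme second cas de base de la récurrence, en le citant de \cite{kash} (théorème 1.9.3), tandis que vous concluez par le théorème de comparaison des suites spectrales du complexe double $\hat{X}$, la condition \eqref{finitude} assurant la finitude de chaque antidiagonale, donc une filtration finie en chaque degré total et la convergence forte requise. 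Votre variante est ainsi auto-contenue — elle redémontre au passage le résultat cité pour les complexes doubles, et la récurrence ne requiert que le cas trivial $n=1$ comme point de départ — au prix de la machinerie de convergence des suites spectrales, dont la justification soigneuse (filtration finie degré par degré, régularité) est précisément le point que l'article évite en déléguant tout le mécanisme de comparaison à la référence ; la version de l'article est de ce fait plus courte, mais dépendante de ce résultat extérieur et moins explicite sur ce qui fait marcher la comparaison.
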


\begin{proof}
 
 On raisonne par récurrence sur l'entier $n$. Pour $n=1$ c'est la définition d'un quasi-isomorphisme, pour $n=2$ c'est le théorème 1.9.3 de \cite{kash}. On suppose que $n\geq 3$.
 Pour tout $p\in \mathbb{Z}$, on a deux $(n-1)$-hypercomplexes, $H_n^p(X)$ et $H_n^p(Y)$, qui vérifient les hypothèses du théorème et donc par hypothèse de récurrence 
 $f$ induit un quasi-isomorphisme entre $s\left(H_n^p(X)\right)$ et $s\left(H_n^p(Y)\right)$. Or $H_n^p(X)=H^p(F_n(X))$ et $H^p(.)$ est un foncteur additif, il commute donc avec le foncteur
 $s(.)$ et $f$ induit un quasi-isomorphisme entre $H^p(\{s(X_n^m),s(d_n^m)\}_{m\in\mathbb{Z}})$ et $H^p(\{s(Y_n^m),s(d_n^m)\}_{m\in\mathbb{Z}})$ pour tout $p\in\mathbb{Z}$. Mais ce 
 quasi-isomorphisme correspond aux conditions du théorème pour les complexes doubles $\{s(X_i^m),s(d_i^m)\}_{m\in\mathbb{Z}}$ et $\{s(Y_i^m),s(d_i^m)\}_{m\in\mathbb{Z}}$,
 les complexes simples associés à ces deux complexes doubles sont donc quasi-isomorphes par hypothèse de récurrence pour $n=2$. En appliquant la définition du foncteur $s$ on montre alors que
 ces deux derniers complexes simples sont en fait les complexes simples associés à $X$ et à $Y$ ce qui conclut la démonstration du théorème.

\end{proof}

\begin{corollary}\label{quasinul}

Soit $X$ un $n$-hypercomplexe tel qu'il existe un indice $i$ pour lequel le complexe $F_i(X)$ soit exact, alors $s(X)$ est quasi-isomorphe au complexe nul.

\end{corollary}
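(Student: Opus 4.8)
The plan is to deduce the corollary from Theorem~\ref{quasihyp} applied to the canonical (zero) morphism $f\colon X\to 0$, where $0$ denotes the zero $n$-hypercomplex. Both objects lie in $\boldsymbol{C}_f^n(\mathcal{C})$, the zero hypercomplex satisfying the finiteness condition~\eqref{finitude} trivially, and $s(0)$ is the zero complex. The hypothesis of Theorem~\ref{quasihyp} then reduces, since the target vanishes, to the single assertion
\[H_1(H_2(\cdots H_n(X)\cdots))=0;\]
once this holds, $s(f)\colon s(X)\to s(0)=0$ is a quasi-isomorphism, which is exactly the claim that $s(X)$ is quasi-isomorphe au complexe nul.

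The exactness of $F_i(X)$ immediately gives $H_i^p(X)=H^p(F_i(X))=0$ for all $p$, hence $H_i(X)=0$. The difficulty is that the iterated cohomology in Theorem~\ref{quasihyp} is taken in the \emph{fixed} order $H_1\circ\cdots\circ H_n$, and cohomology functors in different directions do not commute: when $i\neq n$ the functors $H_j$ with $j>i$ are applied before $H_i$ and need not preserve the exactness of $F_i$, so the vanishing of $H_i(X)$ does not by itself force the iterated cohomology to vanish. I would therefore first reduce to $i=n$. Choosing a permutation $\rho$ of $\{1,\dots,n\}$ with $\rho(i)=n$ and relabelling the directions of $X$ accordingly produces an $n$-hypercomplex $X^\rho$ for which $F_n(X^\rho)$ is identified with $F_i(X)$, hence is exact, and which still satisfies~\eqref{finitude} because the total degree $k_1+\cdots+k_n$ is permutation-invariant.

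Relabelling leaves the total complex unchanged up to isomorphism: $s(X^\rho)$ and $s(X)$ have the same term $\bigoplus_{k_1+\cdots+k_n=m}X^{\boldsymbol{k}}$ in total degree $m$, and the only effect of $\rho$ on the differential of Définition~\ref{simple} is a reshuffling of the Koszul signs, absorbed by the standard rescaling of each summand by $\pm 1$; thus $s(X)$ is acyclic if and only if $s(X^\rho)$ is, and I may assume $i=n$ from the outset. Then $H_n(X)=0$, and since every $H_j$ is an additive functor sending the zero hypercomplex to the zero hypercomplex, $H_1(\cdots H_n(X)\cdots)=H_1(\cdots H_{n-1}(0)\cdots)=0$, so Theorem~\ref{quasihyp} applies and finishes the argument. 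The main obstacle is exactly this order-sensitivity: the real content of the proof is the permutation-invariance (up to quasi-isomorphism) of $s(\cdot)$, which is what allows the exact direction $F_i$ to be moved into the innermost slot occupied by $H_n$.
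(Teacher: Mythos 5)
Your proposal is correct and follows essentially the same route as the paper: both proofs apply Theorem~\ref{quasihyp} to the comparison of $X$ with the zero hypercomplexe, after a permutation argument that allows the exact direction $i$ to be taken innermost. The paper phrases this by asserting that the theorem \emph{est évidemment vérifié si l'on permute les indices des} $H_i$ and then applies $H_i$ first, whereas you relabel the directions of $X$ and invoke the sign-rescaling isomorphism $s(X^{\rho})\simeq s(X)$, which is precisely the justification underlying the paper's \emph{évidemment}.
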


\begin{proof}

Le théorème précédent est évidemment vérifié si l'on permute les indices des $H_i$. Si le complexe $F_i(X)$ est exact alors $H_i(X)\simeq H_i(0_n)$ où $0_n$ est le $n$-hypercomplexe nul. 
On a donc
 \[H_1(...H_{i-1}(H_{i+1}(...H_n(H_i(X))...)\simeq H_1(...H_{i-1}(H_{i+1}(...H_n(H_i(0_n))...)\]
et on peut appliquer le théorème précédent, $s(X)\simeq s(0_n)$, $s(X)$ est quasi-isomorphe au complexe nul.

\end{proof}

\begin{de}\label{cube}

Soit $\{X^{{\boldsymbol{k}}},f^{(i){\boldsymbol{k}}}\}_{{\boldsymbol{k}}\in\mathbb{Z}^n, 1\leq i\leq n}$ une famille d'objets de $\mathcal{C}$ et de morphismes $f^{(i){\boldsymbol{k}}}:X^{{\boldsymbol{k}}}\to X^{\boldsymbol{k+1}_i}$,
  on appelle \emph{hypercube associé à $X$} le $n$-hypercomplexe noté $\textrm{Cube}(X)^\bullet$ vérifiant 
\[
\textup{Cube}(X)^{k_1,...,k_n} =
\left\{\begin{array}{lllccc} X^{k_1-1,...,k_n-1}  & \text{si} & {\boldsymbol{k}}\in\{0,1\}^n\\
   0 & \text{sinon} 
\end{array}\right.\] 
les morphismes étant ceux donnés par les $f^{(i){\boldsymbol{k}}}$. On vérifie facilement que $\textup{Cube}(.)$ définit un foncteur exact.
\end{de}

Par exemple, pour $n=3$ on a 
\[\xymatrix{
 && X^{-1,0,0} \ar[rr] && X^{0,0,0}\\
&X^{-1,-1,0} \ar[ru] \ar[rr] && X^{0,-1,0} \ar[ru] \\
\textup{Cube}(X)= &&&&\\
&& X^{-1,0,-1}  \ar[uuu] \ar[rr] &&  X^{0,0,-1} \ar[uuu] \\
&X^{-1,-1,-1} \ar[rr] \ar[ru] \ar[uuu] && X^{0,-1,-1} \ar[ru] \ar[uuu]
}
\]

où le reste de l'hypercomplexe est nul et $X^{-1,-1,-1}$ est en degré $(0,0,0)$.

\section{Filtrations compatibles}\label{appB}

Les définitions qui suivent ont été introduites par Morihiko Saito dans \cite{HM1}

\begin{de}

Soit $A$ un objet de la catégorie abélienne $\mathcal{C}$ et $A_1,...,A_n\subseteq A$ des sous-objets de $A$. On dit que $A_1,...,A_n$ sont des \emph{sous-objets compatibles} de $A$ si il existe un $n$-hypercomplexe $X$ satisfaisant à:
\begin{enumerate}
\item $X^{\boldsymbol{k}}=0$ si $\boldsymbol{k}\not\in\{-1,0,1\}^n$.
\item $X^\mathbf{0}=A$.
\item $X^{\mathbf{0}-\mathbf{1}_i}=A_i$ pour $1\leq i\leq n$.
\item Pour tout $1\leq i\leq n$ et tout $\boldsymbol{k}\in\{-1,0,1\}^n$ tel que $k_i=0$, la suite
\[0\to X^{\boldsymbol{k-1}_i}\to X^{\boldsymbol{k}}\to X^{\boldsymbol{k+1}_i}\to 0\]
est une suite exacte courte.
\end{enumerate}

\end{de}

\begin{Rem}\label{remcompatibilité}
\begin{itemize}
\item En utilisant les propriétés universelles fournies par les suites exactes courtes on observe que si les sous-objets $A_1,...,A_n$ sont compatibles, alors le $n$-hypercomplexe $X$ est déterminé de manière unique. Par exemple si $\boldsymbol{k}\in \{-1,0\}^n$ et si $I=\{i;k_i=-1\}\subset\{1,...,n\}$ alors \[X^{\boldsymbol{k}}=\bigcap_{i\in I}A_i.\]
\item Si $n=1$, le complexe $X$ est la suite exacte courte
\[0\to A_1\to A\to A/A_1\to 0.\]
\item Si $n=2$ deux sous-objets $A_1$ et $A_2$ sont toujours compatibles et $X$ est le complexe double suivant
\[\xymatrix{A_1/(A_1\cap A_2) \ar[r]  & A/A_2 \ar[r] & A/(A_1+A_2) \\
A_1 \ar[r]\ar[u] & A \ar[r]\ar[u] & A/A_1\ar[u]\\
A_1\cap A_2 \ar[r]\ar[u] & A_2 \ar[r]\ar[u] & A_2/(A_1\cap A_2).\ar[u]
}\]
\item Si $n\geq 3$ des sous-objets $A_1,...,A_n$ ne sont pas compatibles en général.
\item Par définition si $A_1,...,A_n\subseteq A$ sont compatibles alors pour tout $I\subset\{1,...,n\}$ les sous-objets $(A_i)_{i\in I}\subseteq A$ sont compatibles et l'hypercomplexe correspondant est le $\#I$-hypercomplexe $X_I$ dont les objets sont les $X^{\boldsymbol{k}}$ tels que $k_i=0$ pour tout $i\in I^c$.
\end{itemize}
\end{Rem}

\begin{de}\label{Fcomp}

Soient $F_{\bullet}^1,...,F_\bullet^n$ des filtrations croissantes indexées par $\mathbb{Z}$ d'un objet $A$, on dit que ces filtrations sont \emph{compatibles} si pour tout $\boldsymbol{\ell}\in\mathbb{Z}^n$ les sous-objets $F_{\ell_1}^1,...,F_{\ell_n}^n$ de $A$ sont compatibles.

\end{de}

\begin{Rem}

\begin{itemize}

\item D'après la remarque précédente toute sous famille d'une famille de filtrations compatibles est compatible.
\item On peut montrer que si $F_{\bullet}^1,...,F_\bullet^n$ sont compatibles alors pour tout $\ell\in\mathbb{Z}$ les filtrations induites par $F_\bullet^1,...,F_\bullet^{n-1}$ sur $\textup{gr}_\ell^{F_n}$ sont compatibles.
\item Si $F_{\bullet}^1,...,F_\bullet^n$ sont compatibles alors les filtrations induites sur $F_{\ell_1}^1\cap...\cap F_{\ell_n}^n$ sont compatibles.
\end{itemize}

\end{Rem}

La proposition suivante correspond à \cite[corollaire 1.2.13]{HM1}

\begin{proposition}\label{commutmultgrad}

Soit $F_{\bullet}^1,...,F_\bullet^n$ des filtrations compatibles d'un objet $A$. L'objet obtenu en appliquant successivement les gradués $\textup{gr}_{\ell_{\sigma(j)}}^{F_{\sigma(j)}}$ 
par rapport aux filtrations $F_{\sigma(j)}$ induites sur $\textup{gr}_{\ell_{\sigma(j-1)}}^{F_{\sigma(j-1)}}...\textup{gr}_{\ell_{\sigma(1)}}^{F_{\sigma(1)}}A$ pour $1\leq j\leq n$ ne dépend pas de la permutation $\sigma$ de $\{1,...,n\}$ et est égal à
\[\frac{F_{\ell_1}^1A\cap...\cap F_{\ell_n}^nA}{\sum_j F_{\ell_1}^1A\cap...\cap F_{\ell_{j}-1}^1A\cap...\cap F_{\ell_n}^nA}.\]

\end{proposition}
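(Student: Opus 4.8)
The plan is to prove the explicit formula
\[
\textup{gr}_{\ell_n}^{F_n}\cdots\textup{gr}_{\ell_1}^{F_1}A \;\simeq\; \frac{F_{\ell_1}^1A\cap\cdots\cap F_{\ell_n}^nA}{\sum_{j}\big(\bigcap_{i\neq j}F_{\ell_i}^iA\big)\cap F_{\ell_j-1}^jA}
\]
for the \emph{standard} order, and to deduce the general statement by symmetry. Indeed the right-hand side is manifestly invariant under any permutation of the filtrations (it is built from intersections and a symmetric sum), while compatibility is itself a symmetric condition. Thus, once the displayed isomorphism is known for every compatible family in the standard order, applying it to the relabelled family $G^k:=F^{\sigma(k)}$, $m_k:=\ell_{\sigma(k)}$, shows that the iterated graded taken in the order $\sigma$ equals the same symmetric quotient; independence of $\sigma$ follows at once.

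First I would prove the formula by induction on $n$, the case $n=1$ being the definition of $\textup{gr}$. For the inductive step I peel off the last functor. The subfamily $F^1,\dots,F^{n-1}$ is compatible (remark after Définition \ref{Fcomp}), so by induction $C:=\textup{gr}_{\ell_{n-1}}^{F_{n-1}}\cdots\textup{gr}_{\ell_1}^{F_1}A$ equals $N/D$ with $N:=\bigcap_{i<n}F_{\ell_i}^iA$ and $D:=\sum_{j<n}\big(\bigcap_{i<n,\,i\neq j}F_{\ell_i}^iA\big)\cap F_{\ell_j-1}^jA$. Compatibility also guarantees that $F^n$ induces a filtration on $C$ (the filtrations induced on the successive gradeds are again compatible), namely $F^n_mC=\big((F^n_mA\cap N)+D\big)/D$. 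Applying the second isomorphism theorem and the modular law (legitimate since $F^n_{\ell_n-1}A\cap N\subseteq F^n_{\ell_n}A\cap N$) I rewrite
\[
\textup{gr}_{\ell_n}^{F_n}C \;\simeq\; \frac{F^n_{\ell_n}A\cap N}{\big(F^n_{\ell_n-1}A\cap N\big)+\big(F^n_{\ell_n}A\cap D\big)}.
\]
Here $F^n_{\ell_n}A\cap N=\bigcap_{i}F_{\ell_i}^iA$ is the desired numerator, and $F^n_{\ell_n-1}A\cap N$ is exactly the $j=n$ term of the symmetric sum.

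It remains to identify $F^n_{\ell_n}A\cap D$ with the remaining terms $\sum_{j<n}\big(\bigcap_{i\neq j}F_{\ell_i}^iA\big)\cap F_{\ell_j-1}^jA$. Since each individual term $\big(\bigcap_iF_{\ell_i}^iA\big)\cap D_j$ (with $D_j:=\bigcap_{i<n,\,i\neq j}F_{\ell_i}^iA\cap F_{\ell_j-1}^jA$) already equals the $j$-th symmetric term, what I really need is the distributivity
\[
F^n_{\ell_n}A\cap\Big(\textstyle\sum_{j<n}D_j\Big)=\textstyle\sum_{j<n}\big(F^n_{\ell_n}A\cap D_j\big).
\]
This is the heart of the matter and the only place where compatibility for $n\geq 3$ is used: for arbitrary subobjects the lattice they generate is merely modular, so the inclusion $\supseteq$ is automatic while $\subseteq$ can fail. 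Compatibility is precisely the hypothesis making the sublattice generated by all the $F^i_{\ell}A$ distributive; concretely, the exact sequences $0\to X^{\boldsymbol{k}-\boldsymbol{1}_n}\to X^{\boldsymbol{k}}\to X^{\boldsymbol{k}+\boldsymbol{1}_n}\to0$ of the defining hypercomplex (Définition \ref{Fcomp} and remark \ref{remcompatibilité}) say that intersecting with $F^n_{\ell_n}A$ is an \emph{exact} operation along the $F^n$-direction over the sub-hypercomplex indexed by $\{1,\dots,n-1\}$, and exactness of this otherwise only left-exact functor is exactly the displayed distributivity. Substituting it back yields the symmetric denominator $\sum_{j=1}^n\big(\bigcap_{i\neq j}F^i_{\ell_i}A\big)\cap F^j_{\ell_j-1}A$ and closes the induction.

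I expect this distributivity step to be the genuine obstacle; everything else is the second isomorphism theorem and the modular law. An alternative, more symmetric route avoids it by realizing the iterated graded as the degree-zero cohomology of the total complex $s(\textup{Cube}(\cdot))$ (appendices \ref{simple}, \ref{cube}) of the $n$-cube with vertices $\bigcap_i F^i_{\ell_i-\delta_i}A$ for $\boldsymbol{\delta}\in\{0,1\}^n$: compatibility makes all the relevant rows of this cube exact, so by Théorème \ref{quasihyp} the total complex is concentrated in degree zero with value the displayed quotient, and the order-independence of the iterated graded becomes the intrinsic order-independence of the total complex.
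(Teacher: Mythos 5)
First, a point of reference: the paper gives no proof of this proposition at all — it is quoted directly from M.~Saito (\og La proposition suivante correspond à \cite[corollaire 1.2.13]{HM1}\fg). So your attempt must be judged as a standalone proof, and as such it has a genuine gap. The parts you do carry out are correct: the symmetry reduction, the inductive set-up, the second isomorphism theorem and the modular law are all sound, and you have correctly located the crux in the distributivity
\[
F^n_{\ell_n}A\cap\Bigl(\sum_{j<n}D_j\Bigr)=\sum_{j<n}\bigl(F^n_{\ell_n}A\cap D_j\bigr).
\]
But this is precisely what is not proven, and neither of your two sketched justifications works as stated. The hypercomplex in the definition of compatibility is attached to a \emph{single} multi-index, whereas the objects $D_j=\bigl(\bigcap_{i<n,\,i\neq j}F^i_{\ell_i}A\bigr)\cap F^j_{\ell_j-1}A$ sit at the shifted multi-indices $(\ell_1,\dots,\ell_j-1,\dots,\ell_{n-1})$: the short exact sequences of one hypercube say nothing about sums of vertices coming from \emph{different} hypercubes, so \og intersecting with $F^n_{\ell_n}A$ is exact along the $F^n$-direction\fg{} is not a consequence of Définition \ref{Fcomp} but a restatement of what must be proven. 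The alternative route fails for a similar reason: the rows of the cube with vertices $\bigcap_iF^i_{\ell_i-\delta_i}A$ are two-term complexes of monomorphisms, hence never exact (their cokernels are the partial gradeds), so Théorème \ref{quasihyp} and Corollaire \ref{quasinul} do not apply to that cube directly — and its total complex is concentrated in top degree, not degree zero. What is needed in either route is the theorem that compatibility forces the lattice generated by \emph{all} the subobjects $F^i_mA$ (all $i$ \emph{and} all levels $m$) to be distributive, equivalently that intersections commute with passage to the successive subquotients; this is exactly the content of the chain of lemmas in Saito's \S1.2 leading to corollaire 1.2.13, and it is real work, not a formal consequence of the exact sequences defining compatibility.

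There is a second, related gap in the inductive step itself: your induction hypothesis identifies only the \emph{object} $C\simeq N/D$, but the step also uses an identification of the \emph{filtration} induced by $F^n$ on $C$ — which by definition is obtained by pushing $F^n$ through the successive gradeds one at a time — with the filtration $\bigl((F^n_mA\cap N)+D\bigr)/D$ induced directly on the subquotient $N/D$. That \og image of intersections equals intersection of images\fg{} statement is of exactly the same nature as the missing distributivity (and is false for arbitrary subobjects when $n\geq 3$), so it cannot be assumed silently: it has to be incorporated into a strengthened induction hypothesis and proven simultaneously with the formula, using the same key lemma. In short, your proposal is a correct and well-organized \emph{reduction} of the proposition to the fundamental lemma of Saito's theory of compatible filtrations, but that lemma — the actual mathematical content of the statement — is left unproven.
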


\bibliographystyle{smfalpha}
\bibliography{biblio}

\end{document}